\newtheorem{df}{Definition}[section]
\newtheorem{thm}[df]{Theorem}
\newtheorem{prop}[df]{Proposition}
\newtheorem{lemm}[df]{Lemma}
\newtheorem{cor}[df]{Corollary}
\newtheorem{rem}[df]{Remark}
\newcommand{\id}{\mathrm{id}}
\newcommand{\Q}{\mathbb{Q}}
\newcommand{\Z}{\mathbb{Z}}
\newcommand{\shuugou}[1]{\{ #1 \}}
\newcommand{\zettaiti}[1]{\lvert #1 \rvert}
\newcommand{\im}{\mathrm{im}}
\newcommand{\gyaku}[1]{ #1^{-1}}
\newcommand{\skein}[1]{\mathcal{S}( #1 )}
\newcommand{\kukakko}[1]{\langle #1 \rangle}
\newcommand{\defeq}{\stackrel{\mathrm{def.}}{=}}
\newcommand{\Aut}{\mathrm{Aut}}
\newcommand{\bch}{\mathrm{bch}}
\newcommand{\filtn}[1]{\{ #1 \}_{n \geq 0}}
\newcommand{\comp}[1]{\underleftarrow{\lim}_{#1 \rightarrow \infty}}
\newcommand{\gauss}[1]{\lfloor #1 \rfloor}
\newcommand{\ad}{\mathrm{ad}}
\newcommand{\arccosh }{\mathrm{arccosh}}
\begin{document}

\title[The quotients of Kauffman bracket skein algebras ]
{The quotient of a Kauffman bracket skein algebra by
the square of an augmentation ideal}
\author{Shunsuke Tsuji}
\address{Graduate School of Mthematicla Sciences, The University of Tokyo, 
3-8-1 Komaba, Meguro-ku, Tokyo 153-8941, Japan}
\subjclass[2010]{57N05, 57M27}
\email{tsujish@ms.u-tokyo.ac.jp}
\keywords{Mapping class group, Kauffman bracket, braid group, Skein algebra}
\date{}
\maketitle

\begin{abstract}
We give an explicit basis $\mathcal{B}$ of
the quotient of the Kauffman bracket  skein algebra $\mathcal{S} (\Sigma)$
on a surface $\Sigma$ by the square of an augmentation ideal.
As an application, it induces two kinds of finite type invariants 
of links in a handle body
in the sense of  Le \cite{Le1996}. Moreover, we construct an 
embedding of the mapping class group of a compact connected surface
of genus $0$ into the Kauffman bracket skein algebra on the surface
completed with respect to a filtration coming from
the augmentation ideal.
\end{abstract}

\section{Introduction}
The discovery of the Jones polynomial made a remarkable
development in knot theory. L. Kauffman gave an effective
method to compute the Jones polynomial by the Kauffman bracket,
which is an invariant of a framed unoriented link 
assigning a Laurent polynomial in a variable $A$.
The Kauffman bracket is computed by some skein relation.
The Kauffman bracket module in a handle body defined
by this skein relation is useful in the study of knots and links in
the handle body.
For example, see  Lickorish \cite{skeins_and_handlebodies},
Prztycki \cite{skeinmodule} and our paper \cite{TsujiCSAI}
section 5. Furthermore, recently, we find a new relationship
between the study of the mapping class group of a surface
and the Kauffman bracket skein modules in the product of 
the surface and the closed interval $[0,1]$.
In particular, we obtain a formula  
of the action of an Dehn twist $t_c$
on the completed skein module $\widehat{\skein{\Sigma,J}}$
in terms of the inverse
function of hyperbolic cosine function

\begin{equation}
\label{equation_skein_Dehn}
\exp(\sigma(\frac{-A+\gyaku{A}}{4\log(-A)} (\arccosh (-\frac{c}{2}))^2))(\cdot)
=t_c (\cdot): \widehat{\skein{\Sigma,J}} \to \widehat{\skein{\Sigma,J}}.
\end{equation}
\cite{TsujiCSAI} 
Theorem 4.5.
This formula is an analogy of a formula for the action of
$t_c$ on the completed group ring of the fundamental group of the surface
\cite{Kawazumi} \cite{KK} \cite{MT}.

To describe the relationship more precisely,
we need to clarify the Kauffman bracket skein module.
In this paper, we construct an embedding of the mapping class group
of a compact connected oriented 
surface of genus $0$ into the completed Kauffman bracket skein algebra
on the surface in Theorem \ref{thm_zeta_pure_braid}.
In our succeeding paper \cite{TsujiTorelli},
 we also construct an embedding of the Torelli group
of a compact connected oriented surface with non-empty connected boundary
into the completed Kauffman bracket skein algebra on the surface.

In \cite{TsujiCSAI}, we introduce a filtration of the skein module
on a surface
and its completion,
in order to define the logarithm of a Dehn  twist,
 the exponential of the action of the skein algebra on
the surface and the formula (\ref{equation_skein_Dehn}).
In this paper, we introduce another filtration of 
the skein algebra in order to
consider the logarithm of
other elements of the mapping class group
of  the surface.
In particular, using this new filtration,
we can define the logarithm of any element of 
the mapping class group of a surface of genus $0$.
See Theorem \ref{thm_zeta_pure_braid}.
In our subsequent papers, we consider the logarithm of
any element of the Torelli group of a surface
with non-empty connected boundary.
In order to define the filtration,
we give an explicit basis of the quotient 
of the Kauffman bracket skein algebra 
by the square of an augmentation ideal.

Let $\Sigma$ be a compact connected oriented surface, $I$
the closed interval $[0,1]$, $\skein{\Sigma}$
the Kauffman bracket skein algebra on $\Sigma$ and
$\epsilon$ the augmentation map defined by 
$\epsilon (A+1) =0 $ and $\epsilon ([L]-(-2)^{\zettaiti{L}}) =0$.
Here $\zettaiti{L}$ is the number of the components of $L$.
For a link $L$ in $\Sigma \times I$, the element
$(-2)^{-\zettaiti{L}}[L] \in \skein{\Sigma} / (\ker \epsilon)^n$ is a
finite invariant of  order  $n$ in the sense of Le \cite{Le1996} (3.2).
By \cite{TsujiCSAI} Lemma 5.3, we actually have
\begin{equation*}
\sum_{L' \subset L} (-1)^{\zettaiti{L'}}(-2)^{-\zettaiti{L'}}[L'] \in (\ker \epsilon)^n
\end{equation*}
 for a link $L$ having components more than $n$,
where the sum is over all sublinks $L' \subset L$
including the empty link.

Now we assume $\Sigma$ has a non-empty boundary.
Then we introduce a family $\mathcal{B}$ 
of elements of $\skein{\Sigma}/(\ker \epsilon)^2$ by
\begin{align*}
\mathcal{B} \defeq
\shuugou{1} \cup \shuugou{A+1} \cup \shuugou{\kukakko{x_i,x_j}|i \leq j} \cup
\shuugou{\kukakko{x_i,x_j,x_k}|i < j < k}
\end{align*}
where the fundamental group $\pi_1 (\Sigma)$ is freely generated by
$x_1, \cdots, x_M$.
For details, see Lemma \ref{lemm_generator}.
In section \ref{section_computation}, we prove
the set $\mathcal{B}$ generates $\skein{\Sigma}/(\ker \epsilon)^2$ as
$\Q$ vector space.
In section \ref{section_basis}, we prove the $\Q$-linear independence
of the set $\mathcal{B}$.
To prove the independence, we define a bilinear form of 
$\vartheta ((\cdot)(\cdot)) :
\skein{\Sigma} \times \skein{\Sigma} \to \Q[A^{\pm1}]$.
This bilinear form is non-degenerate, in other words,
for any $x \in \skein{\Sigma} \backslash \shuugou{0}$, there exists
$y \in \skein{\Sigma} $ satisfying $\vartheta(xy) \neq 0$.
Furthermore, we have $\vartheta( (\ker \epsilon)^n (\ker \epsilon)^m)
\in  ((A+1)^{n+m})$.
Using this basis, we obtain an explicit map
\begin{equation*}
\Q \mathcal{T} (\Sigma) \to \skein{\Sigma}/(\ker \epsilon)^2 \simeq \Q \mathcal{B},
L \mapsto (-2)^{-\zettaiti{L}} [L]
\end{equation*}
which is a finite type invariant of order $2$
for links in $\Sigma \times I$,
where $\mathcal{T}(\Sigma)$ is the set of 
unoriented framed link in $\Sigma \times I$.

As an application of the basis, we introduce a new filtration
$\filtn{F^n \skein{\Sigma}}$ satisfying $F^{2n} \skein{\Sigma} = (\ker \epsilon)^n$,
 $F^3 \skein{\Sigma}/F^4 \skein{\Sigma} =\Q \shuugou{\kukakko{x_i,x_j,x_k}|
i<j<k}$ and $F^{2n+1} \skein{\Sigma} =F^3 \skein{\Sigma} F^{2n-2} \skein{\Sigma}$.
We remark that
\begin{align*}
&F^2 \skein{\Sigma}/F^3 \skein{\Sigma} =\Q \oplus S^2 (H_1 (\Sigma,\Q)), \\
&F^3 \skein{\Sigma}/F^4 \skein{\Sigma} =\wedge^3 (H_1(\Sigma,\Q)),
\end{align*}
where we denote the second symmetric tensor of $H_1(\Sigma,\Q)$
by $S^2 (H_1 (\Sigma,\Q))$ and the third exterior power of 
$H_1 (\Sigma,\Q)$ by $\wedge^3 (H_1(\Sigma,\Q))$.
This filtration is finer than $\filtn{(\ker \epsilon)^n}$.
In fact, by Proposition \ref{prop_product_filtration} and
Proposition \ref{prop_bracket_filtration}, we have
\begin{align*}
&F^{2n+1}\skein{\Sigma}F^{2m+1} \skein{\Sigma} \subset F^{2n+2m+2} \skein{\Sigma}, \\
&[F^{2n+1} \skein{\Sigma}, F^{2m+1} \skein{\Sigma}] 
\subset F^{2n+2m} \skein{\Sigma},
\end{align*}
where the bracket $[ \ \ , \ \ ]$ is defined by $[x,y] \defeq \frac{1}{-A+\gyaku{A}}
(xy-yx)$.
Furthermore, by Corollary \ref{cor_filtration_bilinear}, we have
\begin{equation}
\label{equation_inner_product}
\vartheta (F^{n} \skein{\Sigma} F^{m} \skein{\Sigma})
\subset ((A+1)^{\gauss{\frac{n+m+1}{2}}}).
\end{equation}
where $\gauss{x}$ is the largest integer not greater than $x$ for $x \in \Q$.
By the equation, $\vartheta$ induces
$\vartheta_{n} : F^n \skein{\Sigma}/F^{n+1} \skein{\Sigma}
\times F^n \skein{\Sigma}/F^{n+1} \skein{\Sigma} \to
((A+1)^{n})/((A+1)^{n+1}) \simeq \Q$.
By the proof of the independence of $\skein{\Sigma}/(\ker \epsilon)^2$,
$\vartheta_2$ and $\vartheta_3$ are non-degenerate.
Furthermore, for $n=2,3$ and $x \in
F^n \skein{\Sigma} /F^{n+1} \skein{\Sigma}$,
 we have $\vartheta_n (x,x) =0$ if and only if $x =0$.

We define an evaluation map $\mathrm{ev}:\Q \mathcal{T} (\Sigma) \to
\mathrm{Hom}_{\Q} (F^{2n-1} \skein{\Sigma}, \Q[A^\pm])$
by 
\begin{equation*}
(\mathrm{ev}(L))(x) \defeq \frac{1}{(A+1)^n} \vartheta ((-2)^{-\zettaiti{L}}[L],
x).
\end{equation*}
Using the equation (\ref{equation_inner_product}),
the $\Q$-linear map $\mathrm{ev}$
induces a finite type invariant
$\mathrm{ev}:\Q \mathcal{T} (\Sigma) \to
\mathrm{Hom}_{\Q} (F^{2n-1} \skein{\Sigma}, \Q[A^\pm]/
(A+1)^m)$ of order $m$.

Let $\Sigma$ be a compact connected oriented  of genus $0$.
By Lemma \ref{lemm_filt_pure}, we can consider the 
Baker-Campbell-Hausdorff series on
$\widehat{\skein{\Sigma}}$.
In section \ref{section_pure_braid_group},
as an application of this filtration,
we construct an embedding $\zeta$ of the mapping class group $\mathcal{M} (\Sigma)$
of $\Sigma$ into $\widehat{\skein{\Sigma}}$
with respect to a group law using the 
Baker-Campbell-Hausdorff series.
Furthermore we have
\begin{equation*}
\xi (\cdot) =\exp (\sigma(\zeta(\xi)))(\cdot):\widehat{\skein{\Sigma,J}} \to \widehat{\skein
{\Sigma,J}}
\end{equation*}
for any $\xi \in \mathcal{M} (\Sigma)$ and $J$ the finite subset of $\partial \Sigma$.

\section*{Acknowledgment}
The author would like to thank his adviser, Nariya Kawazumi, for helpful discussion
and encouragement. He is also grateful to Kazuo Habiro, Gw\'{e}na\"{e}l Massuyeau, Jun Murakami, Tomotada Ohtsuki and Masatoshi Sato for helpful comments.
In particular, the author  thanks to Jun Murakami
for valuable comments about the theory of 3-dimensinal manifolds, knots and links.
This work was supported by JSPS KAKENHI Grant Number 15J05288 
and the Leading Graduate Course for Frontiers of Mathematical Sciences and Phsyics.

\tableofcontents
\section{Definition and Review}
\label{section_definition_review}

We review some definitions and facts about the Kauffman bracket skein algebra 
on a surface, for details, see \cite{TsujiCSAI}.

Through this section, let $\Sigma$ be a compact connected surface and $I$ the closed interval $[0,1]$.

\subsection{Kauffman bracket skein algebras and modules}

Let $J$ be a finite subset of $\partial \Sigma$.
We denote by $\mathcal{T}(\Sigma,J)$ the set of unoriented framed tangles in
$\Sigma \times I$ with base point set $J$ \cite{TsujiCSAI} Definition 2.1. 
For a tangle diagram $d$, we denote by $T(d)$ the tangle presented by $d$.
The Kauffman bracket skein module $\skein{\Sigma,J}$
is the quotient of $\Q [A.\gyaku{A}] \mathcal{T}(\Sigma,J)$ by the skein relation and the
trivial knot relation \cite{TsujiCSAI} Definition 3.2. 
The skein relation is 
\begin{equation*}
T(d_1) =AT(d_\infty)+\gyaku{A} T(d_0)
\end{equation*}
where $d_1$, $d_\infty$ and $d_0$ are differ
only in an open disk shown in Figure 
\ref{fig_K2},
Figure \ref{fig_Kinfi} and Figure \ref{fig_K0}, respectively.
The trivial knot relation is
\begin{equation*}
T(d) =(-A^2-A^{-2})T(d')
\end{equation*}
where $d$ and $d'$ are differ only in a open disk shown 
a boundary of a disk and empty, respectively.
We remark that  we don't assume  "the boundary skein relation"
and "the value of  a contractible arc" in Muller \cite{Mu2012}.
We write simply $\skein{\Sigma} \defeq \skein{\Sigma, \emptyset}$.
The element of $\skein{\Sigma,J}$ represented by $T \in \mathcal{T}(\Sigma,J)$
is written  $[T]$. 
We remark that we have
\begin{align}
\label{equation_kouten_sa}
&[T(d_1)]-[T(d_2)] =(A-\gyaku{A})([T(d_\infty)]-[T(d_0)]),
\end{align}
where 
$d_1$,$d_2$, $d_\infty$ and $d_0$ are differ only 
in an open disk shown in Figure 
\ref{fig_K2}, Figure \ref{fig_K1},
Figure \ref{fig_Kinfi} and Figure \ref{fig_K0}, respectively.

\begin{figure}[htbp]
	\begin{tabular}{rrrr}
	\begin{minipage}{0.25\hsize}
		\centering
		\includegraphics[width=2cm]{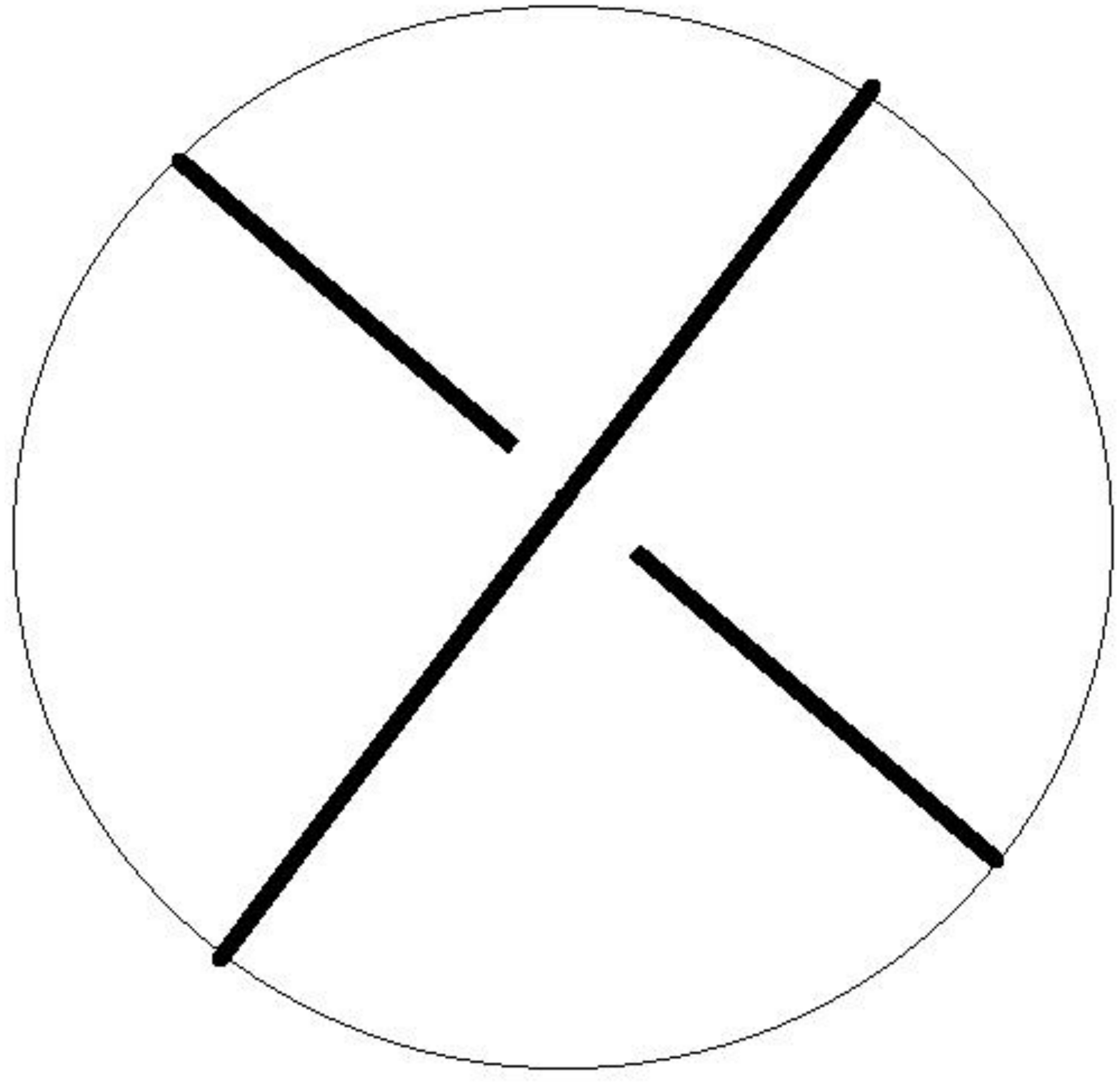}
		\caption{$d_1$}
		\label{fig_K2}
	\end{minipage}
     \begin{minipage}{0.25\hsize}
		\centering
		\includegraphics[width=2cm]{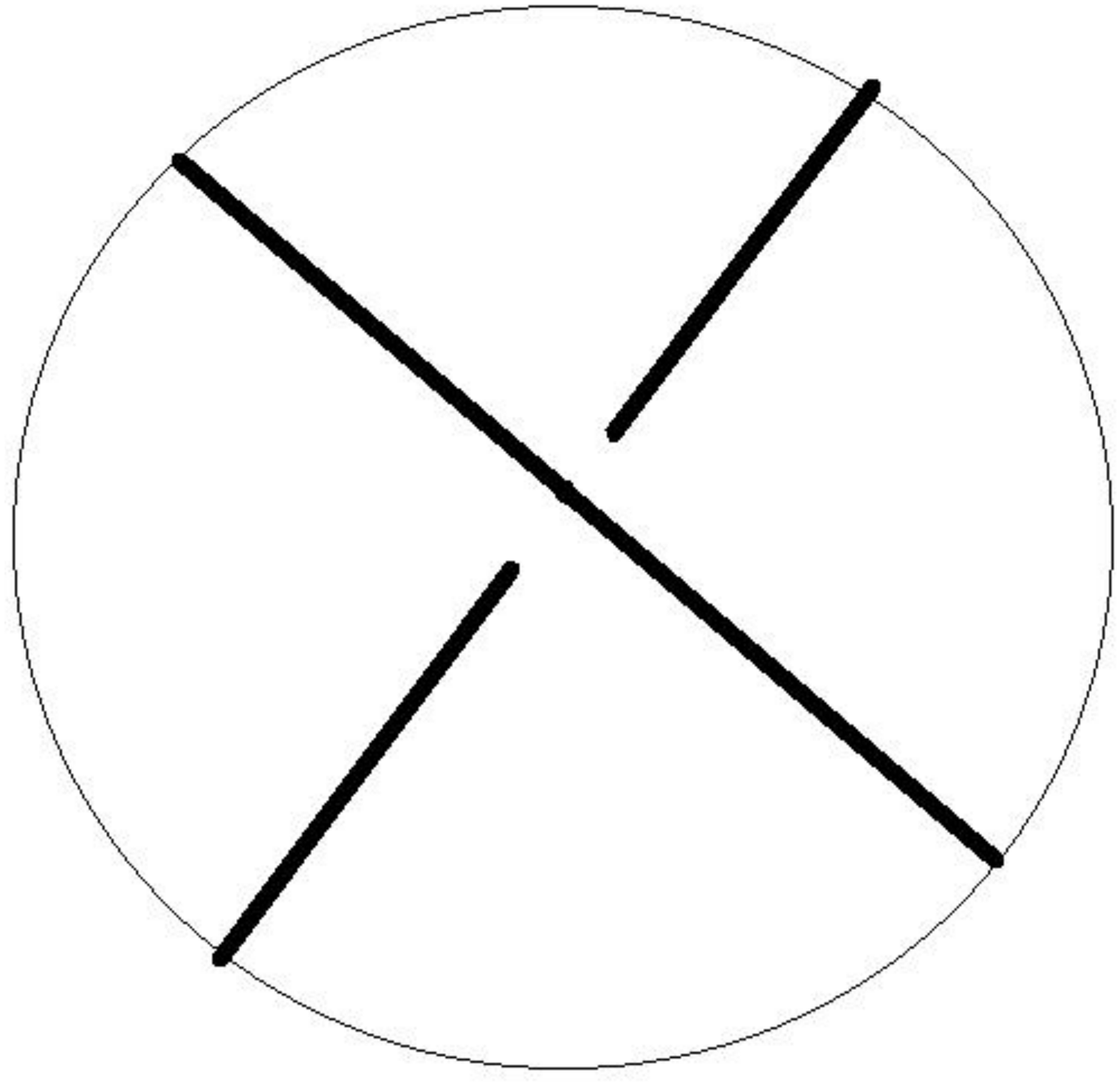}
		\caption{$d_2$}
		\label{fig_K1}
	\end{minipage}
		\begin{minipage}{0.25\hsize}
		\centering
		\includegraphics[width=2cm]{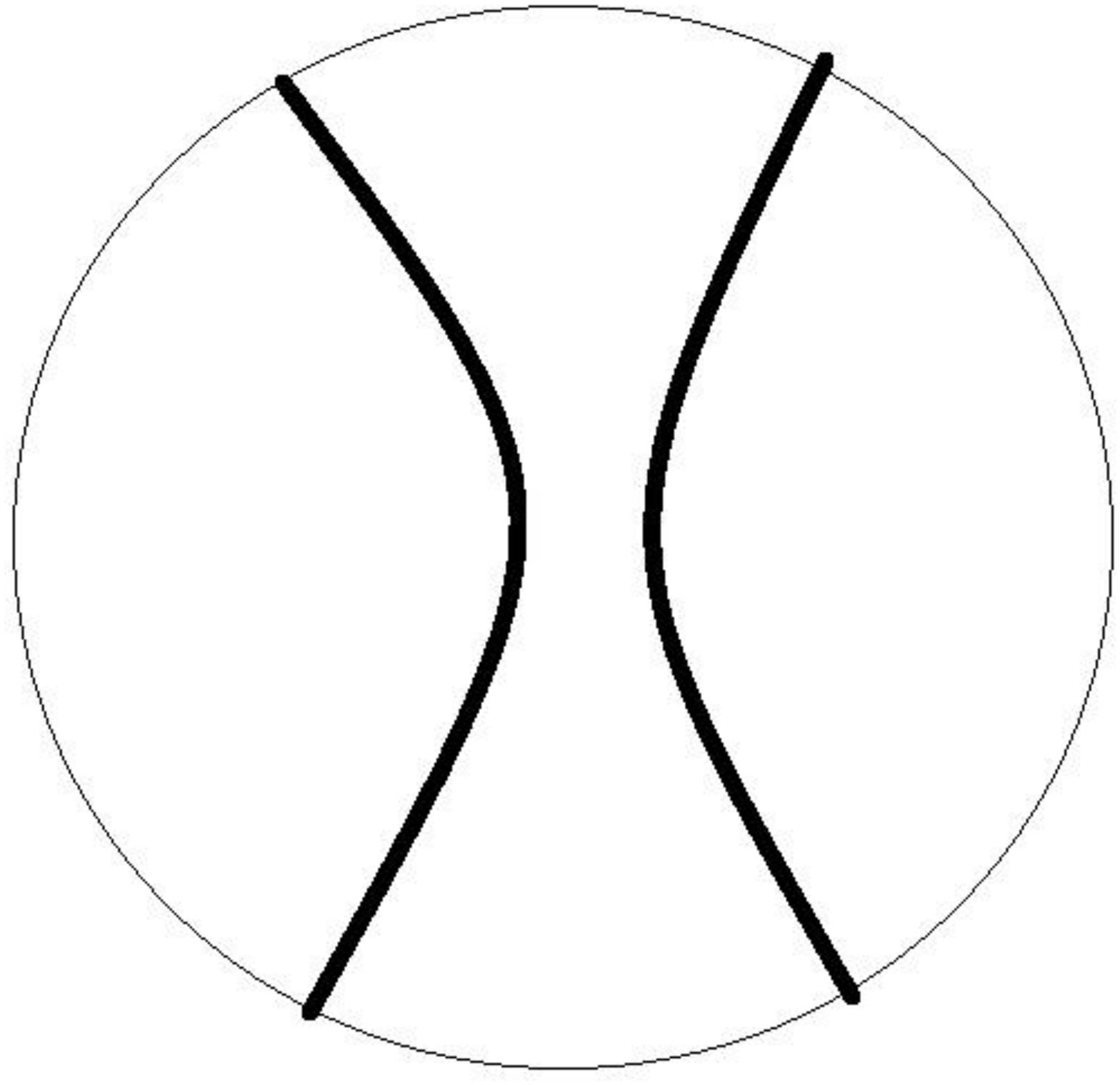}
		\caption{$d_\infty$}
		\label{fig_Kinfi}
	\end{minipage}
		\begin{minipage}{0.25\hsize}
		\centering
		\includegraphics[width=2cm]{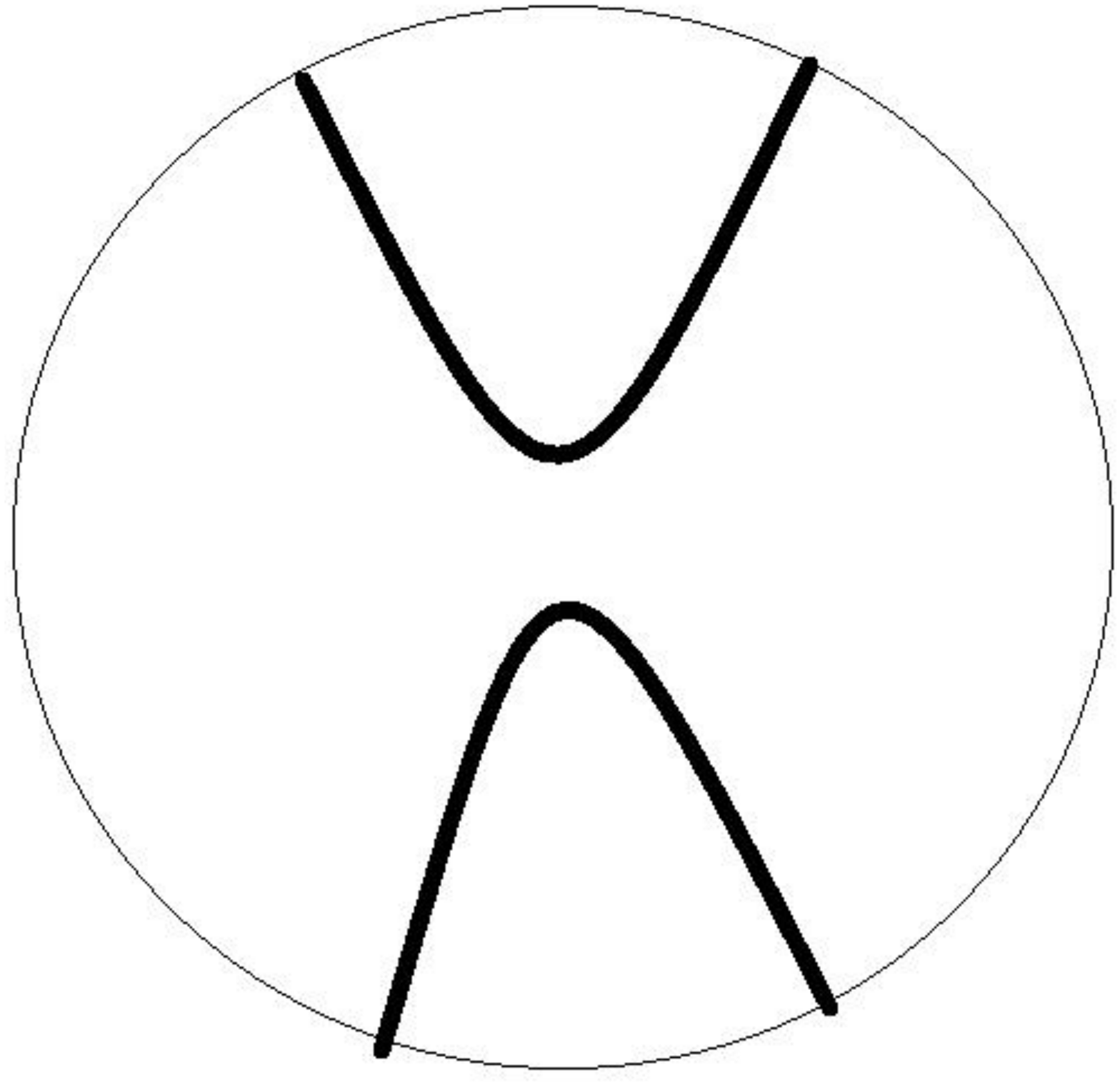}
		\caption{$d_0$}
		\label{fig_K0}
	\end{minipage}
	\end{tabular}
\end{figure}

We denote by $\mathcal{M}(\Sigma)$ the mapping class group of $\Sigma$
fixing the boundary pointwise.
There is a natural action of $\mathcal{M}(\Sigma)$ on $\skein{\Sigma,J}$ \cite{TsujiCSAI} section 2.
The product of $\skein{\Sigma}$ and
the right and left actions of $\skein{\Sigma}$ on $\skein{\Sigma,J}$
are defined by Figure \ref{fig_product_action}, for details, see \cite{TsujiCSAI} 3.1.
The Lie bracket $[,]:\skein{\Sigma} \times \skein{\Sigma} \to \skein{\Sigma}$ is defined by
$[x,y]\defeq \frac{1}{-A+\gyaku{A}} (xy-yx)$.
Furthermore, the action $\sigma()(): \skein{\Sigma} \times \skein{\Sigma,J}
\to \skein{\Sigma,J}$ defined by
$\sigma(x)(z) \defeq \frac{1}{-A+\gyaku{A}}(xz-zx)$
making $\skein{\Sigma,J} $ a $(\skein{\Sigma},[,])$-module with $\sigma$.
For details, see \cite{TsujiCSAI} 3.2.

\begin{figure}
\begin{picture}(300,83)
\put(0,-20){\includegraphics[width=90pt]{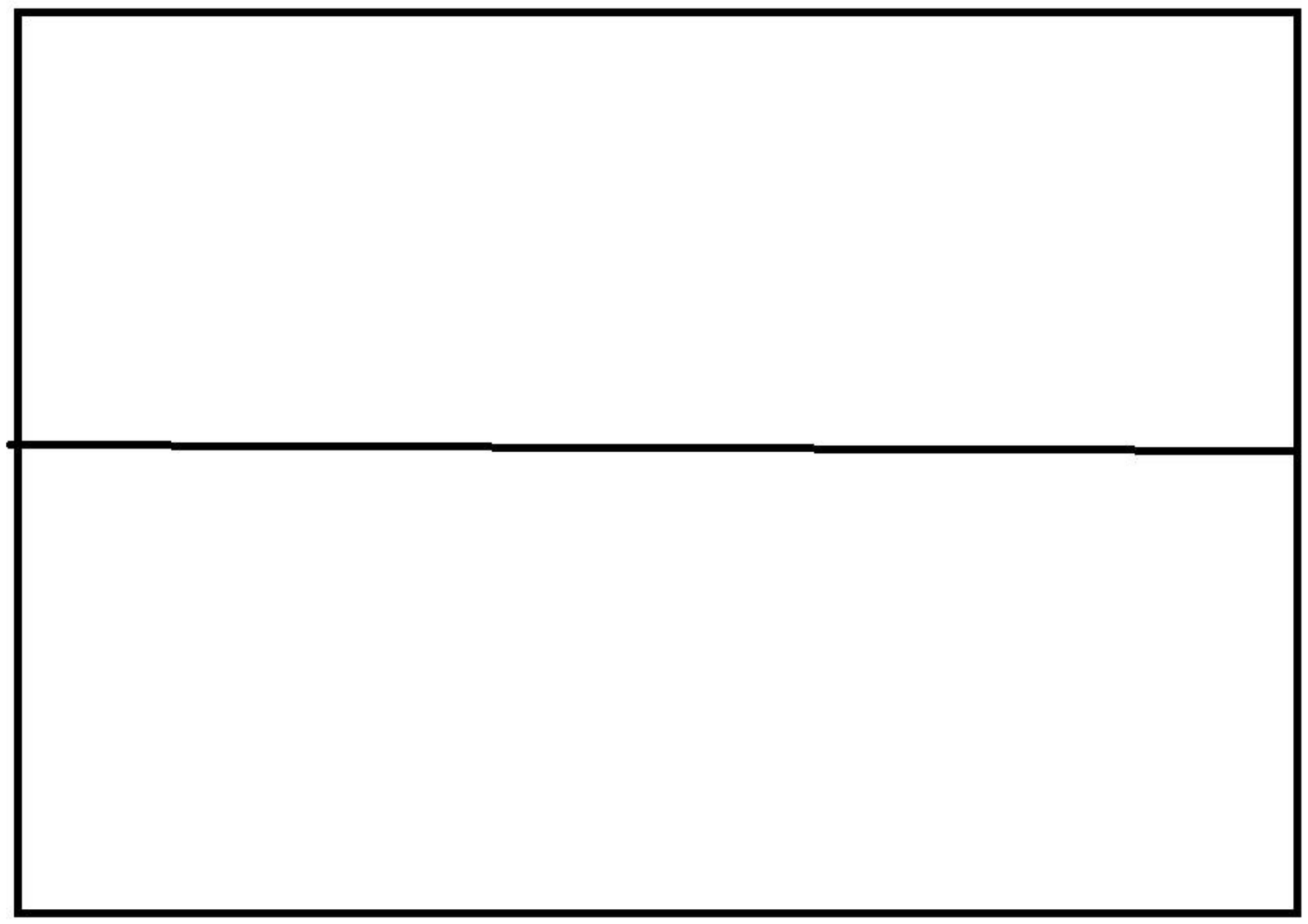}}
\put(100,-20){\includegraphics[width=90pt]{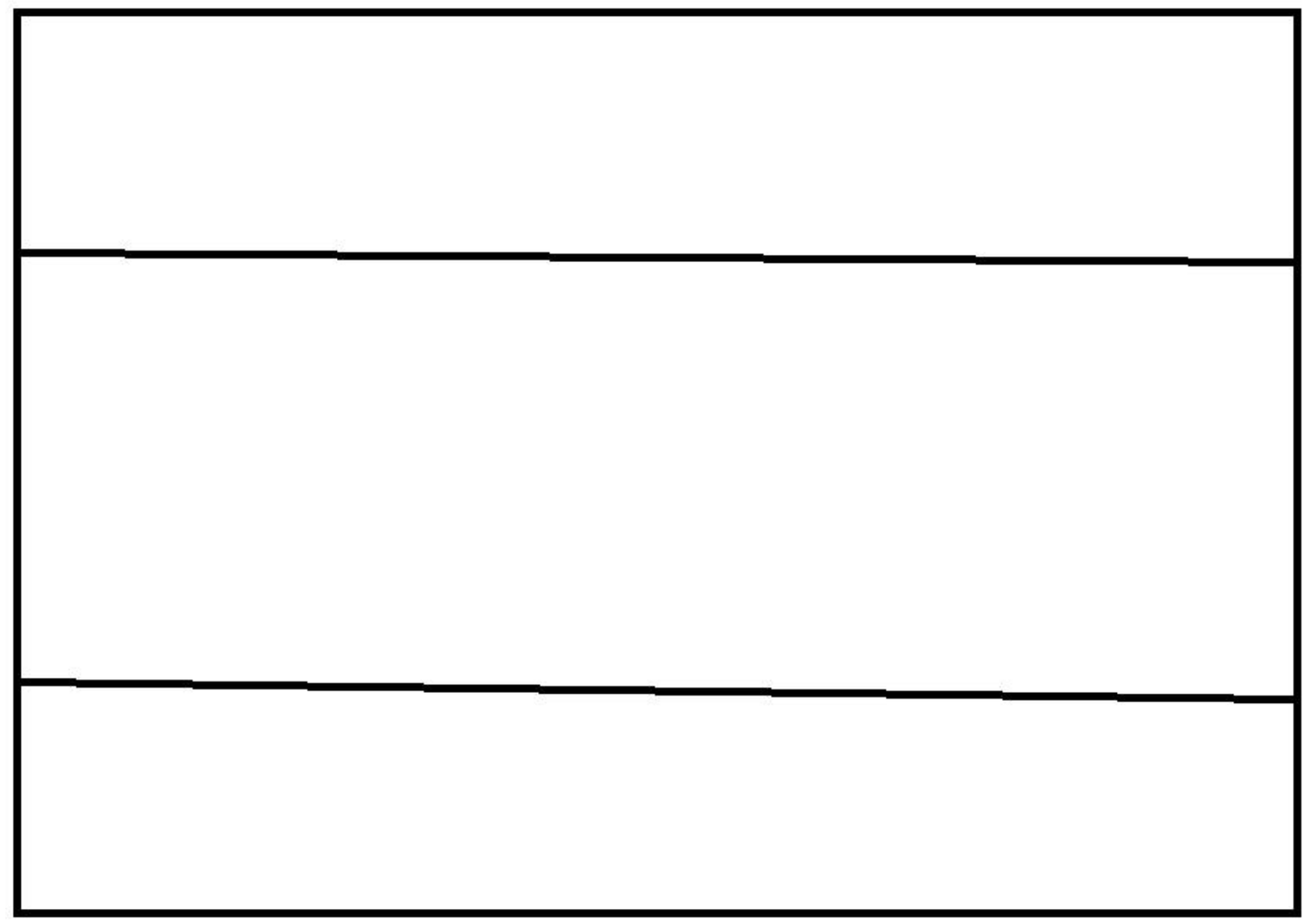}}
\put(200,-20){\includegraphics[width=90pt]{sikaku_3_PNG.pdf}}
\put(10,80){$xy \defeq$}
\put(40,50){$x$}
\put(40,20){$y$}
\put(10,2){$\mathrm{for} \ \  x,y \in \skein{\Sigma}$}
\put(0,10){$0$}
\put(0,55){$1$}
\put(0,32.5){$I$}
\put(40,65){$\Sigma$}

\put(110,80){$xz \defeq$}
\put(140,52){$x$}
\put(140,34){$z$}
\put(110,2){$\mathrm{for} \ \  x\in \skein{\Sigma}$}
\put(110,-10){$\ \ \mathrm{and} \ \ z \in \skein{\Sigma,J}$}
\put(100,10){$0$}
\put(100,55){$1$}
\put(100,32.5){$I$}
\put(140,65){$\Sigma$}

\put(210,80){$zx \defeq$}
\put(240,34){$z$}
\put(240,15){$x$}
\put(210,2){$\mathrm{for} \ \  x \in \skein{\Sigma}$}
\put(210,-10){$\ \ \mathrm{and} \ \ z \in \skein{\Sigma,J}$}
\put(200,10){$0$}
\put(200,55){$1$}
\put(200,32.5){$I$}
\put(240,65){$\Sigma$}

\end{picture}
\caption{The product and the actions}
\label{fig_product_action}
\end{figure}

By \cite{TsujiCSAI} Theorem 3.3, we have the following proposition.

\begin{prop}
\label{prop_map_inj_notcomp}
Let $J$ be a finite subset of $\partial \Sigma$ and $\partial_1, \cdots, \partial_b$
the connected components of $\Sigma$.
If $b \geq 1$ and $\partial_i \cap J \neq \emptyset$
for each $i \in \shuugou{1,2, \cdots,b}$, then
$\mathcal{M}(\Sigma) \to \Aut (\skein{\Sigma,J})$ is injective.

\end{prop}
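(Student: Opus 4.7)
The plan is to combine the explicit basis of $\skein{\Sigma,J}$ by simple diagrams with a standard cutting-along-arcs argument; this is essentially the mechanism behind \cite{TsujiCSAI} Theorem 3.3, which is cited for this result. First I would recall the basis theorem: $\skein{\Sigma,J}$ is a free $\Q[A^{\pm 1}]$-module whose basis is indexed by isotopy classes (rel.~$\partial \Sigma$) of \emph{simple diagrams}, that is, embedded framed $1$-submanifolds of $\Sigma$ with boundary $J$ having no crossings and no contractible closed components. Since any $\xi \in \mathcal{M}(\Sigma)$ fixes the boundary pointwise and is a homeomorphism, it sends each simple diagram to a simple diagram with the same boundary $J$, and the induced action on $\skein{\Sigma,J}$ permutes the basis elements. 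Hence, if $\xi$ acts as $\id$ on $\skein{\Sigma,J}$, then $\xi$ preserves the isotopy class (rel.~$\partial \Sigma$) of every simple diagram.

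Next I would exploit the hypothesis $\partial_i \cap J \neq \emptyset$ for every boundary component $\partial_i$. Under this assumption, one can construct a system of pairwise disjoint simple arcs $\alpha_1, \ldots, \alpha_k$ in $\Sigma$, each with endpoints in $J$, such that cutting $\Sigma$ along $\bigcup_j \alpha_j$ yields a disjoint union of disks. Each $\alpha_j$ is a simple diagram, so by the previous step $\xi(\alpha_j)$ is isotopic to $\alpha_j$ rel.~$\partial \Sigma$. Choosing these isotopies inductively (adjusting so that later arcs are fixed while preserving the earlier ones, using the bigon/innermost-disk lemma for arcs with common endpoints in $J$), I may replace $\xi$ up to isotopy by a representative that is the identity on a closed regular neighborhood $N$ of $\bigcup_j \alpha_j \cup \partial \Sigma$. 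Then $\xi$ restricts on each complementary disk to a homeomorphism fixing its boundary pointwise, and by the Alexander lemma each such restriction is isotopic to the identity rel.~boundary. Gluing these isotopies across $N$ gives $\xi \simeq \id$ in $\mathcal{M}(\Sigma)$, proving injectivity.

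The main obstacle is the basis theorem by simple diagrams, which is the substantive input; once this is available, the rest is a classical disk-decomposition argument. A secondary subtlety is upgrading a free isotopy of each $\alpha_j$ to an ambient isotopy that simultaneously fixes $\alpha_1, \ldots, \alpha_{j-1}$ and the boundary, which requires care when two arcs share endpoints in $J$ but is standard. The hypothesis that each $\partial_i$ meets $J$ is essential precisely so that a cutting arc system of the above type exists; without it, isotopy classes of simple arcs with endpoints in $J$ cannot detect homeomorphisms supported near an unmarked boundary component.
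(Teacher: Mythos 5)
Your argument is correct and is essentially the argument the paper relies on: the paper gives no proof here but simply invokes \cite{TsujiCSAI} Theorem 3.3, whose content is exactly the freeness of $\skein{\Sigma,J}$ on isotopy classes of simple diagrams together with the Alexander-method consequence you describe. Your identification of where the hypothesis $\partial_i \cap J \neq \emptyset$ enters (detecting boundary-parallel twists via arcs fixed rel endpoints, and cutting into disks) is the right one, so no gap to report.
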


\subsection{Filtrations and completions of $\skein{\Sigma}$ and $\skein{\Sigma,J}$}
\label{subsection_filtration_1}
The augmentation map $ \epsilon:\skein{\Sigma} \to \Q$ is defined by
$A \mapsto -1$ and $[L] \mapsto (-2)^{\zettaiti{L}}$ for $L \in \mathcal{T}(\Sigma)$
where $\zettaiti{L}$ is the number of components of $L$.
The augmentation map $\epsilon$ is well-defined by \cite{TsujiCSAI} Proposition
3.10.
We consider the topology on $\skein{\Sigma}$ induced by the filtration
$\filtn{(\ker \epsilon)^n}$ and denote its completion
by $\widehat{\skein{\Sigma}} \defeq \comp{i} \skein{\Sigma}/(\ker \epsilon)^i$.
Let $J$ be a finite subset of $\partial \Sigma$.
We also consider the topology on $\skein{\Sigma,J}$ induced  by
the filtration $\filtn{(\ker \epsilon)^n \skein{\Sigma,J}}$ and 
denote its completion by
$\widehat{\skein{\Sigma}} \defeq \comp{i} \skein{\Sigma,J}/
(\ker \epsilon)^i \skein{\Sigma,J}$.
The product of $\skein{\Sigma}$, the right action and the left action
of $\skein{\Sigma}$ on $\skein{\Sigma,J}$, the bracket $[,]$ of 
$\skein{\Sigma}$ and the action $\sigma $ of $\skein{\Sigma}$ on
$\skein{\Sigma,J}$ are continuous. For details, see \cite{TsujiCSAI} Theorem 3.12.

\begin{prop}[\cite{TsujiCSAI} Theorem 5.1]
\label{thm_independent}
Let $\Sigma$ and $\Sigma'$ be two compact connected oriented surfaces,
J and J' finite subsets of $\partial \Sigma$ and $\partial \Sigma'$, respectively. 
We assume there exists a orientation preserving 
diffeomorphism $\mathcal{X}: (\Sigma \times I,
J \times I) \to (\Sigma' \times I, J' \times I)$.
Then we have $\mathcal{X} ((\ker \epsilon)^n  \skein{\Sigma,J}) 
=(\ker \epsilon)^n \skein{\Sigma',J'}$ for each $n$.

\end{prop}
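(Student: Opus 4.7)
The diffeomorphism $\mathcal{X}$ carries framed tangles in $(\Sigma \times I, J \times I)$ to framed tangles in $(\Sigma' \times I, J' \times I)$ and preserves both the Kauffman skein relation and the trivial-knot relation, since these are local (supported in small $3$-balls) and invariant under orientation-preserving diffeomorphism. Hence $\mathcal{X}$ induces a $\Q[A^{\pm 1}]$-linear isomorphism $\mathcal{X}_* : \skein{\Sigma, J} \to \skein{\Sigma', J'}$. The content of the proposition is that the subspace $(\ker \epsilon)^n \skein{\Sigma, J}$, whose definition \emph{a priori} uses the product structure of $\Sigma \times I$ (via the algebra multiplication of $\skein{\Sigma}$ and its action on $\skein{\Sigma, J}$), is in fact an invariant of the pair $(\Sigma \times I, J \times I)$ alone.

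\textbf{Intrinsic characterization.} The plan is to prove $(\ker \epsilon)^n \skein{\Sigma, J} = V_n$, where $V_n$ is the $\Q$-linear span of the elements
\begin{equation*}
(A+1)^k \sum_{L' \subset L} (-1)^{\zettaiti{L'}}(-2)^{-\zettaiti{L'}} [L' \cup T],
\end{equation*}
taken over all tangles $T \in \mathcal{T}(\Sigma, J)$, framed links $L$ in $\Sigma \times I$ disjoint from $T$, and non-negative integers $k$ with $k + \zettaiti{L} \geq n$. The subspace $V_n$ is manifestly preserved by $\mathcal{X}_*$: the diffeomorphism sends disjoint link-tangle pairs to disjoint link-tangle pairs, preserves $\zettaiti{L}$, and fixes the coefficient $(A+1)^k$. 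Once the equality $V_n = (\ker \epsilon)^n \skein{\Sigma, J}$ is proved on both sides, the proposition follows immediately.

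\textbf{Proving the characterization, and the main obstacle.} For $V_n \subset (\ker \epsilon)^n \skein{\Sigma, J}$: if $L$ can be isotoped into a collar $\Sigma \times [1-\varepsilon, 1]$ while remaining disjoint from $T$, the alternating sum factors as $\alpha \cdot [T]$ with $\alpha \defeq \sum_{L' \subset L} (-1)^{\zettaiti{L'}}(-2)^{-\zettaiti{L'}}[L'] \in (\ker \epsilon)^{\zettaiti{L}} \subset \skein{\Sigma}$ by the Lemma 5.3 of \cite{TsujiCSAI} cited in the introduction; multiplying by $(A+1)^k \in (\ker \epsilon)^k$ then lands the product in $(\ker \epsilon)^{k+\zettaiti{L}} \skein{\Sigma, J}$. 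When $L$ and $T$ are linked one cannot directly isotope $L$ above $T$, and I would instead resolve each $L$-$T$ crossing by the skein relation and induct on the total $L$-$T$ crossing number to reduce to the unlinked case. For the reverse inclusion, any generator $x_1 \cdots x_n \cdot z$ of $(\ker \epsilon)^n \skein{\Sigma, J}$ can be expanded by writing each $x_i$ as a $\Q$-combination of $(A+1)$ and of differences $[K]-(-2)^{\zettaiti{K}}$, then stacking the resulting diagrams above a tangle representative of $z$; distributivity produces precisely the alternating sums listed in $V_n$. The main obstacle is the inductive resolution of $L$-$T$ crossings in the first inclusion, where each application of the skein relation yields two residual diagrams and one must track both $k$ and $\zettaiti{L}$ carefully to verify that every such residual term still lies in $V_n$.
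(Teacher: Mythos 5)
This paper does not actually prove Proposition \ref{thm_independent}; it is quoted from \cite{TsujiCSAI} Theorem~5.1, so your attempt can only be measured against the proof in that source. Your overall strategy coincides with it: replace $(\ker \epsilon)^n \skein{\Sigma,J}$, whose definition uses the product structure of $\Sigma\times I$ through the $\skein{\Sigma}$-action, by a submodule $V_n$ spanned by alternating sums over sublinks, which is visibly preserved by any diffeomorphism of pairs. (One small repair: take $V_n$ to be the $\Q[A^{\pm1}]$-span rather than the $\Q$-span, so that coefficients such as $A-\gyaku{A}$ arising below stay inside it; this does not affect diffeomorphism invariance.) The reverse inclusion $(\ker\epsilon)^n\skein{\Sigma,J}\subset V_n$ is, as you say, bookkeeping.

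The genuine gap is exactly where you locate it, namely the inclusion $V_n\subset(\ker\epsilon)^n\skein{\Sigma,J}$ when $L$ is linked with $T$ (or with itself); this is \cite{TsujiCSAI} Lemma~5.3 and is the entire content of the theorem, so it cannot be left as an ``obstacle.'' Moreover the tool you propose for it fails. Resolving an $L$--$T$ crossing by the skein relation $[T(d_1)]=A[T(d_\infty)]+\gyaku{A}[T(d_0)]$ fuses the relevant component of $L$ into $T$ (or fuses two components of $L$ into one), so $\zettaiti{L}$ drops by one while $k$ is unchanged: the resulting terms satisfy only $k+\zettaiti{L}\geq n-1$, and the induction on crossing number does not close. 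The step that works is a crossing \emph{change} via equation (\ref{equation_kouten_sa}): the error term $(A-\gyaku{A})\bigl([T(d_\infty)]-[T(d_0)]\bigr)$ also loses one component, but it gains the coefficient $A-\gyaku{A}=\gyaku{A}(A-1)(A+1)\in((A+1))$, so $k$ increases by one exactly when $\zettaiti{L}$ decreases by one and the quantity $k+\zettaiti{L}$ is preserved. Finitely many such crossing changes (with a double induction on $\zettaiti{L}$ and on the number of changes still required) bring every component of $L$ into a split, unknotted position in a collar $\Sigma\times[1-\varepsilon,1]$, where the alternating sum factors as $\prod_i\bigl(1-(-2)^{-1}[L_i]\bigr)\cdot[T]$, a product of $\zettaiti{L}$ elements of $\ker\epsilon$ acting on $[T]$. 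With ``resolve'' replaced by ``change'' and the bookkeeping above, your outline becomes the proof of the cited source; as written, it fails at its first inductive step.
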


\begin{prop}[\cite{TsujiCSAI} Theorem5.5]
Let $\Sigma$ be a compact connected oriented surface and
$J$ a finite subset of $\partial \Sigma$.
If $\partial \Sigma \neq \emptyset$,
then the natural homomorphism 
$\skein{\Sigma,J} \to \widehat{\skein{\Sigma,J}}$
is injective.
\end{prop}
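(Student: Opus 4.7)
The plan is to prove $\bigcap_{n \geq 1} (\ker\epsilon)^n \skein{\Sigma,J} = 0$ by dominating the augmentation filtration by a more transparent, geometrically defined filtration whose intersection is visibly trivial, and then checking that $\ker\epsilon$ lowers the new filtration by at least one step per multiplication.

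Since $\partial\Sigma \neq \emptyset$, the module $\skein{\Sigma,J}$ is a free $\Q[A^{\pm 1}]$-module with a basis $\mathcal{B}_0$ consisting of isotopy classes of simple multi-tangle diagrams (disjoint unions of simple closed curves and properly embedded arcs with endpoints in $J$, with no component bounding a disk). Assign a complexity $c(b) \in \Z_{\geq 0}$ to each $b \in \mathcal{B}_0$ equal to its total number of components, and set
\begin{equation*}
F^j \skein{\Sigma,J} \defeq \bigoplus_{c(b) \geq j} \Q[A^{\pm 1}] \cdot b, \qquad G^n \skein{\Sigma,J} \defeq \sum_{i + j \geq n} (A+1)^i F^j \skein{\Sigma,J}.
\end{equation*}
Triviality of $\bigcap_n G^n \skein{\Sigma,J}$ is immediate from freeness: if $x = \sum_b f_b(A) b$ lies in every $G^n$, projecting onto the $b$-component forces $f_b \in (A+1)^{n - c(b)} \Q[A^{\pm 1}]$ for arbitrarily large $n$, so $f_b = 0$.

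The remaining inclusion $(\ker\epsilon)^n \skein{\Sigma,J} \subset G^n \skein{\Sigma,J}$ then reduces to the one-step statement $(\ker\epsilon) \cdot G^n \subset G^{n+1}$. Since $\ker\epsilon$ is generated as an ideal by $A+1$ (which manifestly raises the $G$-grade by one) and by the elements $[L] - (-2)^{\zettaiti{L}}$ for framed links $L$, I would expand $([L] - (-2)^{\zettaiti{L}}) \cdot b$ using the Kauffman skein relation at each crossing between representatives of $L$ and $b$. Writing $A = -1 + (A+1)$, each smoothing splits into a leading contribution (at $A = -1$) and an $(A+1)$-correction; by the trivial knot relation $-A^2 - A^{-2} \equiv -2 \pmod{A+1}$, the leading contributions should assemble into $(-2)^{\zettaiti{L}} b$ and cancel the constant term, so that every surviving term carries an extra $(A+1)$ factor. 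An induction on $\zettaiti{L}$, bootstrapped by the sublink identity $\sum_{L'\subset L}(-1)^{\zettaiti{L'}}(-2)^{-\zettaiti{L'}}[L'] \in (\ker\epsilon)^{\zettaiti{L}-1}$ recalled in the introduction, then controls the complexity of these surviving terms.

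The main obstacle is the bookkeeping in this last step: after the leading cancellation, one must verify that every surviving term genuinely lies in $G^{c(b)+1}$ rather than merely $G^{c(b)}$. Because a smoothing can either merge or split components of $b$, the naive $(A+1)$-factor alone is not yet enough when complexity drops; one has to track the $(A+1)$-valuation and the component count simultaneously and argue that component-decreasing resolutions come with additional $(A+1)$-factors from the $-1$-vs-$A$ bookkeeping. This is precisely where the geometric content of the multi-tangle basis $\mathcal{B}_0$ — and the fact that $b$ was simple to begin with — enters decisively.
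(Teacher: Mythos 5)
There is a genuine gap, and it is fatal to the strategy as written: the filtration $G^n$ you define does \emph{not} contain $(\ker\epsilon)^n$, so the reduction to the one-step statement $(\ker \epsilon)\cdot G^n \subset G^{n+1}$ already fails at $n=0$. Concretely, let $c$ be a non-contractible simple closed curve on $\Sigma$ and take $b=\emptyset$ (or, if $J\neq\emptyset$, a minimal-complexity arc diagram disjoint from $c$). Then $[c]-(-2)^{1}=[c]+2[\emptyset]\in\ker\epsilon$, but its $\emptyset$-coefficient is the unit $2$, which is not divisible by $A+1$, while $\emptyset$ has complexity $0$; hence $[c]+2[\emptyset]\notin G^1$. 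The step of your argument that breaks is the claimed ``leading cancellation'': at $A=-1$ crossing changes are free, but a component of $L$ that is not null-homotopic in $\Sigma$ cannot be shrunk to a trivial circle by crossing changes, so the leading term of $[L]\cdot b$ is \emph{not} $(-2)^{\zettaiti{L}}b$ in general — it is a genuine multicurve class, and the constant $(-2)^{\zettaiti{L}}b$ survives with an $(A+1)$-free coefficient. This is not a bookkeeping issue about merging versus splitting components; the complexity function itself is wrong. A repairable version of your idea would filter by the number of \emph{shifted} factors, i.e.\ work with the basis $\{(c_1+2)\cdots(c_k+2)\}$ rather than $\{c_1\cdots c_k\}$ together with the $(A+1)$-adic valuation — this is essentially the content of \cite{TsujiCSAI} Lemma 5.3 quoted in the introduction — but then the one-step inclusion is exactly the nontrivial point and cannot be dismissed as an expansion around $A=-1$.

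For comparison, the present paper does not reprove this proposition (it is imported from \cite{TsujiCSAI} Theorem 5.5), but the ingredients it does quote in Section \ref{section_basis} give the intended argument, which is a duality argument rather than a filtration comparison: the pairing $\vartheta_\xi$ satisfies both $\vartheta_\xi((\ker\epsilon)^N)\subset((A+1)^N)$ and the non-degeneracy statement that for every $x\neq 0$ there is $y$ with $\vartheta_\xi(xy)\neq 0$. If $0\neq x\in\bigcap_N(\ker\epsilon)^N$, choose such a $y$; then $xy\in(\ker\epsilon)^N$ for all $N$, so $\vartheta_\xi(xy)\in\bigcap_N((A+1)^N)\Q[A^{\pm1}]=0$, a contradiction. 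You should either adopt that route or rebuild your filtration around the shifted basis before the domination step can be believed.
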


By this theorem and Proposition \ref{prop_map_inj_notcomp},
we have the following.

\begin{cor}
\label{cor_map_inj}
Let $J$ be a finite subset of $\partial \Sigma$ and $\partial_1, \cdots, \partial_b$
the connected components of $\partial \Sigma$.
If $b \geq 1$ and $\partial_i \cap J \neq \emptyset$
for each $i \in \shuugou{1,2, \cdots,b}$, then
$\mathcal{M}(\Sigma) \to \Aut (\widehat{\skein{\Sigma,J}})$ is injective.
\end{cor}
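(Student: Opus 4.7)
The plan is to deduce the corollary by combining the two preceding results via a naturality argument: the injectivity of $\mathcal{M}(\Sigma) \to \Aut(\skein{\Sigma,J})$ from Proposition \ref{prop_map_inj_notcomp}, together with the injectivity of the completion map $\skein{\Sigma,J} \to \widehat{\skein{\Sigma,J}}$ from the immediately preceding proposition (\cite{TsujiCSAI} Theorem 5.5).

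The first step is to observe that the $\mathcal{M}(\Sigma)$-action on $\skein{\Sigma,J}$ preserves the filtration $\{(\ker\epsilon)^n\skein{\Sigma,J}\}_{n\geq 0}$. This is a direct consequence of Proposition \ref{thm_independent} applied with $\Sigma'=\Sigma$, $J'=J$, and $\mathcal{X}=\xi\times\id_I$ for any $\xi\in\mathcal{M}(\Sigma)$: the proposition gives $\xi((\ker\epsilon)^n\skein{\Sigma,J})=(\ker\epsilon)^n\skein{\Sigma,J}$ for every $n$. Consequently, each $\xi$ induces a continuous $\Q[A^{\pm 1}]$-linear automorphism of $\widehat{\skein{\Sigma,J}}$, and the natural map $\iota:\skein{\Sigma,J}\to\widehat{\skein{\Sigma,J}}$ is $\mathcal{M}(\Sigma)$-equivariant.

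Now suppose $\xi\in\mathcal{M}(\Sigma)$ acts as the identity on $\widehat{\skein{\Sigma,J}}$. For any $z\in\skein{\Sigma,J}$, equivariance gives $\iota(\xi(z))=\xi(\iota(z))=\iota(z)$. Because $\partial\Sigma\neq\emptyset$ (we are assuming $b\geq 1$), the cited Theorem 5.5 tells us that $\iota$ is injective, so $\xi(z)=z$. Thus $\xi$ acts as the identity on the uncompleted module $\skein{\Sigma,J}$.

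Finally, applying Proposition \ref{prop_map_inj_notcomp} with the hypothesis that every boundary component meets $J$, we conclude $\xi=\id$ in $\mathcal{M}(\Sigma)$, which proves the injectivity of $\mathcal{M}(\Sigma)\to\Aut(\widehat{\skein{\Sigma,J}})$. The only non-formal point is the filtration preservation in the first step, and since it is already packaged in Proposition \ref{thm_independent}, no genuine obstacle arises; the corollary is essentially a two-line diagram chase.
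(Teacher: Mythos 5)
Your proposal is correct and follows essentially the same route as the paper, which derives the corollary directly from Proposition \ref{prop_map_inj_notcomp} together with the injectivity of $\skein{\Sigma,J}\to\widehat{\skein{\Sigma,J}}$. You have merely made explicit the equivariance and filtration-preservation details that the paper leaves implicit.
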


For a simple closed curve $c$,
we denote 
\begin{equation}
\label{equation_L_c}
L(c) \defeq \frac{-A+\gyaku{A}}{4 \log(-A)} (\arccosh  (-\frac{c}{2}))^2
\end{equation}
where $c$ is also denoted by the element of $\skein{\Sigma}$ represented by 
the knot presented by the simple closed curve $c$.

\begin{thm}[\cite{TsujiCSAI} Theorem 4.1]
\label{thm_Dehn_twist}
Let $J$ be a finite subset of $\partial \Sigma$, $c$  a simple
closed curve and $t_c$ the Dehn twist along $c$.
Then we have
\begin{equation*}
t_c(\cdot) = \exp (\sigma(L(c))) \defeq \sum_{i=0}^\infty \frac{1}{i!} (\sigma(L(c)))^i
\in \Aut (\widehat{\skein{\Sigma,J}}).
\end{equation*}
\end{thm}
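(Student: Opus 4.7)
My plan is to reduce the identity to a local computation in a closed annular neighborhood $N(c) \cong S^1 \times [-1,1]$ of $c$ and verify it there on the standard basis of the annular relative skein module. First I would check that $L(c) \in \ker\epsilon \cdot \widehat{\skein{\Sigma}}$, so that $\sigma(L(c))$ is topologically nilpotent and the exponential converges to a continuous automorphism of $\widehat{\skein{\Sigma,J}}$: writing $u \defeq c + 2 \in \ker\epsilon$ one has $(\arccosh(-c/2))^2 = -u \cdot (1 + O(u))$ as a formal power series in $u$, while the prefactor $(-A+\gyaku{A})/(4\log(-A))$ evaluates to $1/2$ at $A = -1$ and is in particular a unit in $\widehat{\skein{\Sigma}}$. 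Since both $t_c$ and $\exp(\sigma(L(c)))$ act trivially on skein elements representable by tangles disjoint from $N(c)$ (the second because $L(c)$ and any such tangle commute), Proposition \ref{thm_independent} together with the standard decomposition of tangles along the cutting circles $\partial N(c)$ reduces the problem to the case of a single arc $\alpha$ crossing $c$ transversely once in the annular model.

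Taking $A_c \defeq S^1 \times [-1,1]$ with $J = \{p_-, p_+\}$ on the two boundary circles, let $\{\alpha_n\}_{n \in \Z}$ be the basis of $\skein{A_c, J}$ where $\alpha_n \defeq t_c^n(\alpha_0)$ and $\alpha_0 \defeq \{p_-\} \times [-1,1]$. Applying the Kauffman skein relation at the unique crossing of $c$ with $\alpha_n$ yields
\begin{equation*}
c \cdot \alpha_n = A \alpha_{n+1} + \gyaku{A} \alpha_{n-1}, \qquad \alpha_n \cdot c = \gyaku{A} \alpha_{n+1} + A \alpha_{n-1}.
\end{equation*}
Writing $T(\alpha_n) \defeq \alpha_{n+1} = t_c(\alpha_n)$, left- and right-multiplication by $c$ become the commuting operators $AT + \gyaku{A}T^{-1}$ and $\gyaku{A}T + AT^{-1}$ on $\skein{A_c,J}$; for any formal power series $f$ one thus has $M_{f(c)}^L = f(AT + \gyaku{A}T^{-1})$ and $M_{f(c)}^R = f(\gyaku{A}T + AT^{-1})$.

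I would then set $T = \exp(s)$ and $A = -\exp(\hbar/2)$ formally, so that $\log(-A) = \hbar/2$, $-A + \gyaku{A} = 2\sinh(\hbar/2)$, and
\begin{equation*}
AT + \gyaku{A}T^{-1} = -2\cosh(s + \hbar/2), \quad \gyaku{A}T + AT^{-1} = -2\cosh(s - \hbar/2).
\end{equation*}
The corresponding values of $\arccosh(-\cdot/2)$ are $s + \hbar/2$ and $s - \hbar/2$, so substituting $f = L$ in the displayed operator identity gives
\begin{align*}
(-A + \gyaku{A})\, \sigma(L(c))
&= L(AT + \gyaku{A}T^{-1}) - L(\gyaku{A}T + AT^{-1}) \\
&= \frac{-A + \gyaku{A}}{4 \log(-A)} \bigl[(s + \hbar/2)^2 - (s - \hbar/2)^2\bigr] = (-A + \gyaku{A}) \cdot s,
\end{align*}
whence $\sigma(L(c)) = s = \log T$ and $\exp(\sigma(L(c))) = T = t_c$, as desired. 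The main technical obstacle is the rigorous justification of the formal substitutions $T = \exp(s)$ and $A = -\exp(\hbar/2)$: one must show that $T - \id$ and $A + 1$ are topologically nilpotent in the $\ker\epsilon$-adic topology on the endomorphism algebra of $\widehat{\skein{\Sigma,J}}$, so that $s = \log T$ and $\hbar = 2\log(-A)$ are well-defined formal series, and then that the resulting identity in $\Q[[s, \hbar]]$ descends to an identity of operators on the completion.
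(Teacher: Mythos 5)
First, note that this paper does not prove Theorem \ref{thm_Dehn_twist} at all: it is imported verbatim from \cite{TsujiCSAI} Theorem 4.1, so there is no in-text proof to compare against. Your one-strand annulus computation is, however, exactly the identity at the heart of that proof: the conjugate operators $AT+\gyaku{A}T^{-1}=-2\cosh(s+\hbar/2)$ and $\gyaku{A}T+AT^{-1}=-2\cosh(s-\hbar/2)$, the difference of squares $(s+\hbar/2)^2-(s-\hbar/2)^2=2s\hbar$, and the conclusion $\sigma(L(c))=\log t_c$ are all correct, and this is precisely why the $\arccosh$ appears in $L(c)$.

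There are nevertheless two genuine gaps. The first is the convergence claim: it is false that $L(c)\in\ker\epsilon$ forces $\sigma(L(c))$ to be topologically nilpotent. The filtration estimates available (\cite{TsujiCSAI} Lemma 3.11, and Proposition \ref{prop_bracket_filtration} here) give only $[\ker\epsilon,\ker\epsilon]\subset\ker\epsilon$, i.e.\ $\sigma$ of a filtration-degree-one element \emph{preserves} but does not \emph{increase} $(\ker\epsilon)^m\skein{\Sigma,J}$; indeed the present paper devotes Proposition \ref{prop_bch_aut_jouken} and Lemma \ref{lemm_bch_aut_jouken} to proving that iterates of $\sigma(s)$ eventually raise the filtration for \emph{special} elements $s$, precisely because this is not automatic. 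The convergence of $\exp(\sigma(L(c)))$ uses the specific form of $L(c)$ as a series in $c+2$ for a \emph{simple} closed curve and must be argued, not asserted. The second gap is the reduction ``to a single arc crossing $c$ once.'' The skein module does not split as a tensor product along the circles $\partial N(c)$, and for a tangle meeting the annulus in $n\geq 2$ parallel strands the resolutions of the $n$ crossings of $c$ with the strands produce turnback terms that rewire distinct strands into each other; consequently left and right multiplication by $c$ on the relative skein module of the annulus with $2n$ marked points are \emph{not} of the form $AT+\gyaku{A}T^{-1}$ on a basis of $t_c$-translates, and the through-degree components mix. Handling this multi-strand case (by induction on $n$, or via the eigenspace/Chebyshev decomposition of the annular skein module) is the bulk of the actual proof of \cite{TsujiCSAI} Theorem 4.1, and your proposal passes over it with the phrase ``standard decomposition of tangles along the cutting circles,'' which does not exist in the form you need.
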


\section{The calculations of $\skein{\Sigma} /(\ker \epsilon)^2$}
\label{section_computation}
Let $\Sigma$ be a compact connected oriented surface
as in section \ref{section_definition_review}.

\subsection{the unoriented Goldman Lie algebra}
We recall the Goldman Lie algebra.
We denote by $\hat{\pi} (\Sigma ) =[S^1,\Sigma]$ the homotopy 
set of oriented free loops on $\Sigma$. In other words,
$\hat{\pi} (\Sigma)$ is the set of conjugacy classes of 
$\pi_1 (\Sigma)$.

Let $\Sigma$ be a compact connected oriented surface.
Let $\alpha$ and $\beta$ be oriented immersed loops on $\Sigma$ such that
their intersections consist of transverse double points.
For each $p \in \alpha \cap \beta$, let $\alpha_p \beta_p \in \pi_1 (\Sigma,p)$
be the loop going first along the loop $\alpha$ based at $p$, then going along 
$\beta$ based at $p$.
Also, let $\epsilon (p, \alpha,\beta) \in \shuugou{1,-1}$ be the local intersection
number of $\alpha$ and $\beta$ at $p$. The Goldman bracket of 
$\alpha$ and $\beta$ is defined as 
\begin{equation*}
[\alpha,\beta] \defeq \sum_{p \in \alpha \cap \beta} \epsilon(p,\alpha,\beta)
\zettaiti{\alpha_p \beta_p} \in \Q  \hat{\pi}(\Sigma).
\end{equation*}
Here we denote by $\zettaiti{\cdot}: \pi_1 (\Sigma) \to \hat{\pi}(\Sigma)$
the natural projection, and we also denote by 
$\zettaiti{\cdot}: \Q \pi_1 (\Sigma) \to \Q \hat{\pi}(\Sigma)$
its $\Q$-linear extension. The free $\Q$-vector space 
$\Q \hat{\pi} (\Sigma)$ spanned by the set $\hat{\pi}(\Sigma)$ equipped with this bracket
is a Lie algebra. See \cite{Goldman}.

According to \cite{Goldman} there is a Lie algebra similar to
$\Q \hat{\pi}(\Sigma)$  but based on homotopy classes of unoriented
loops. 
The map $\hat{\pi}(\Sigma) \to
\hat{\pi}(\Sigma),\zettaiti{a} \mapsto \zettaiti{\gyaku{a}}$
which reverses the orientation of oriented loops extends to a Lie algebra 
automorphism $\Q \hat{\pi}(\Sigma) \to
\Q \hat{\pi}(\Sigma)$ denoted by $\upsilon$.
Clearly, $(\upsilon+\id)(\Q \hat{\pi}(\Sigma))$ is a Lie subalgebra of
$\Q \hat{\pi}(\Sigma)$ which is a free module over the set of
all $\zettaiti{a}+\upsilon(\zettaiti{a}) \defeq (a)_\square$, $a \in \pi_1(\Sigma)$.
The following formula given in \cite{Goldman}
computes the bracket of generators $(a)_\square$ and
$(b)_\square$
\begin{equation*}
[(a)_\square,(b)_\square] = \sum_{p \in \zettaiti{a} \cap \zettaiti{b}}
\epsilon(p,\zettaiti{a},\zettaiti{b}) ((a_pb_p)_\square-(a_p\gyaku{b_p})_\square).
\end{equation*}
We denote simply $\shuugou{(r)_\square | r \in \pi_1 (\Sigma)}$ and
$(\upsilon+\id)(\Q \hat{\pi}(\Sigma))$ by $\pi_\square (\Sigma)$ and 
$\Q \pi_\square (\Sigma)$.

\subsection{The homomorphism $\kappa: \Q \pi_\square (\Sigma)
\to (\ker \epsilon)/(\ker \epsilon)^2 $}
\label{subsection_kappa}
In this subsection, we construct a Lie algebra homomorphism 
$\kappa: \Q \pi_\square (\Sigma) \to (\ker \epsilon)/(\ker \epsilon)^2 $.
This is an analogy of \cite{Turaev} Theorem 3.3.

For $x \in \pi_1(\Sigma)$,
we define $\kukakko{x} \in (\ker \epsilon)/(\ker \epsilon)^2 $ 
by $[L_x]+2-3 w(L_x) (A-\gyaku{A})$ using $L_x \in \mathcal{T}(\Sigma)$
with $p_1 (L_x) =\zettaiti{x}$ where
the writhe $w(L_x)$ is the sum of the signs of the crossing of a diagram 
presenting $L_x$.

\begin{lemm}
The map $\kukakko{\cdot}:\pi_1 (\Sigma) \to (\ker \epsilon)/(\ker \epsilon)^2$
is well-defined.
\end{lemm}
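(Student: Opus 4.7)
\emph{Plan.} The quantity to be shown well-defined is
\[
[L_x]+2-3\,w(L_x)(A-\gyaku{A}) \in (\ker\epsilon)/(\ker\epsilon)^2,
\]
so I must (a) confirm the representative lies in $\ker\epsilon$ and (b) show it is independent, modulo $(\ker\epsilon)^2$, of the framed knot $L_x \in \mathcal{T}(\Sigma)$ chosen with $p_1(L_x)=\zettaiti{x}$. For (a), since $L_x$ has one component, $\epsilon([L_x])=-2$ and $\epsilon(A-\gyaku{A})=0$, so the whole expression has $\epsilon$-value $0$. For (b), any two framed knots representing the same free homotopy class are related by Reidemeister II and III moves, by Reidemeister I moves (which change the framing), and by self-crossing changes (which can alter the knot type); the first kind leave both $[L_x]$ and $w(L_x)$ invariant, so it suffices to handle the latter two.

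For a Reidemeister I move adding a positive kink, $[L'_x]=-A^3[L_x]$ and $w(L'_x)=w(L_x)+1$, so using the factorisations $A^3+1=(A+1)(A^2-A+1)$ and $A-\gyaku{A}=(A-1)(A+1)\gyaku{A}$, the difference of the two expressions becomes
\[
-(A^3+1)[L_x]-3(A-\gyaku{A}) = -(A+1)\bigl((A^2-A+1)[L_x]+3(A-1)\gyaku{A}\bigr).
\]
Since $(A+1)\in\ker\epsilon$, the product belongs to $(\ker\epsilon)^2$ once the bracketed factor is seen to be in $\ker\epsilon$; applying $\epsilon$ yields $3\cdot(-2)+3\cdot(-2)\cdot(-1)=0$, as required.

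For a self-crossing change, equation~(\ref{equation_kouten_sa}) gives $[T(d_1)]-[T(d_2)]=(A-\gyaku{A})([T(d_\infty)]-[T(d_0)])$, while the writhe changes by $w(d_1)-w(d_2)=2$, so the difference of the two expressions equals
\[
(A-\gyaku{A})\bigl([T(d_\infty)]-[T(d_0)]-6\bigr).
\]
Again $(A-\gyaku{A})\in\ker\epsilon$, so it remains to verify that the second factor lies in $\ker\epsilon$. For a self-crossing on a knot, exactly one of $T(d_\infty),T(d_0)$ is a knot and the other is a two-component link, and the conventions fixed by Figures~\ref{fig_Kinfi}--\ref{fig_K0}, interpreted against the sign convention for $d_1$, give $\epsilon([T(d_\infty)])-\epsilon([T(d_0)])=4-(-2)=6$, whence the bracketed factor evaluates to $0$ under $\epsilon$.

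The main obstacle is the sign bookkeeping in the self-crossing step: one must confirm that with the paper's conventions, the sign of $w(d_1)-w(d_2)$ agrees with the sign of the component-count difference of the two smoothings, so that the coefficient $-6$ (rather than $+6$) matches $\epsilon([T(d_\infty)]-[T(d_0)])=6$. Once this sign coherence is extracted from the figures, the rest of the argument is a mechanical divisibility check through the factorisations by $A+1$ and $A-\gyaku{A}$.
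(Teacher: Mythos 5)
Your proof is correct and follows essentially the same route as the paper: reduce to invariance under self-crossing changes, handled via equation (\ref{equation_kouten_sa}) together with the writhe correction so that the difference factors as a product of two elements of $\ker\epsilon$, and under kink addition, handled via $[L']=-A^{\pm3}[L]$ and the divisibility of $A^3+1$ and $A-\gyaku{A}$ by $A+1$. On the sign point you flagged as the main obstacle, your bookkeeping is the internally consistent one: pairing $w(d_1)-w(d_2)=2$ with $\epsilon([T(d_\infty)])-\epsilon([T(d_0)])=4-(-2)=6$ makes the second factor vanish under $\epsilon$, whereas the paper's text as written takes $\zettaiti{T(d_\infty)}=1$ and $\zettaiti{T(d_0)}=2$ together with $w(L_1)-w(L_2)=2$, which would give $\epsilon([L_\infty]-[L_0]-6)=-2-4-6=-12\neq 0$ --- an evident labelling typo that your version silently corrects.
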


\begin{proof}
Let $d_1$, $d_2$, $d_3$ and $d_4$ be four link diagrams in $\Sigma$
which are differ only in an open disk
shown in Figure 
\ref{fig_K2}, Figure \ref{fig_K1},
Figure \ref{fig_Kinfi} and Figure \ref{fig_K0}, respectively
such that $\zettaiti{T(d_1)} =\zettaiti{T(d_2)} = \zettaiti{T(d_\infty)} =1$
and $\zettaiti{T(d_0)} =2$. We denote $L_i \defeq T(d_i)$ for $i \in \shuugou{
1,2,0, \infty}$.
We have $w(L_1)-w(L_2) =2$.
By the equation (\ref{equation_kouten_sa}), we have 
\begin{align*}
&([L_1]+2-3w(L_1)(A-\gyaku{A}))-([L_2]+2-3w(L_2)(A-\gyaku{A})) \\
&=(A-\gyaku{A})([L_\infty]-[L_0])-3(w(L_1)-w(L_2))(A-\gyaku{A}) \\
&=(A-\gyaku{A})([L_\infty]-[L_0]-6).
\end{align*}
Since $\epsilon(A-\gyaku{A})=\epsilon([L_\infty]-[L_0]-6)=0$, we have
$([L_1]+2-3w(L_1)(A-\gyaku{A}))-([L_2]+2-3w(L_2)(A-\gyaku{A})) \in (\ker \epsilon)^2$.

Let $d$ and $d'$ be two knot diagrams in $\Sigma$ 
 which are differ
only in an open disk shown in the figure.
We denote $L \defeq  T(d)$ and $T' \defeq T(d')$.

\begin{picture}(300,80)
\put(0,0){\includegraphics[width=60pt]{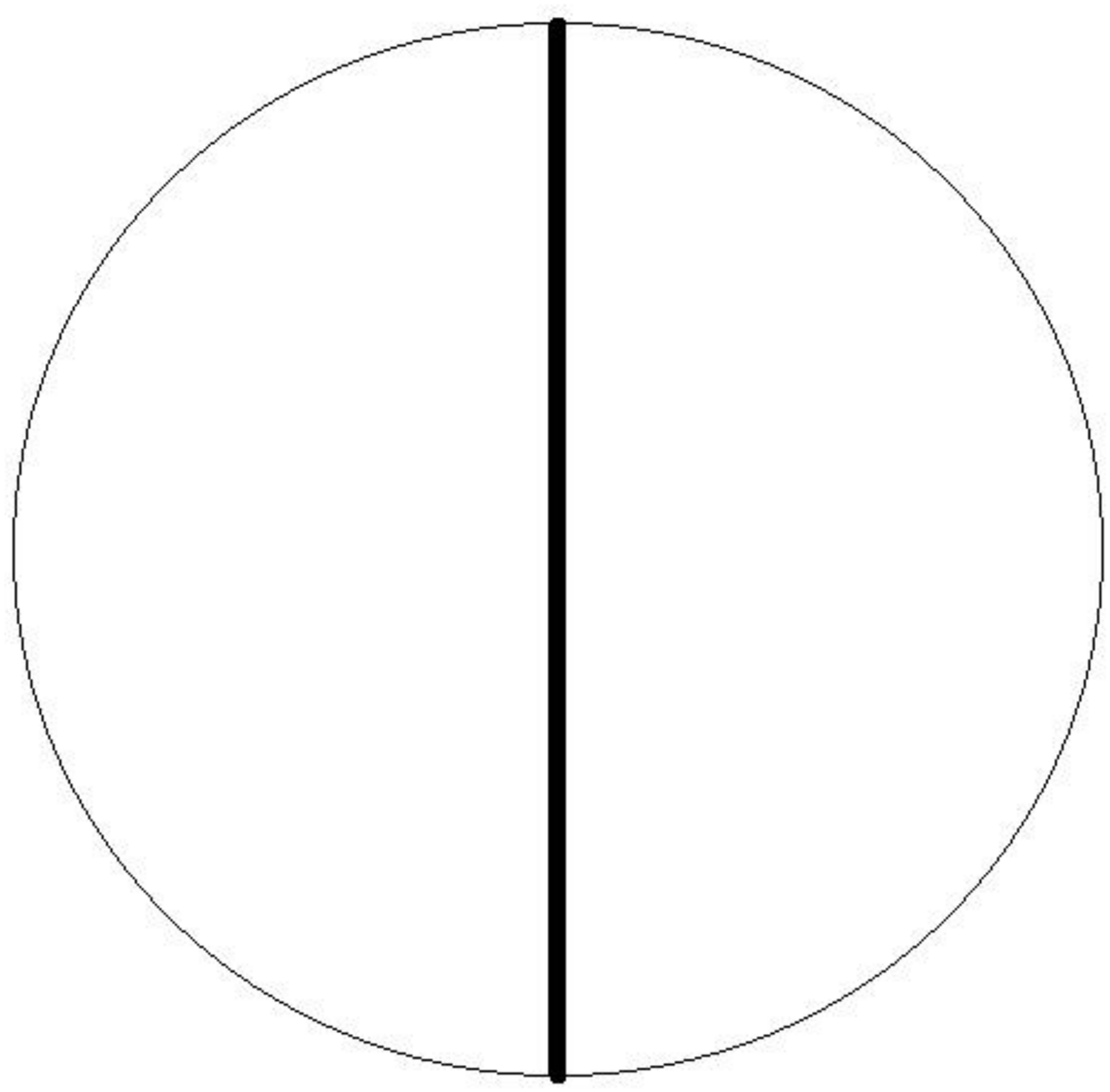}}
\put(60,0){\includegraphics[width=60pt]{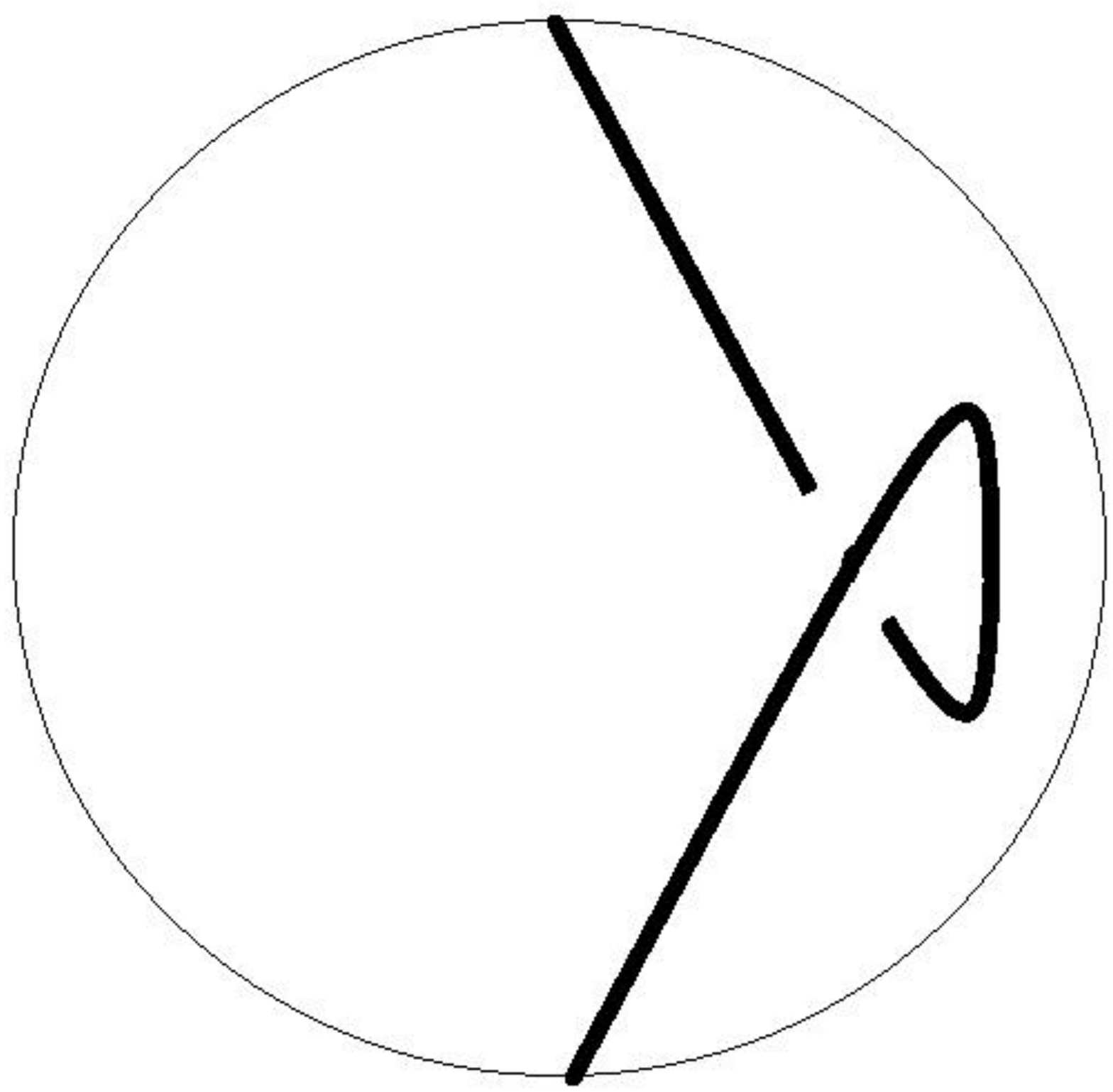}}
\put(20,70){$d$}
\put(80,70){$d'$}
\end{picture}

Then $w(L)-w(L')=-1$, and so
\begin{align*}
&([L]+2-3w(L)(A-\gyaku{A}))-([L']+2-3w(L')(A-\gyaku{A})) \\
&=(A^3+1)[L']-3(w(L_1)-w(L_2))(A-\gyaku{A}) \\
&=(A+1)((A^2-A+1)[L']+3(1-\gyaku{A})).
\end{align*}
Since $\epsilon(A+1) =\epsilon((A^2-A+1)[L']+3(1-\gyaku{A}))=0$,
we have $([L]+2-3w(L)(A-\gyaku{A}))-([L']+2-3w(L')(A-\gyaku{A})) \in 
(\ker \epsilon)^2$.
 
This finishes the proof.
\end{proof}

Here we remark that $\kukakko{1} =0$ 
for the identity $1 \in \pi_1 (\Sigma)$
and $\kukakko{x} =\kukakko{yx\gyaku{y}}$ for $x,y \in \pi_1(\Sigma)$.
We also denote the $\Q$-linear extension of the map $\kukakko{\cdot}$
by $\kukakko{\cdot} :\Q \pi_1 (\Sigma) \to \ker{\epsilon}/(\ker \epsilon)^2$.

Since $[ \ker \epsilon, (\ker \epsilon)^2] \subset (\ker \epsilon)^2$
and $[ (\ker \epsilon)^2, \ker \epsilon] \subset (\ker \epsilon)^2$
\cite{TsujiCSAI} Lemma 3.11,
$[,]:\skein{\Sigma} \times \skein{\Sigma} \to \skein{\Sigma}$ induces 
$[,]:(\ker \epsilon)/(\ker \epsilon)^2 \times 
(\ker \epsilon)/(\ker \epsilon)^2\to (\ker \epsilon)/(\ker \epsilon)^2$.

\begin{thm}
\label{thm_kappa}
This $\Q$-linear map $\kappa: \Q \pi_\square (\Sigma)
\to (\ker \epsilon)/(\ker \epsilon)^2, (x)_\square \mapsto -\kukakko{x}$ is 
a Lie algebra homomorphism.
\end{thm}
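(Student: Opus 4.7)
The plan is to compute $[\kukakko{a},\kukakko{b}]$ in $(\ker\epsilon)/(\ker\epsilon)^2$ directly from the skein relations at the intersection points of representing loops, then identify the answer with $\kappa$ applied to the Goldman bracket. I would first verify $\Q$-linear well-definedness of $\kappa$ on $\Q\pi_\square(\Sigma)$: conjugation invariance of $\kukakko{\cdot}$ is already noted in the text, and the identity $\kukakko{x}=\kukakko{\gyaku{x}}$ holds because the Kauffman bracket treats links as unoriented, so $L_x$ and $L_{\gyaku{x}}$ represent the same framed knot with the same writhe.

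The crucial simplification is that the correction terms $2$ and $-3w(L_a)(A-\gyaku{A})$ in $\kukakko{a}=[L_a]+2-3w(L_a)(A-\gyaku{A})$ are scalars in $\Q[A^{\pm1}]\subset\skein{\Sigma}$, hence central. Therefore $[\kukakko{a},\kukakko{b}]=[[L_a],[L_b]]$ holds in $\skein{\Sigma}$ itself, and the problem reduces to showing
\begin{equation*}
[[L_a],[L_b]]\equiv-\sum_{p\in\zettaiti{a}\cap\zettaiti{b}}\epsilon(p,a,b)\bigl(\kukakko{a_pb_p}-\kukakko{a_p\gyaku{b_p}}\bigr)\pmod{(\ker\epsilon)^2}.
\end{equation*}

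To compute the left-hand side, choose representatives so that $L_a$ lies at height $2/3$ and $L_b$ at height $1/3$ in $\Sigma\times I$, with projections $\bar a,\bar b$ in general position and transverse double points exactly at $\zettaiti{a}\cap\zettaiti{b}$. Then $[L_a][L_b]$ and $[L_b][L_a]$ are represented by diagrams $D_{a/b}$ and $D_{b/a}$ that differ only in the over/under choice at each $p\in\bar a\cap\bar b$. Flipping these crossings one at a time using (\ref{equation_kouten_sa})---each flip contributing $\epsilon(p,a,b)(A-\gyaku{A})$ times a smoothing difference at $p$---and dividing by $-A+\gyaku{A}$ yields
\begin{equation*}
[[L_a],[L_b]]=-\sum_p\epsilon(p,a,b)\bigl([T(d_\infty^{(p)})]-[T(d_0^{(p)})]\bigr),
\end{equation*}
where $T(d_\infty^{(p)})$ and $T(d_0^{(p)})$ denote the two smoothings at $p$ with all other crossings kept in a fixed intermediate state. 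Under the standard Kauffman conventions, the identification of $[T(d_\infty^{(p)})]-[T(d_0^{(p)})]$ with $\kukakko{a_pb_p}-\kukakko{a_p\gyaku{b_p}}$ (with the appropriate sign baked into the matching) reduces to a convention check at a single model crossing, by bilinearity in the number of intersection points. Since the two smoothed diagrams share all residual crossings, they have the same writhe; the well-definedness lemma for $\kukakko{\cdot}$ then yields, modulo $(\ker\epsilon)^2$, exactly the desired term-by-term identification, and summation recovers the required identity.

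The main technical obstacle is controlling the ambiguity in the intermediate diagrams: different orderings of the crossing flips produce different ``other'' crossings in each $T(d_\ast^{(p)})$. This ambiguity is controlled by the factor $(A-\gyaku{A})\in\ker\epsilon$ carried by each summand before dividing, so modulo $(\ker\epsilon)^2$ only the class of each smoothing modulo $\ker\epsilon$ matters; combined with the writhe equality of the two resolutions at each $p$, the $(A-\gyaku{A})$-corrections in the well-definedness lemma drop out cleanly, leaving the canonical values $\kukakko{a_pb_p}$ and $\kukakko{a_p\gyaku{b_p}}$.
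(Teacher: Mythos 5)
Your overall strategy matches the paper's: reduce to $[[L_a],[L_b]]$ using centrality of the correction terms, resolve the crossings of $\bar a\cap\bar b$ one at a time via equation (\ref{equation_kouten_sa}), and identify the resulting smoothings with $\kukakko{a_pb_p}$ and $\kukakko{a_p\gyaku{b_p}}$ through the well-definedness lemma. The gap is in your final identification step, and it is twofold. First, your mechanism for controlling the ambiguity of the intermediate diagrams does not work: the factor $A-\gyaku{A}$ that each summand carries ``before dividing'' is exactly consumed by the division by $-A+\gyaku{A}$ in the definition of the bracket, so after dividing, each term $[T(d_\infty^{(p)})]-[T(d_0^{(p)})]$ appears with coefficient $\pm 1$; changing the over/under state of a residual crossing then changes such a term by an element of $\ker\epsilon$ that is in general \emph{not} in $(\ker\epsilon)^2$ (the two resolutions of a crossing differ in component number by one, so their difference is not in $\ker\epsilon$, and multiplying by $A-\gyaku{A}$ only lands you in $\ker\epsilon$). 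The individual terms of your sum are therefore not well defined modulo $(\ker\epsilon)^2$, and you cannot argue ``term by term.''

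Second, the claim that the two smoothings at $p$ ``have the same writhe'' is false. They share the same residual crossings as unoriented crossings, but the two smoothings connect the strands so that the $b$-part is traversed in opposite directions, which flips the signs of all residual crossings between $\bar a$ and $\bar b$; in the paper's notation, $w(T(1,i))$ and $w(T(-1,i))$ differ by twice a partial sum of the $\epsilon_j$. Consequently the $3w(\cdot)(A-\gyaku{A})$ corrections from the well-definedness lemma do \emph{not} drop out at each intersection point. What closes the argument --- and what your proof is missing --- is that after summing over all intersection points these corrections assemble into $6(A-\gyaku{A})\sum_{i<j}(\epsilon_i\epsilon_j-\epsilon_j\epsilon_i)=0$, a global antisymmetric cancellation. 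This is exactly the bookkeeping the paper carries out explicitly; without it the identification with the unoriented Goldman bracket is not established.
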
 

\begin{proof}
Let $d_a$ and $d_b$  be two knot diagrams in $\Sigma$
whose intersections  consist of transverse double points 
$P_1,P_2, \cdots, P_m$.
We fix orientations of $d_a$ and $d_b$.
Let $\alpha$ and $\beta$ be two elements of $[S^1,\Sigma]$ such that
$\alpha = p_1(d_a)$ and $\beta = p_1(d_b)$.
We write simply $\epsilon_i \defeq \epsilon(P_i,d_a,d_b) $ for $i=1,2, \dots,m$.
Let $\alpha_\star$ and $\beta_\star$ be two elements of $\pi_1(\Sigma)$ 
such that $\zettaiti{\alpha_\star} =\alpha$ and $\zettaiti{\beta_\star}= \beta$.

For $i=1,2, \cdots, m$, let $d(1,i)$ and $d(-1,i)$ be two tangle diagrams
satisfying the following contitions.
\begin{itemize}
\item The two tangle diagrams $d(1,i)$ and $d(-1,i)$ equal
$d_a \cup d_b$ with the same height-information as
$d_a$ and $d_b$ except for the neighborhoods of the intersections
of $d_a$ and $d_b$.
\item The branches of $d(1,i)$ and $d(-1,i)$ in the neighborhood of $P_j$
belonging to $d_a$ are over crossings for
$j=1, \cdots, i-1$.
\item The branches of $d(1,i)$ and $d(-1,i)$ in the neighborhood of $P_j$
belonging to $d_a$ are over crossings for 
$j=i+1, \cdots,m$.
\item The two tangle diagrams $d(1,i)$ and $d(-1,i)$  are as shown in Figure
\ref{fig_d_and_d_1}  and Figure \ref{fig_d_and_d_2}, respectively, 
in the neighborhood of $P_i$.
\end{itemize}

\begin{figure}[htbp]
	\begin{tabular}{cc}
	\begin{minipage}{0.33\hsize}
		\centering
\includegraphics[width=3cm]{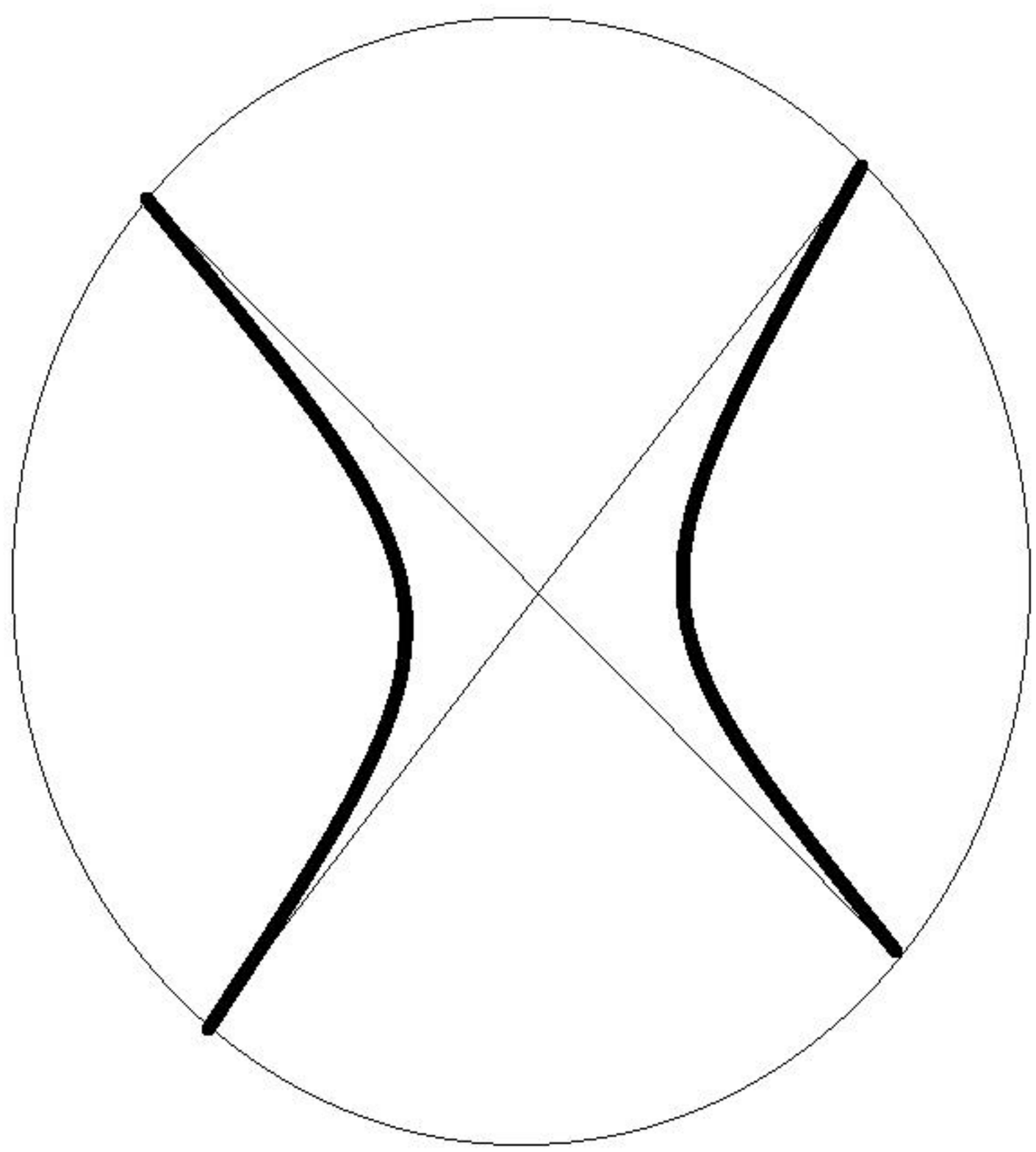}
\put(-30,30){$d_b$}
\put(-30,83){$d_a$}
		\caption{}
		\label{fig_d_and_d_1}
	\end{minipage}
     \begin{minipage}{0.33\hsize}
		\centering
		\includegraphics[width=3cm]{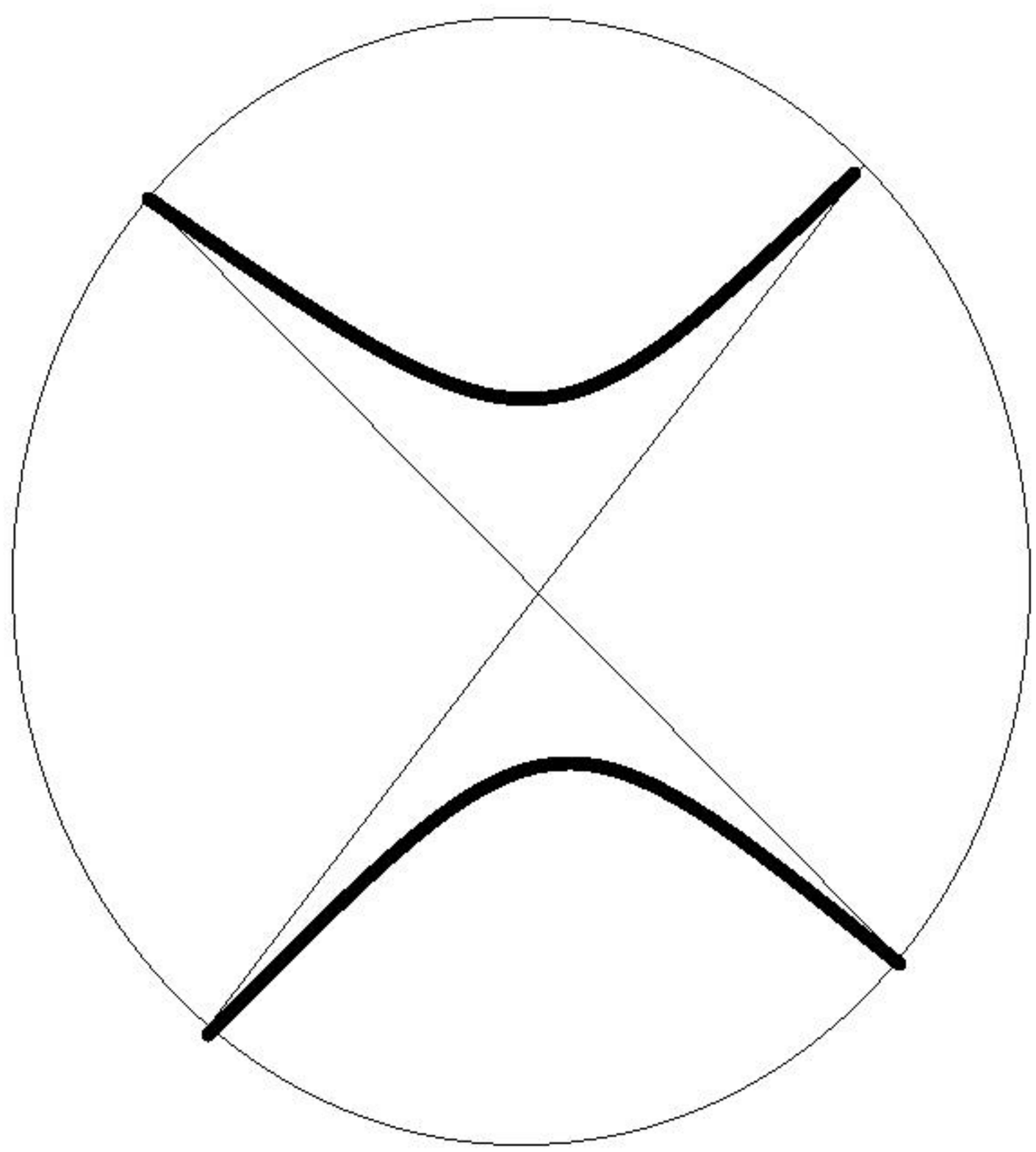}
\put(-30,30){$d_b$}
\put(-30,85){$d_a$}
		\caption{}
		\label{fig_d_and_d_2}
	\end{minipage}
	\end{tabular}
\end{figure}

We denote $T(1,i) \defeq T(d(1,i)) $ and $T(-1,i) \defeq T(d(-1,i))$
for $ i=1, \cdots, m$. 

Using the equation (\ref{equation_kouten_sa}), we have 
\begin{equation*}
[[T_a],[T_b]] =-\sum^m_{i=1}([T(1,i)]-[T(-1,i)]).
\end{equation*}

If $\epsilon_i =1$, we have 
\begin{align*}
\kukakko{\alpha_{P_i}\beta_{P_i}} &= [T(1,i)]+2-3(w(T)+w(T')-\epsilon_1-
\dots-\epsilon_{i-1}+\epsilon_{i+1}+ \cdots +\epsilon_{m})(A-\gyaku{A}), \\
\kukakko{ \alpha_{P_i}\gyaku{\beta_{P_i}}} &= [T(-1,i)]+2-3(w(T)+w(T')+\epsilon_1+
\dots+\epsilon_{i-1}-\epsilon_{i+1}- \cdots -\epsilon_{m})(A-\gyaku{A}). \\
\end{align*}

If $\epsilon_i =-1$, we have 
\begin{align*}
\kukakko{ \alpha_{P_i}\gyaku{\beta_{P_i}}} &= [T(1,i)]+2-3(w(T)+w(T')+\epsilon_1+
\dots+\epsilon_{i-1}-\epsilon_{i+1}- \cdots -\epsilon_{m}), \\
\kukakko{\alpha_{P_i}\beta_{P_i}} &= [T(-1,i)]+2-3(w(T)+w(T')-\epsilon_1-
\dots-\epsilon_{i-1}+\epsilon_{i+1}+ \cdots +\epsilon_{m}). \\
\end{align*}

Hence, we have
\begin{align*}
&[-\kukakko{\alpha_\star},-\kukakko{\beta_\star}] \\
&=[T_a+2-3w(T_a)(A-\gyaku{A}),T_b+2-3w(T_b)(A-\gyaku{A})] \\
&=[T_a,T_b] \\
&=-\sum^m_{i=1}([T(1,i)]-[T(-1,i)]) \\
&=-\sum^m_{i=1}\epsilon_i(\kukakko{\alpha_{P_i}\beta_{P_i}}-\kukakko{
 \alpha_{P_i}\gyaku{\beta_{P_i}}} +6(-\epsilon_1-
\dots-\epsilon_{i-1}+\epsilon_{i+1}+ \cdots +\epsilon_{m})(A-\gyaku{A})) \\
&=-\sum^m_{i=1}\epsilon_i(\kukakko{\alpha_{P_i}\beta_{P_i}}-\kukakko{
 \alpha_{P_i}\gyaku{\beta_{P_i}}}) +6(A-\gyaku{A})\sum_{i<j}(\epsilon_i \epsilon_j-
\epsilon_j \epsilon_i) \\
&=-\sum^m_{i=1}\epsilon_i(\kukakko{\alpha_{P_i}\beta_{P_i}}-\kukakko{
 \alpha_{P_i}\gyaku{\beta_{P_i}}}).
\end{align*}
This finishes the proof.
\end{proof}

The following proposition plays a key role in  our calculations
of $\skein{\Sigma}/(\ker \epsilon)^2$.

\begin{prop}
For $x$ and $y \in \pi_1 (\Sigma)$, we have $2\kukakko{x} +2\kukakko{y} =
\kukakko{xy} +\kukakko{x\gyaku{y}}$.
\end{prop}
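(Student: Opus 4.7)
The plan is to compute $[L_x][L_y] + [L_y][L_x]$ in $\skein{\Sigma}/(\ker \epsilon)^2$ in two different ways and equate the results. For the setup, I would choose based immersed loops $\alpha, \beta$ at a point $p \in \Sigma$ representing $x, y$ such that $\alpha \cap \beta = \shuugou{p}$ is a single transverse intersection. Since $\Sigma$ has non-empty boundary, $\pi_1(\Sigma)$ is free, and one can realize $x$ and $y$ by routing each through its own set of parallel copies of a bouquet of generating loops, so that $\alpha$ and $\beta$ are disjoint away from the common basepoint. Lift $\alpha, \beta$ to framed tangle representatives $L_x, L_y \subset \Sigma \times I$ at distinct heights.

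The stacked diagrams for $[L_y][L_x]$ and $[L_x][L_y]$ then differ only in the over/under at the single crossing above $p$ and play the roles of $T(d_1)$ and $T(d_2)$ in (\ref{equation_kouten_sa}). The two smoothings $T(d_\infty), T(d_0)$ of this crossing are single loops whose projections represent the free conjugacy classes $\zettaiti{xy}$ and $\zettaiti{x\gyaku{y}}$, so I would take these as representatives $L_{xy}$ and $L_{x\gyaku{y}}$. Adding the two skein expansions yields
\[
[L_x][L_y] + [L_y][L_x] = (A + \gyaku{A})([L_{xy}] + [L_{x\gyaku{y}}]).
\]

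On the other hand, writing $[L_z] = -2 + ([L_z]+2)$ with $[L_z]+2 \in \ker \epsilon$, a direct expansion shows $[L_x][L_y] \equiv -2[L_x] - 2[L_y] - 4 \pmod{(\ker \epsilon)^2}$ and similarly for $[L_y][L_x]$. Using $A + \gyaku{A} \equiv -2$ modulo $(A+1)^2 \subset (\ker \epsilon)^2$, the identity above reduces to
\[
[L_{xy}] + [L_{x\gyaku{y}}] \equiv 2[L_x] + 2[L_y] + 4 \pmod{(\ker \epsilon)^2}.
\]
Both $L_{xy}$ and $L_{x\gyaku{y}}$ are obtained from the stacked diagram by smoothing the single extra crossing at $p$, so $w(L_{xy}) = w(L_{x\gyaku{y}}) = w(L_x) + w(L_y)$; substituting into $\kukakko{z} = [L_z] + 2 - 3 w(L_z)(A - \gyaku{A})$ gives $\kukakko{xy} + \kukakko{x\gyaku{y}} \equiv 2\kukakko{x} + 2\kukakko{y} \pmod{(\ker \epsilon)^2}$.

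The main technical point is the setup, namely arranging $\alpha \cap \beta$ to be a single transverse point so that the skein expansion reduces to one application of the relation and the writhe bookkeeping matches exactly. Without this, one would have to expand at several crossings and verify that the extra terms all lie in $(\ker \epsilon)^2$, which should still work but is combinatorially more involved.
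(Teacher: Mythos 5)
There is a genuine gap in your setup: the configuration you start from does not exist in general. If $\alpha$ and $\beta$ are transverse loops on $\Sigma$, the number of their intersection points is congruent mod $2$ to the $\Z/2$-homological intersection number $[\alpha]\cdot[\beta]$. Requiring $\alpha\cap\beta$ to be a \emph{single} transverse point therefore forces $[\alpha]\cdot[\beta]=1$ in $\Z/2$. This fails for every pair $x,y$ on a genus-$0$ surface (the case the paper actually uses in Sections 4--5, where the intersection form on $H_1$ vanishes identically), and it fails for $x=y$ on any surface (needed, e.g., to recover $\kukakko{(a-1)(a-1)}=2\kukakko{a}$). So ``routing through parallel copies of a bouquet'' cannot produce your picture: once $\beta$ crosses $\alpha$ transversally at the basepoint it must cross back somewhere else whenever the mod-$2$ intersection number is zero. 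Your closing remark that the multi-crossing case ``should still work but is combinatorially more involved'' is exactly where the content lies, and your key identity $[L_x][L_y]+[L_y][L_x]=(A+\gyaku{A})([L_{xy}]+[L_{x\gyaku{y}}])$ does not survive the presence of additional mutual crossings, since resolving those produces links with more components rather than the single knots $L_{xy}$, $L_{x\gyaku{y}}$.

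The paper's proof sidesteps this entirely and you should compare it with your attempt: it takes arbitrary transverse representatives $d_a$, $d_b$ crossing at $*$, and works with the single product $[T_a][T_b]$, in which $T_a$ lies entirely above $T_b$, so \emph{every} mutual crossing is already resolved as ``$a$ over $b$.'' Applying the skein relation only at the crossing over $*$ gives $[T_a][T_b]=A[T_{ab}]+\gyaku{A}[T_{a\gyaku{b}}]$ with $T_{ab}$, $T_{a\gyaku{b}}$ genuine knots representing $xy$ and $x\gyaku{y}$ no matter how many other intersection points there are. The place where you used the sum of the two orderings to kill the $A^{\pm1}$ coefficients is replaced by the rewriting $A[T_{ab}]+\gyaku{A}[T_{a\gyaku{b}}]=(A+1)([T_{ab}]+\gyaku{A}[T_{a\gyaku{b}}])-[T_{ab}]-[T_{a\gyaku{b}}]$, whose first term lies in $(\ker\epsilon)^2$; combined with $([T_a]+2)([T_b]+2)\in(\ker\epsilon)^2$ and the writhe identity $w(T_{ab})+w(T_{a\gyaku{b}})=2(w(T_a)+w(T_b))$ this yields the claim. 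Your algebra modulo $(\ker\epsilon)^2$ and the writhe bookkeeping are fine as far as they go; the repair is to abandon the single-intersection hypothesis and run the argument on $[T_a][T_b]$ resolved only at the basepoint.
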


\begin{proof}
We fix the base point $*$ of $\pi_1 (\Sigma)$ in $\Sigma \backslash
\partial \Sigma$.
Let $d_a$ and $d_b$ be knot diagrams in $\Sigma$
whose interesections consist of transverse double points and
which transverse at $*$ as shown in Figure \ref{fig_dxy_1}.

\begin{figure}[htbp]
	\begin{tabular}{cc}
	\begin{minipage}{0.33\hsize}
		\centering
\includegraphics[width=3cm]{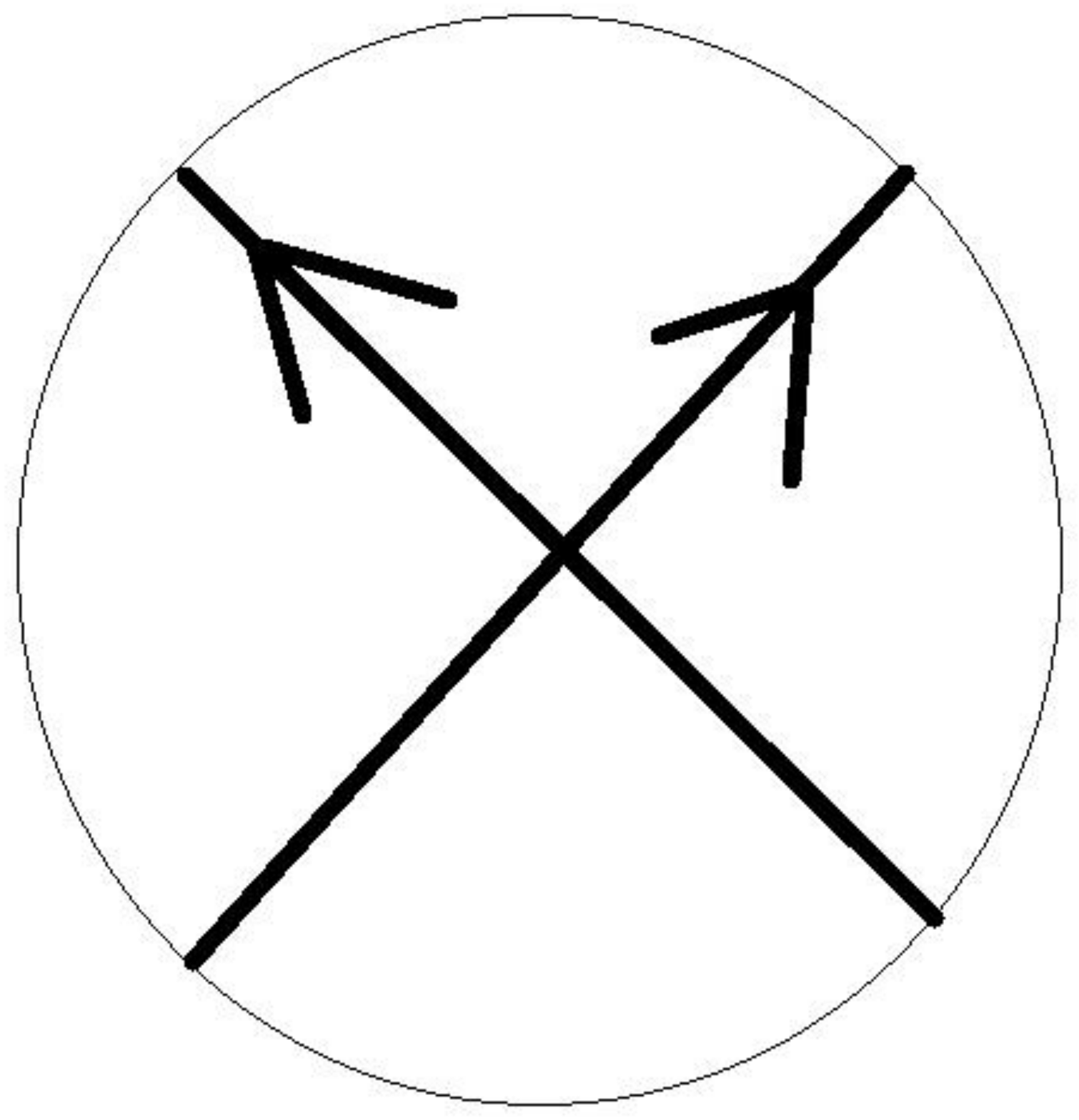}
\put(-30,25){$d_b$}
\put(-33,77){$d_a$}
\put(-39,50){$*$}
		\caption{}
		\label{fig_dxy_1}
	\end{minipage}
\begin{minipage}{0.33\hsize}
		\centering
\includegraphics[width=3cm]{smooth_1_PNG.pdf}
\put(-30,30){$d_b$}
\put(-30,85){$d_a$}
\put(-43,50){$*$}
		\caption{}
		\label{fig_dxy_2}
	\end{minipage}
     \begin{minipage}{0.33\hsize}
		\centering
		\includegraphics[width=3cm]{smooth_2_PNG.pdf}
\put(-30,30){$d_b$}
\put(-30,85){$d_a$}
\put(-43,50){$*$}
		\caption{}
		\label{fig_dxy_3}
	\end{minipage}
	\end{tabular}
\end{figure}

Let $d_{ab}$ and $d_{a\gyaku{b}}$ be two knot diagrams satisfying 
the following conditions.

\begin{itemize}
\item The two knot diagrams $d_{ab}$ and $d_{a\gyaku{b}}$
equal 
$d_a \cup d_b$ with the same height-information as $d_a$ and $d_b$
except for the neighborhood of the intersections of $d_a$ and $d_b$.
\item The branches of $d_{ab} $ and $d_{a \gyaku{b}}$ in the neighborhood
of any point of $d_a  \cap d_b \backslash \shuugou{*}$ belonging to
$d_a$ are over crossings.
\item The two tangle diagrams $d_{ab}$ and $d_{a\gyaku{b}}$
are as shown in Figure \ref{fig_dxy_2} and Figure \ref{fig_dxy_3},
respectively.
\end{itemize}

We denote $T_i \defeq T(d_i)$ for $i \in \shuugou{a,b,ab, a\gyaku{b}}$.
We remark that $w(T_{ab}) +w(T_{a\gyaku{b}}) =
2(w(T_a) +w(T_b))$.

Hence we have

\begin{align*}
&([T_a]+2)([T_b]+2) \\
&=[T_a][T_b] +2[T_a]+2[T_b]+4 \\
&=A[T_{ab}] +\gyaku{A}[T_{a\gyaku{b}}]+2[T_a]+2[T_b]+4 \\
&=(A+1)([T_{ab}]+\gyaku{A}[T_{a\gyaku{b}}])
-[T_{ab}]-[T_{a\gyaku{b}}]+2[T_a]+2[T_b]+4 \\
&=(A+1)([T_{ab}]+\gyaku{A}[T_{a\gyaku{b}}]) \\
&-([T_{ab}]+2-3(A-\gyaku{A})w(T_{ab}))
-([T_{a\gyaku{b}}]+2-3(A-\gyaku{A})w(T_{a\gyaku{b}})) \\
&+2([T_{a}]+2-3(A-\gyaku{A})w(T_{a}))
+2([T_{b}]+2-3(A-\gyaku{A})w(T_{b}))
\end{align*}

This proves the proposition.

\end{proof}

\subsection{The calculation lemma}

In this subsection, we check some equations.

\begin{lemm}
\label{lemm_calc_1}
For $a$, $b$, $c$ and $d \in \pi_1(\Sigma)$, we have
\begin{align}
&\kukakko{(a-1)(b-1)(c-1)} = \kukakko{-(b-1)(a-1)(c-1)}, \label{equation_calc_1}\\
&\kukakko{(a-1)(b-1)(c-1)(d-1)} =0, \label{equation_calc_2}\\
&\kukakko{(a-1)(b-1)(b-1)} = 0, \label{equation_calc_3}\\
&\kukakko{[a,b]c-c} = 2\kukakko{(a-1)(b-1)(c-1)}, \label{equation_calc_4}\\
&\kukakko{(a-1)(a-1)}=2\kukakko{a}. \label{equation_calc_5}
\end{align}
\end{lemm}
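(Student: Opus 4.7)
The plan is to derive all five identities from a single consequence of the preceding proposition, together with $\Q$-linearity of $\kukakko{\cdot}\colon\Q\pi_1(\Sigma)\to(\ker\epsilon)/(\ker\epsilon)^2$ and the cyclic identity $\kukakko{uv}=\kukakko{vu}$ (which follows from $\kukakko{x}=\kukakko{yx\gyaku{y}}$ extended by linearity). Write $\aug\colon\Q\pi_1(\Sigma)\to\Q$ for the group-algebra augmentation and set $I:=\ker\aug$. The proposition rewrites as
\begin{equation*}
\kukakko{(x-1)(y-1)}+\kukakko{(x-1)(\gyaku{y}-1)}=0\qquad(x,y\in\pi_1(\Sigma)),\tag{$\ast$}
\end{equation*}
and by $\Q$-linearity in the first factor this extends to
\begin{equation*}
\kukakko{z(y-1)}=-\kukakko{z(\gyaku{y}-1)}\qquad(z\in I,\ y\in\pi_1(\Sigma)).\tag{$\ast\ast$}
\end{equation*}

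Identity (\ref{equation_calc_5}) is immediate from $(\ast\ast)$ with $z=a-1$, $y=a$: compute $(a-1)(\gyaku{a}-1)=2-a-\gyaku{a}$ and use $\kukakko{\gyaku{a}}=\kukakko{a}$ and $\kukakko{1}=0$. Identity (\ref{equation_calc_3}) follows by applying $(\ast\ast)$ with $z=(a-1)(b-1)\in I$ and $y=b$; the identity $(b-1)(\gyaku{b}-1)=-(b-1)-(\gyaku{b}-1)$ then reduces the statement to a second application of $(\ast\ast)$ with $z=a-1$, which vanishes.

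The proof of (\ref{equation_calc_1}) is the main obstacle and uses a two-way computation of $\kukakko{\gyaku{a}\gyaku{b}c}$. First, successive applications of the proposition to the pairs $(\gyaku{a},bc)$, $(a,c)$, $(ac,b)$ express $\kukakko{\gyaku{a}\gyaku{b}c}$ in terms of $\kukakko{abc}$ and $\kukakko{\cdot}$ of shorter words. Second, cyclic invariance combined with $\gyaku{a}\gyaku{b}=\gyaku{ba}$ gives $\kukakko{\gyaku{a}\gyaku{b}c}=\kukakko{c\gyaku{ba}}$, and the proposition applied to $(c,ba)$ yields a second expression involving $\kukakko{bac}$. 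Equating the two gives
\begin{equation*}
\kukakko{abc}+\kukakko{bac}=2\bigl(\kukakko{ab}+\kukakko{ac}+\kukakko{bc}\bigr)-2\bigl(\kukakko{a}+\kukakko{b}+\kukakko{c}\bigr),
\end{equation*}
and a routine expansion of $(a-1)(b-1)(c-1)$ and $(b-1)(a-1)(c-1)$ in $\Q\pi_1(\Sigma)$ converts this into (\ref{equation_calc_1}).

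The remaining identities are then formal. For (\ref{equation_calc_2}), the $\Q$-trilinear functional $\tilde C(u,v,w):=\kukakko{uvw}$ on $\Q\pi_1(\Sigma)^{3}$ is cyclically symmetric by $\kukakko{uv}=\kukakko{vu}$, and extending (\ref{equation_calc_1}) $\Q$-trilinearly shows that $\tilde C$ is antisymmetric in its first two arguments on $I^{3}$, hence totally antisymmetric there. Since $wt,tv,vw\in I$ whenever $w,t,v\in I$, applying total antisymmetry to the grouping $\tilde C(u,v,wt)$ yields $\kukakko{uvwt}=-\kukakko{uwtv}$, and iterating once more forces $\kukakko{uvwt}=-\kukakko{uvwt}$, whence (\ref{equation_calc_2}). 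For (\ref{equation_calc_4}), I first verify $\kukakko{[a,b]}=0$ via the proposition (reducing $\kukakko{aba^{-1}b^{-1}}$ to $\kukakko{aba^{-1}b}$ and evaluating the latter using $(ab)(b^{-1}a)=a^{2}$), so that $\kukakko{([a,b]-1)c}=\kukakko{([a,b]-1)(c-1)}$. Writing $[a,b]=ab(ba)^{-1}$ and using $(ba)^{-1}\equiv 1\pmod{I}$ in $\Q\pi_1(\Sigma)$ yields
\begin{equation*}
[a,b]-1\equiv (a-1)(b-1)-(b-1)(a-1)\pmod{I^{3}},
\end{equation*}
so multiplication by $(c-1)\in I$ leaves an error in $I^{4}$, which is annihilated by $\kukakko{\cdot}$ in view of (\ref{equation_calc_2}) extended by multilinearity. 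The identity (\ref{equation_calc_4}) then follows from (\ref{equation_calc_1}).
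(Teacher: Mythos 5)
Your proof is correct and follows essentially the same route as the paper: every identity is reduced to the Proposition $2\kukakko{x}+2\kukakko{y}=\kukakko{xy}+\kukakko{x\gyaku{y}}$ via $\Q$-linearity, cyclic invariance and $\kukakko{\gyaku{x}}=\kukakko{x}$, and your central identity $\kukakko{abc}+\kukakko{bac}=2(\kukakko{ab}+\kukakko{ac}+\kukakko{bc})-2(\kukakko{a}+\kukakko{b}+\kukakko{c})$ is exactly the one the paper extracts in its chain of substitutions for (\ref{equation_calc_1}). The only quibbles are cosmetic: in (\ref{equation_calc_2}) the move $\kukakko{uvwt}\mapsto-\kukakko{uwtv}$ must be applied three times (not two) to reach $\kukakko{uvwt}=-\kukakko{uvwt}$, and the listed pairs in your first computation of $\kukakko{\gyaku{a}\gyaku{b}c}$ need slight adjustment, though the identity you equate is right.
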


\begin{proof}
We have

\begin{align*}
&\kukakko{abc}  \\
&=\kukakko{-\gyaku{b}\gyaku{a}c+2ab+2c}  \\
&=\kukakko{\gyaku{b}ac-2\gyaku{b}c-2a+2ab+2c}  \\
&=\kukakko{-bac+2ac+2b+2bc-4b-4c-2a+2ab+2c} \\
&=\kukakko{-bac+2ab+2bc+2ca-2a-2b-2c}.
\end{align*}

Hence we have 
$\kukakko{(a-1)(b-1)(c-1)} = \kukakko{-(b-1)(a-1)(c-1)}$.
This proves the equation (\ref{equation_calc_1}). By this equation, we have

\begin{align*}
&\kukakko{(a-1)(b-1)(c-1)(d-1)} \\
&=-\kukakko{(a-1)(d-1)(b-1)(c-1)}\\
&=\kukakko{(a-1)(b-1)(d-1)(c-1)}\\
&=-\kukakko{(a-1)(b-1)(c-1)(d-1)}.
\end{align*}

This proves the equation (\ref{equation_calc_2}).
By equation (\ref{equation_calc_1}). we have
\begin{align*}
&\kukakko{(a-1)(b-1)(b-1)} =-\kukakko{(a-1)(b-1)(b-1)}.
\end{align*}
This proves the equation (\ref{equation_calc_3}).
By the equation (\ref{equation_calc_1}), the equation (\ref{equation_calc_2})
and the equation (\ref{equation_calc_3}), we have

\begin{align*}
&\kukakko{[a,b]c-c} \\
&=\kukakko{(ab-ba)(\gyaku{a}\gyaku{b}c-1)+ab-ba} \\
&=\kukakko{(ab-ba)(\gyaku{a}\gyaku{b}c-1)} \\
&=\kukakko{((a-1)(b-1)-(b-1)(a-1))(\gyaku{a}\gyaku{b}c-1)} \\
&=\kukakko{2(a-1)(b-1)(\gyaku{a}\gyaku{b}c-1)} \\
&=-2\kukakko{(a-1)(b-1)(a-1)}-2\kukakko{(a-1)(b-1)(b-1)}
+2 \kukakko{(a-1)(b-1)(c-1)} \\
&=2\kukakko{(a-1)(b-1)(c-1)}\\.
\end{align*}
This proves the equation (\ref{equation_calc_4}).
We have
\begin{align*}
&\kukakko{a^2-2a+1}
=\kukakko{-1+2a+2a-2a+1}=2\kukakko{a}.
\end{align*}
This proves the equation (\ref{equation_calc_5}).
This finishes the proof.
\end{proof}

We denote $H_1 \defeq H_1(\Sigma,\Q)=\Q \otimes \pi_1(\Sigma)/[\pi_1(\Sigma),\pi_1(\Sigma)]$.
\begin{df}
A $\Q$-linear map $\lambda:H_1 \wedge H_1 \wedge H_1
\to \ker \epsilon/(\ker \epsilon)^2$
is defined by $[a] \wedge [b] \wedge [c]  \defeq \kukakko{(a-1)(b-1)(c-1)}$
for $a$, $b$ and $c \in \pi_1(\Sigma)$
where we denote the third exterior of $H_1$
by $H_1 \wedge H_1 \wedge H_1$.
\end{df}

By the equation (\ref{equation_calc_1}) and the equation (\ref{equation_calc_2}), we have $\lambda$ is 
well-defined.

Since the $\Q$ vector space $\skein{\Sigma}/(\ker \epsilon)^2$ is generated by
$\shuugou{1, A+1} \cup \kappa(\Q \pi_\square (\Sigma))$
as a $\Q$ vector space, 
we have the following 
by teh equation (\ref{equation_calc_1}), the equation (\ref{equation_calc_2}), the
equation (\ref{equation_calc_4})
and the equation (\ref{equation_calc_5}).

\begin{lemm}
\label{lemm_generator}
We assume $\pi_1(\Sigma)$ is generated by $\shuugou{x_1,x_2,\cdots,x_M}$.
Then $\skein{\Sigma}/(\ker \epsilon)^2$ is generated by
\begin{align*}
\shuugou{1} \cup \shuugou{A+1} \cup \shuugou{\kukakko{x_i,x_j}|i \leq j} \cup
\shuugou{\kukakko{x_i,x_j,x_k}|i < j < k}
\end{align*}
as a $\Q$ vector space.
Here we denote $\kukakko{x_i,x_j} \defeq \kukakko{(x_i-1)(x_j-1)}$
and $\kukakko{x_i,x_j,x_k} \defeq \kukakko{(x_i-1)(x_j-1)(x_k-1)}$.

\end{lemm}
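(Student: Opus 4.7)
The plan is to start from the generating set $\shuugou{1, A+1} \cup \kappa(\Q \pi_\square(\Sigma))$ for $\skein{\Sigma}/(\ker\epsilon)^2$ mentioned in the paragraph just above the lemma, and to show that $\kappa(\Q\pi_\square(\Sigma))$ lies in the $\Q$-span of $\shuugou{\kukakko{x_i,x_j} \mid i \leq j} \cup \shuugou{\kukakko{x_i,x_j,x_k} \mid i < j < k}$. Since $\kappa((x)_\square) = -\kukakko{x}$ by Theorem \ref{thm_kappa}, the task reduces to expressing each $\kukakko{x}$ for $x \in \pi_1(\Sigma)$ as a $\Q$-linear combination of these bracket generators.

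The first step is to use equation (\ref{equation_calc_2}) to factor through the augmentation ideal filtration. Writing $I = \ker(\Q\pi_1(\Sigma) \to \Q)$, the $\Q$-linear extension $\kukakko{\cdot} : \Q\pi_1(\Sigma) \to (\ker\epsilon)/(\ker\epsilon)^2$ annihilates $I^4$, and $\kukakko{1} = 0$ gives $\kukakko{x} = \kukakko{x - 1}$. Since $\Sigma$ has non-empty boundary, $\pi_1(\Sigma)$ is free on $x_1, \ldots, x_M$; from $x_j x_j^{-1} = 1$ one extracts $x_j^{-1} - 1 \equiv -(x_j - 1) + (x_j - 1)^2 - (x_j - 1)^3 \pmod{I^4}$, and the telescoping identity for a word in $x_j^{\pm 1}$ then writes $x - 1$ modulo $I^4$ as a $\Q$-linear combination of products $(x_{j_1} - 1) \cdots (x_{j_\ell} - 1)$ with $1 \leq \ell \leq 3$. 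Hence $\kukakko{x}$ is a $\Q$-linear combination of terms $\kukakko{x_{j_1}}$, $\kukakko{x_{j_1}, x_{j_2}}$, and $\kukakko{x_{j_1}, x_{j_2}, x_{j_3}}$ with arbitrary indices from $\shuugou{1, \ldots, M}$.

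The second step is to normalize the indices. For singletons, equation (\ref{equation_calc_5}) rewrites $\kukakko{x_j} = \tfrac{1}{2}\kukakko{x_j, x_j}$, absorbing them into the pair set. For pairs, expanding $\kukakko{(x_i-1)(x_j-1)} = \kukakko{x_i x_j} - \kukakko{x_i} - \kukakko{x_j}$ and using the conjugation invariance $\kukakko{x_i x_j} = \kukakko{x_j x_i}$ (which is immediate from $\kukakko{x} = \kukakko{yxy^{-1}}$) yields $\kukakko{x_i, x_j} = \kukakko{x_j, x_i}$, so only $i \leq j$ are needed. For triples, the analogous expansion $\kukakko{(a-1)(b-1)(c-1)} = \kukakko{abc} - \kukakko{ab} - \kukakko{bc} - \kukakko{ca} + \kukakko{a} + \kukakko{b} + \kukakko{c}$, combined with the cyclic conjugacy $abc \sim cab$, gives the cyclic symmetry $\kukakko{(a-1)(b-1)(c-1)} = \kukakko{(c-1)(a-1)(b-1)}$; together with the antisymmetry in the first two slots provided by equation (\ref{equation_calc_1}), this forces full alternation on the three slots, and equation (\ref{equation_calc_3}) kills the degenerate cases where two indices coincide. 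So only triples with $i < j < k$ survive.

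The main obstacle is nothing conceptual but the bookkeeping in the first step—the telescoping expansion of a word in $\pi_1(\Sigma)$ into products of $(x_j - 1)$'s modulo $I^4$, including the inverse substitution—together with the derivation of cyclic symmetry for the triple bracket from conjugation invariance. Once these routine checks are done, the three reductions glue together immediately and the claim follows.
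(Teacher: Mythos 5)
Your proposal is correct and follows essentially the same route as the paper, which simply asserts the lemma as a consequence of the generating set $\shuugou{1,A+1}\cup\kappa(\Q\pi_\square(\Sigma))$ together with the identities of Lemma \ref{lemm_calc_1}; your write-up supplies the bookkeeping (reduction modulo $I^4$ via (\ref{equation_calc_2}), then normalization via (\ref{equation_calc_5}), conjugation invariance, and the alternation coming from (\ref{equation_calc_1}) and (\ref{equation_calc_3})) that the paper leaves implicit. The only cosmetic difference is that you invoke (\ref{equation_calc_3}) and cyclic conjugacy where the paper cites (\ref{equation_calc_4}), but since (\ref{equation_calc_3}) is itself derived from (\ref{equation_calc_1}), this stays entirely within the paper's toolkit.
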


\section{A basis of $\skein{\Sigma}/(\ker \epsilon)^2$}
\label{section_basis}

In this section, we will prove the following.

\begin{thm}
\label{thm_basis}
Let $\Sigma$ be a connected compact oriented surface with non-empty boundary.
We assume $\pi_1(\Sigma)$ is freely generated by $\shuugou{x_1,x_2,\cdots,x_M}$.
Then 
\begin{align*}
\shuugou{1} \cup \shuugou{A+1} \cup \shuugou{\kukakko{x_i,x_j}|i \leq j} \cup
\shuugou{\kukakko{x_i,x_j,x_k}|i < j < k}
\end{align*}
is a basis of $\skein{\Sigma}/(\ker \epsilon)^2$
as a $\Q$ vector space.
\end{thm}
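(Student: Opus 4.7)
The plan is as follows. By Lemma \ref{lemm_generator} the set $\mathcal{B}$ already spans $\skein{\Sigma}/(\ker\epsilon)^2$, so what remains is linear independence. The main tool, previewed in the introduction, is the bilinear pairing $\vartheta:\skein{\Sigma}\times\skein{\Sigma}\to\Q[A^{\pm 1}]$ satisfying the filtration bound $\vartheta\bigl((\ker\epsilon)^n(\ker\epsilon)^m\bigr)\subseteq((A+1)^{n+m})$. The strategy is to use $\vartheta$ together with the augmentation $\epsilon$ to read off the coefficients in a hypothetical relation, one filtration stratum at a time.

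Suppose we have a relation
\begin{equation*}
a + b(A+1) + \sum_{i\le j} c_{ij}\kukakko{x_i,x_j} + \sum_{i<j<k} d_{ijk}\kukakko{x_i,x_j,x_k} = 0
\end{equation*}
in $\skein{\Sigma}/(\ker\epsilon)^2$. Applying $\epsilon$ kills every term except $a$, since $A+1$ and every element in the image of $\kappa$ lie in $\ker\epsilon$; hence $a=0$ and the remaining relation lies in $\ker\epsilon/(\ker\epsilon)^2$. Next I would exhibit, for each remaining basis element, a test vector $y\in\skein{\Sigma}$ such that $\vartheta(\,\cdot\,,y)$ extracts precisely its coefficient while sending every other basis vector to an ideal of strictly higher order in $(A+1)$.

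The argument splits by filtration degree in the graded pieces of $F^\bullet$ introduced in the introduction. The coefficients $d_{ijk}$ of the cubic generators live in the top stratum $F^3/F^4\simeq\wedge^3 H_1(\Sigma,\Q)$. Choosing $y$ to be a \emph{triple wedge} test element built from simple closed curves dual to triples $(x_i,x_j,x_k)$, and invoking the non-degeneracy of the induced pairing $\vartheta_3$ on this piece, forces every $d_{ijk}=0$. Once these are eliminated, what remains sits in $F^2/F^3\simeq\Q\oplus S^2 H_1(\Sigma,\Q)$, and an analogous test-element argument appealing to the non-degeneracy of $\vartheta_2$ forces $b=0$ together with every $c_{ij}=0$. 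Throughout, the filtration bound on $\vartheta$ ensures that pairing a test element of filtration degree $n$ against $\mathcal{B}$ sees only basis vectors at compatible levels, so terms of different filtration weight do not interfere.

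The principal obstacle is the construction of $\vartheta$ itself and the verification of its non-degeneracy together with the sharp filtration estimate; once these are in hand, the independence of $\mathcal{B}$ reduces to explicit evaluations of $\vartheta$ on the chosen test elements. A reassuring dimension count: $\mathcal{B}$ has cardinality $2+\tfrac{M(M+1)}{2}+\binom{M}{3}$, which matches $\dim\bigl(\Q\oplus\Q\oplus S^2 H_1(\Sigma,\Q)\oplus\wedge^3 H_1(\Sigma,\Q)\bigr)$, exactly the expected dimension of $F^0/F^4=\skein{\Sigma}/(\ker\epsilon)^2$ under the graded description announced in the introduction.
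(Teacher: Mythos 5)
Your reduction to linear independence via Lemma \ref{lemm_generator} and the idea of using $\vartheta$ together with its $(A+1)$-adic filtration bound are the right starting points, but the core of your independence argument has a genuine gap, and in one place it is actually wrong. The claim that ``terms of different filtration weight do not interfere'' fails in the direction you use it. The bound is $\vartheta(F^n\skein{\Sigma}\,F^m\skein{\Sigma})\subset((A+1)^{\gauss{\frac{n+m+1}{2}}})$, so pairing the quadratic part of a relation (in $F^2$) against a cubic test element (in $F^3$) lands in $((A+1)^3)$ --- exactly the stratum where you propose to read off the coefficients $d_{ijk}$. This is not just a worry about the bound being non-sharp: Lemma \ref{lemm_naiseki}(5) shows $\vartheta(\kukakko{i_1,j_1,k_1}\kukakko{i_2,j_2})\equiv -144(A+1)^3 \bmod ((A+1)^4)$ whenever $\shuugou{i_2,j_2}\subset\shuugou{i_1,j_1,k_1}$, so the $c_{ij}$-terms genuinely contaminate the coefficient of $(A+1)^3$ and your first step does not isolate $d_{ijk}$. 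The elimination must go in the opposite order: first kill the degree-$2$ coefficients by looking at the coefficient of $(A+1)^2$ (where the cubic terms cannot contribute, since $\vartheta(F^{\geq 2}F^3)\subset((A+1)^3)$), and only then read off the cubic coefficients at order $(A+1)^3$. A second, related gap: the non-degeneracy of $\vartheta_2$ and $\vartheta_3$ that you invoke is not available in advance --- in the paper it is a \emph{consequence} of the independence proof, which rests on the explicit Gram-matrix computations of Lemma \ref{lemm_naiseki}; deferring to it here is circular, and you never actually construct the dual test vectors, which would require inverting a Gram matrix that is not diagonal (e.g.\ $\vartheta(\kukakko{i_1,i_1}\kukakko{i_2,i_2})\equiv 144(A+1)^2$ even for $i_1\neq i_2$).

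The paper sidesteps test vectors entirely: it pairs the hypothetical relation $X\in(\ker\epsilon)^2$ with \emph{itself}. Since $\vartheta(X^2)\in((A+1)^4)$, the coefficient of $(A+1)^2$ must vanish, and Lemma \ref{lemm_naiseki} shows (after correcting $\kukakko{i,i}$ by $-12(A+1)$) that this coefficient is the positive-definite form $q^2+\sum_i 96q_i^2+\sum_{i<j}48q_{ij}^2$ over $\Q$, forcing those coefficients to vanish; the coefficient of $(A+1)^3$ then becomes $\sum 192 q_{ijk}^2$, killing the rest. Positivity of a sum of rational squares replaces your duality argument. Finally, you omit the reduction step: the explicit curves and the evaluation of $\vartheta$ as a Kauffman bracket of concrete links in $S^3$ are carried out on $\Sigma_{0,b+1}$, and the general case follows only via Theorem \ref{thm_independent} ($(\ker\epsilon)^n$ depends only on the $3$-manifold $\Sigma\times I$, a handlebody). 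Your dimension count at the end is consistent but presupposes the graded description it is meant to confirm.
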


To prove this theorem, we introduce a bilinear form of 
the Kauffman bracket skein algebra.
Let $\xi$ be an element of the mapping class group 
$\mathcal{M} (\Sigma)$ represented by a diffeomorphism $\chi$ such that
$S^3 \simeq \Sigma \times I/ \sim_\xi$,
where the equivalence relation $\sim_\xi$ is
generated by $(x,0)  \sim_\xi (\chi(x),1) $ for $x \in \Sigma$ and
$(x,t) \sim_\xi (x,1-t)$ for $x  \in \partial \Sigma$ and $t \in I$.
We remark that for any compact connected oriented surface $\Sigma$
with non-empty boundary
there exists an element of $\mathcal{M} (\Sigma)$ satisfying the above
condition.
The embedding $e_\xi :\Sigma \times I \to \Sigma \times I/\sim_\xi \simeq S^3:
(x,t) \mapsto (x, \frac{t}{2})$ induces a $\Q[A^{\pm1}]$-module homomorphism
$\vartheta_\xi : \skein{\Sigma} \to \Q [A^{\pm1}]:
[L] \mapsto \mathcal{K} (e_\xi(L))$
 where $\mathcal{K}$ is Kauffman bracket.
In this paper, the Kauffman bracket is defined by $[L] =
\mathcal{K}(L) [\emptyset]$ for $L
\in$ (the set of unoriented framed links in $S^3$) $=
\mathcal{T} (I \times I)$. We remark that
$\vartheta_\xi (xy) =\vartheta_\xi (\xi(y)x)$.
By \cite{TsujiCSAI} Lemma 5.7 and 
Lemma 5.8, we have the followings.

\begin{prop}
For any $x \in \skein{\Sigma}\backslash \shuugou{0}$, there exists $y$ such that 
$\vartheta_\xi (xy) \neq 0$.

\end{prop}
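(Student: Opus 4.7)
The plan is to prove non-degeneracy of $\vartheta_\xi(xy)$ by constructing, for any nonzero $x\in\skein{\Sigma}$, an explicit $y$ that pairs non-trivially with $x$. The strategy is to reduce the question to a combinatorial statement about a distinguished basis of $\skein{\Sigma}$ and a ``dual family'' coming from the Heegaard splitting $S^3=(\Sigma\times I)\cup_\xi(\Sigma\times I)$.

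First I would invoke Przytycki's basis theorem: $\skein{\Sigma}$ is a free $\Q[A^{\pm1}]$-module with basis $\mathcal{L}$ given by isotopy classes of essential simple multicurves on $\Sigma$ (no component bounding a disk, no two parallel components). Any nonzero $x$ then expands uniquely as $x=\sum_{L\in\mathcal{L}}a_L L$ with finitely many nonzero coefficients $a_L\in\Q[A^{\pm1}]$. Since $\Sigma$ has non-empty boundary, $\Sigma\times I$ is a genus-$g$ handlebody and the embedding $e_\xi$ realizes $S^3$ as the double of this handlebody, so $\vartheta_\xi(xy)$ is the Hopf-type pairing of the handlebody skein module with itself through this Heegaard splitting.

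Next I would introduce a complexity function on $\mathcal{L}$, for example the lexicographic vector of geometric intersection numbers with a fixed complete system of co-core disks of the ambient handlebody. For each $L\in\mathcal{L}$ I would build a dual multicurve $L^{\ast}\in\mathcal{L}$ whose image under $e_\xi$ lies in the complementary handlebody and whose components bound meridian disks capping off the components of $L$. By direct computation using the Kauffman bracket of unknot unions, one then checks that $\vartheta_\xi(L\cdot L^{\ast})$ is a nonzero Laurent polynomial in $A$, and that $\vartheta_\xi(L'\cdot L^{\ast})=0$ whenever $L'$ strictly exceeds $L$ in the complexity ordering. Equivalently, the infinite matrix $M_{L,L'}\defeq\vartheta_\xi(L\cdot (L')^{\ast})$ is upper triangular with nonzero diagonal. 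Letting $L_0$ be the maximal-complexity multicurve in the support of $x$ and setting $y\defeq L_0^{\ast}$, the sum $\vartheta_\xi(xy)=\sum_L a_L\vartheta_\xi(L\cdot L_0^{\ast})$ collapses to $a_{L_0}\vartheta_\xi(L_0\cdot L_0^{\ast})$, which is nonzero since $\Q[A^{\pm1}]$ is an integral domain.

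The main obstacle is the construction of the dual family $\{L^{\ast}\}_{L\in\mathcal{L}}$ with the required triangularity property. This amounts to choosing $\xi$ carefully (equivalently, picking a complete system of meridian disks for the ambient handlebody that is compatible with the chosen complexity) and then running a geometric induction on the complexity: one must show that after the identification $\Sigma\times I/\sim_\xi\cong S^3$, any multicurve $L'$ of complexity strictly greater than $L$ links $L^{\ast}$ in a way that contains a contractible loop, forcing $\vartheta_\xi(L'\cdot L^{\ast})=0$ via the trivial-knot and skein relations, while $L\cdot L^{\ast}$ reduces cleanly to a union of disjoint unknots of controlled framing. Once this triangular structure is in place, non-degeneracy is purely formal.
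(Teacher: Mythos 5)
The paper does not actually prove this proposition here: it is quoted from \cite{TsujiCSAI} (Lemmas 5.7 and 5.8), so the only thing to assess is whether your argument stands on its own. It does not: the key structural claim is false.

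Your first step (Przytycki's theorem that $\skein{\Sigma}$ is a free $\Q[A^{\pm1}]$-module on isotopy classes of multicurves) is fine, but the triangularity claim --- that one can choose duals $L^{\ast}$ with $\vartheta_\xi(L'\cdot L^{\ast})=0$ whenever $L'$ strictly exceeds $L$ in complexity --- fails already in the simplest case. Take $\Sigma=\Sigma_{0,2}$ an annulus, so $\skein{\Sigma}=\Q[A^{\pm1}][z]$ with $z$ the core curve, $\Sigma\times I$ a solid torus, and $e_\xi$ the genus-one Heegaard splitting of $S^3$. Every candidate $y$ is a polynomial in $z$, and $\vartheta_\xi(z^{n}\cdot z^{m})$ is the Kauffman bracket of an $(n,m)$-cabling of a (framed) Hopf link. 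These brackets are nonzero Laurent polynomials for \emph{all} $n,m$ (e.g.\ $\vartheta_\xi(z\cdot 1)=\pm A^{\pm3}(-A^2-A^{-2})\neq 0$), so the pairing matrix is a full Hankel-type matrix, not a triangular one; no choice of ``dual'' $z^{k}$ kills the higher-complexity terms. Relatedly, a multicurve sitting in a slice $\Sigma\times\shuugou{t}$ that bounds meridian disks of the handlebody $\Sigma\times I$ itself would have to bound disks in $\Sigma$, hence be a scalar in $\skein{\Sigma}$; the meridian disks necessarily live in the complementary handlebody, and then your mechanism for forcing $\vartheta_\xi(L'\cdot L^{\ast})=0$ (a contractible loop appearing after isotopy) has no reason to occur --- the bracket of a geometrically nontrivial link is essentially never zero over $\Q[A^{\pm1}]$.

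The standard repair, and the shape of the argument in \cite{TsujiCSAI}, is to replace literal triangularity by an extremal-degree argument: for a suitable complexity on multicurves one shows that the top (or bottom) degree in $A$ of $\vartheta_\xi(L\cdot L_0)$ is a strictly monotone function of the complexity of $L$, so that in $\vartheta_\xi(xL_0)=\sum_L a_L\vartheta_\xi(L\cdot L_0)$ the extremal-degree monomial contributed by the maximal $L$ in the support of $x$ cannot be cancelled. Alternatively one can pass to $\Q(A)$ and diagonalize the pairing using Jones--Wenzl idempotents, where non-degeneracy reduces to the invertibility of the colored Hopf link matrix. Either route requires genuinely different input from the disjointness/unknotting mechanism you propose.
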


\begin{lemm}
\label{lemm_e_filt}
We have $\vartheta_\xi
 ((\ker \epsilon)^N) \subset ((A+1)^N)\Q [A,\gyaku{A}]$ for any $N$.
\end{lemm}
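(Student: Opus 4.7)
My plan is to induct on $N$, with everything hinging on a compatibility $\vartheta_\xi \equiv \epsilon$ modulo $(A+1)$.

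\textbf{Base case ($N=1$).} I would first show that $\vartheta_\xi(x)|_{A = -1} = \epsilon(x)$ for every $x \in \skein{\Sigma}$, which immediately gives $\vartheta_\xi(\ker \epsilon) \subset (A+1)\Q[A,\gyaku{A}]$. Both sides are linear in $x$, so it suffices to take $x = [L]$. The assertion then reduces to the elementary identity $\mathcal{K}(L')|_{A = -1} = (-2)^{|L'|}$ for any link $L' \subset S^3$: at $A = -1$ the skein relation gives $\mathcal{K}(L'_1) = -\mathcal{K}(L'_\infty) - \mathcal{K}(L'_0) = \mathcal{K}(L'_2)$, so crossing changes do not affect $\mathcal{K}|_{A=-1}$, and any link reduces to a trivial $|L'|$-component link whose bracket $(-A^2 - \gyaku{A}^2)^{|L'|}$ evaluates to $(-2)^{|L'|}$ at $A = -1$.

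\textbf{Inductive step.} By $\Q[A,\gyaku{A}]$-linearity, $\ker \epsilon$ is generated as a $\Q[A,\gyaku{A}]$-module by $\{A+1\} \cup \{(-2)^{-|L|}[L] - 1 : L \in \mathcal{T}(\Sigma)\}$; in a product generator of $(\ker\epsilon)^N$, any $(A+1)$-factor pulls out cleanly, so the key case is
\[
\prod_{i=1}^N \bigl((-2)^{-|L_i|}[L_i] - 1\bigr) = \sum_{S \subseteq \{1,\ldots,N\}}(-1)^{N-|S|}(-2)^{-|L_S|}[L_S],
\]
where $L_S = \bigsqcup_{i \in S} L_i \subset \Sigma \times I$ is the stacked disjoint union and $|L_S| = \sum_{i \in S}|L_i|$. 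Writing $\mathcal{K}(e_\xi(L_S)) = \sum_{k \geq 0} c_k(L_S)(A+1)^k$ as a Taylor expansion at $A = -1$ and applying $\vartheta_\xi$, the lemma reduces to the combinatorial vanishings
\[
\sum_{S \subseteq \{1,\ldots,N\}}(-1)^{N-|S|}(-2)^{-|L_S|} c_k(L_S) = 0 \qquad (0 \leq k < N).
\]
The $k = 0$ case follows from the base case together with $(1-1)^N = 0$.

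\textbf{Main obstacle.} The higher-order vanishings are the heart of the matter: they say that each Taylor coefficient $c_k$ of the Kauffman bracket at $A = -1$ is a finite type invariant of order $\leq k$ in the sense of Le \cite{Le1996}, so the $N$-fold alternating sum over the component decomposition annihilates $c_k$ whenever $N > k$. I expect to prove these by bootstrapping: use Lemma 5.3 of \cite{TsujiCSAI} to exhibit the alternating sums $\sum_{L' \subset L}(-1)^{|L'|}(-2)^{-|L'|}[L']$ as explicit elements of high powers of $\ker \epsilon$, then feed these through the inductive hypothesis on $\vartheta_\xi$ to extract the required cancellations, which is where Lemmas 5.7 and 5.8 of \cite{TsujiCSAI} are used.
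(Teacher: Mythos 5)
Your base case is correct, and your reduction of the inductive step to the vanishing of the alternating sums $\sum_{S}(-1)^{N-|S|}(-2)^{-|L_S|}c_k(L_S)$ for $k<N$ is a faithful restatement of what must be proved; but the step you yourself flag as the ``main obstacle'' is the entire content of the lemma, and the bootstrap you propose for it does not close. Lemma 5.3 of \cite{TsujiCSAI} only places the alternating sum in a power of $\ker\epsilon$ \emph{inside the skein module}; converting membership in $(\ker\epsilon)^{N-1}$ into divisibility of the $\vartheta_\xi$-value by a power of $(A+1)$ is exactly the present lemma, and even granting the inductive hypothesis $\vartheta_\xi((\ker\epsilon)^{N-1})\subset((A+1)^{N-1})$ you only kill the coefficients $c_k$ with $k\le N-2$: the critical coefficient $c_{N-1}$ is untouched. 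Splitting the sum over $S\not\ni N$ and $S\ni N$ makes this explicit --- the first half is handled by the inductive hypothesis, but the second half is the same alternating sum over $\{1,\dots,N-1\}$ for the auxiliary invariant $L'\mapsto(-2)^{-|L_N|}c_k(L'\sqcup L_N)$, whose order the inductive hypothesis says nothing about. Nor can you recover the missing power of $(A+1)$ by factoring $\vartheta_\xi$ over the product $y_N\cdot\prod_{i<N}y_i$, since $\vartheta_\xi$ is not multiplicative (the paper only records $\vartheta_\xi(xy)=\vartheta_\xi(\xi(y)x)$).

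The missing input is geometric and uses that the target of $e_\xi$ is $S^3$. The identity (\ref{equation_kouten_sa}) shows that a single crossing change alters the bracket by an element of $(A-\gyaku{A})\Q[A,\gyaku{A}]\subset((A+1))$, and in $S^3$ every link can be split and trivialized by finitely many crossing changes; so each factor $(-2)^{-|L_i|}[L_i]-1$ can be resolved at the cost of one power of $(A+1)$ (a trivial component contributes $(-2)^{-1}(-A^2-A^{-2})-1=\frac{1}{2}(A-\gyaku{A})^2\in((A+1)^2)$ outright), and iterating over the $N$ factors yields $((A+1)^N)$. This is precisely the content of \cite{TsujiCSAI} Lemma 5.8, which is all the paper itself offers as proof of this statement --- it cites that lemma rather than reproving it. So your write-up must either import that lemma as a black box or supply the crossing-change argument; as it stands, the ``bootstrapping'' paragraph is circular.
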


Let $\Sigma$ be a connected compact oriented surface of genus $0$ with
$b+1$ boundary components.
Let $r_i$ be an element of $\pi_1 (\Sigma)$ as in Figure \ref{figure_r_i} 
for $1 \leq i \leq b$
and $c_{i_1 i_2 \cdots i_j}$ a simple closed curve presenting by
$\zettaiti{r_{i_1} \cdots r_{i_j}}$ for $1 \leq i_1<i_2< \cdots<i_j \leq b$.
We remark that $\Sigma \times I/ \sim_{t_{c_1}t_{c_2} \cdots t_{c_b}} \simeq
S^3$ and that  the embedding $e_{t_{c_1}t_{c_2} \cdots t_{c_b}}$
is as in Figure \ref{figure_e}.
We simply denote $\vartheta \defeq \vartheta_{t_{c_1}t_{c_2} \cdots t_{c_b}}$.
We remark that $\vartheta (xy) =\vartheta(yx)$.

\begin{figure}
\begin{flushleft}
\begin{picture}(200,150)
\put(0,-160){\includegraphics[width=360pt]{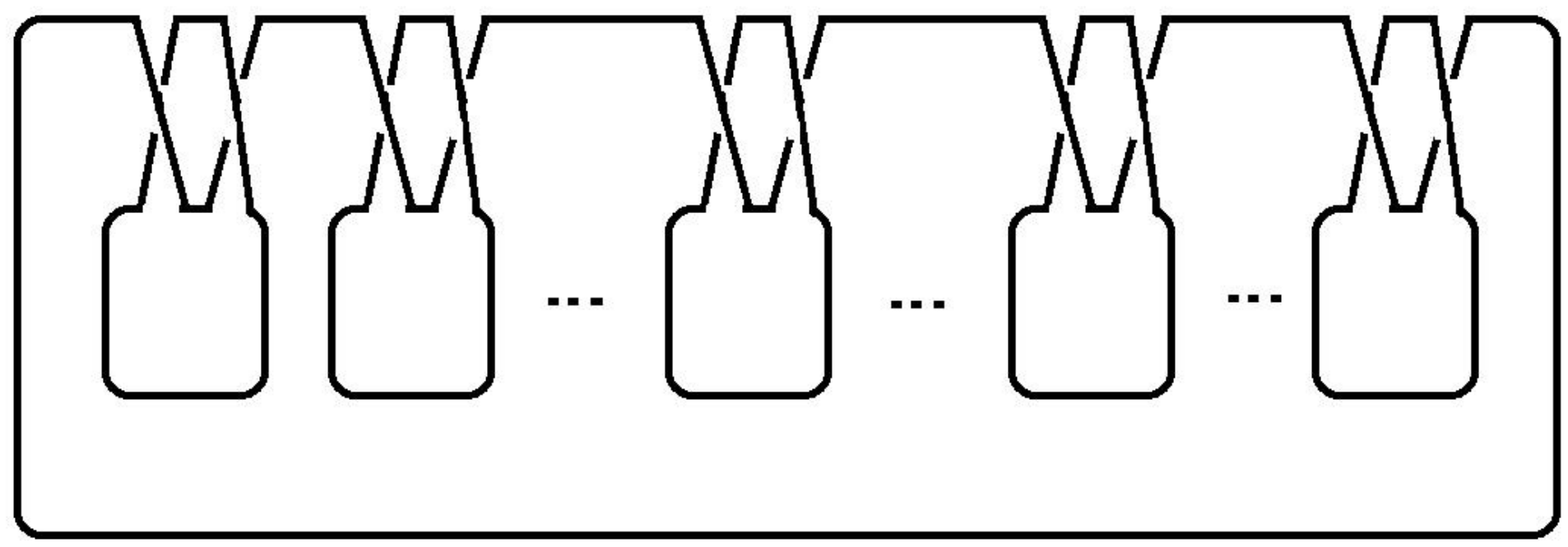}}
\end{picture}
\end{flushleft}
\caption{$e: \Sigma \times I \to S^3$}
\label{figure_e}
\end{figure}

\begin{figure}
\begin{flushleft}
\begin{picture}(200,150)
\put(0,-170){\includegraphics[width=360pt]{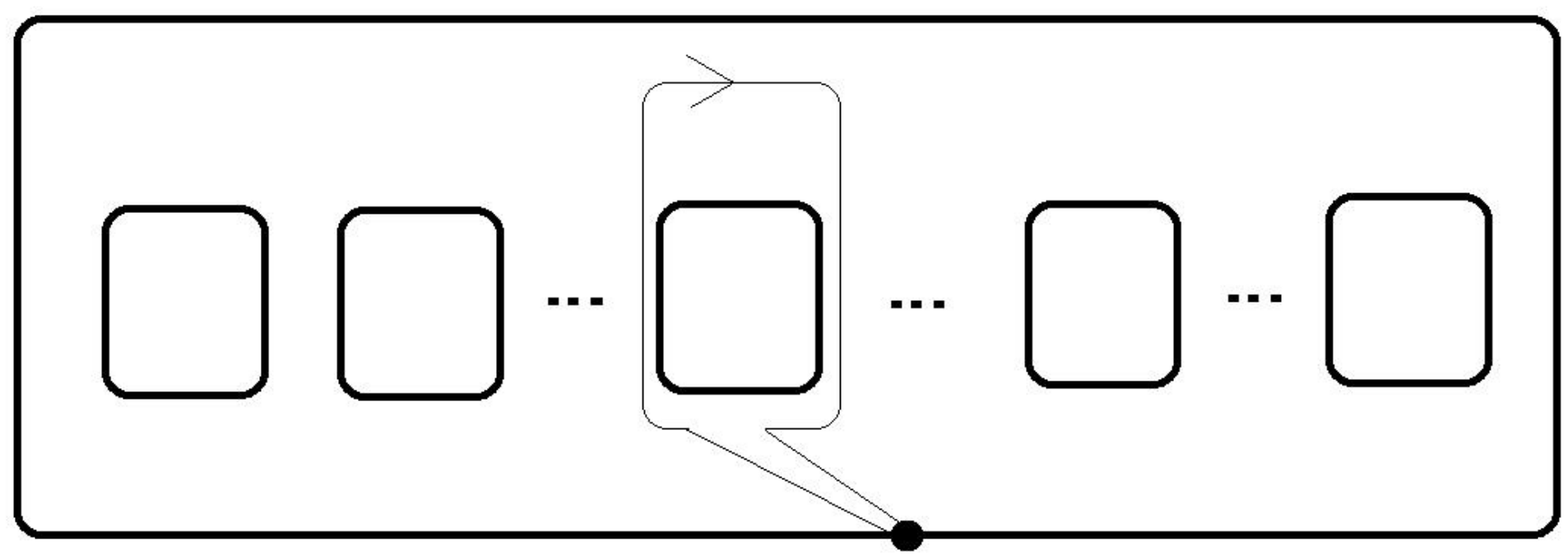}}
\put(45,30){$1$}
\put(160,30){$i$}
\put(160,92){$r_i$}
\put(300,30){$b$}
\end{picture}
\end{flushleft}
\caption{$r_{i}$}
\label{figure_r_i}
\end{figure}

We denote by $c$ an element represented by a tangle presented 
by simple closed curve $c
\in \shuugou{c_{i_1 i_2 \cdots i_j|1 \leq i_1<i_2< \cdots<i_j \leq b}}$,
$\kukakko{i,i} \defeq 2 (c_i -c_\emptyset)$ for $1 \leq i \leq b$, 
$\kukakko{i,j} \defeq
c_{ij}-c_{i}-c_{j}+c_\emptyset$ for $1 \leq i<j \leq b$ and 
$\kukakko{i,j,k} \defeq c_{ijk}-c_{ij}-c_{jk}-c_{ik}+
c_i+c_j+c_k-c_\emptyset$ for $1 \leq i <j<k \leq b$
where $c_\emptyset \defeq -A^2-A^{-2}$.
We need some claculations of $\vartheta$.

\begin{lemm}
\label{lemm_naiseki}
\begin{enumerate}
\item Let $i_1,i_2$  be elements of $\shuugou{1, \cdots, b}$.
We have
\begin{align*}
&\vartheta (\kukakko{i_1,i_1}\kukakko{i_2,i_2}) \\
&
\begin{cases}
=4(A^{12}+A^{8}+2A^7+2A^4+4A^3+3+2A^{-1}+A^{-4})
&\text{$i_1 =i_2$} \\
=4(A^3+1)^2(A^2+A^{-2})^2
&\text{$i_1 \neq i_2$},
\end{cases}
\end{align*}
and
\begin{align*}
&\vartheta (\kukakko{i_1,i_1}\kukakko{i_2,i_2}) =
\begin{cases}
240(A+1)^2 \mod ((A+1)^3)
&\text{$i_1 =i_2$} \\
144(A+1)^2 \mod ((A+1)^3)
&\text{$i_1 \neq i_2$}.
\end{cases}
\end{align*}
\item Let $i_1,j_1,i_2$ be elements of $\shuugou{1, \cdots, b}$ satisfying
$i_1 <j_1$. 
We have
\begin{align*}
&\vartheta (\kukakko{i_1,j_1}\kukakko{i_2,i_2}) \\
&
\begin{cases}
=-2(A^3+1)(A^{12}+A^{8}+2A^7+2A^4+4A^3+3+2A^{-1}+A^{-4})
&\text{$i_2 \in \shuugou{i_1,j_1}$} \\
=-2(A^3+1)^3(A^2+A^{-2})^2
&\text{$i_2 \notin \shuugou{i_1,j_1}$},
\end{cases}
\end{align*}
and
\begin{align*}
\vartheta (\kukakko{i_1,j_1}\kukakko{i_2,i_2}) =0 \mod ((A+1)^3).
\end{align*}
\item Let $i_1,j_1,k_1,i_2$ be elements of $\shuugou{1, \cdots, b}$ satisfying
$i_1 <j_1<k_1$. 
We have
\begin{align*}
&\vartheta (\kukakko{i_1,j_1,k_1}\kukakko{i_2,i_2}) \\
&
\begin{cases}
=2(A^3+1)^2(A^{12}+A^{8}+2A^7+2A^4+4A^3+3+2A^{-1}+A^{-4})
&\text{$i_2 \in \shuugou{i_1,j_1,k_1}$} \\
=2(A^3+1)^4(A^2+A^{-2})^2
&\text{$i_2 \notin \shuugou{i_1,j_1,k_1}$},
\end{cases}
\end{align*}
and
\begin{align*}
\vartheta (\kukakko{i_1,j_1,k_1}\kukakko{i_2,i_2}) =
0 \mod ((A+1)^4).
\end{align*}
\item
Let $i_1,j_1,i_2,j_2$ be elements of $\shuugou{1, \cdots,b}$
satisfying $i_1<j_1$ and $i_2 <j_2$.
We have
\begin{align*}
&\vartheta (\kukakko{i_1,j_1}\kukakko{i_2,j_2}) \\
&
\begin{cases}
=(A^3+1)^4(A^2+A^{-2})^2
&\text{$\sharp (\shuugou{i_1,j_1} \cap \shuugou{i_2,j_2}) =0$} \\
=(A^3+1)^2(A^{12}+A^{8}+2A^7+2A^4+4A^3+3+2A^{-1}+A^{-4})
&\text{$\sharp (\shuugou{i_1,j_1} \cap \shuugou{i_2,j_2}) =1$} \\
=A^{20}+A^{16}+4A^{15}+3A^{12}+4A^{11}+4A^{10}+2A^8+8A^7  \\
\ \ +8A^6+3A^4+12A^3+4A^2+5+4A^{-1}+A^{-4}
&\text{$\sharp (\shuugou{i_1,j_1} \cap \shuugou{i_2,j_2}) =2$},
\end{cases}
\end{align*}
and
\begin{align*}
&\vartheta (\kukakko{i_1,i_1}\kukakko{i_2,i_2}) =
\begin{cases}
0 \mod ((A+1)^3)
&\text{$\sharp (\shuugou{i_1,j_1} \cap \shuugou{i_2,j_2}) =0,1$} \\
48(A+1)^2 \mod ((A+1)^3)
&\text{$\sharp (\shuugou{i_1,j_1} \cap \shuugou{i_2,j_2}) =2$}.
\end{cases}
\end{align*}
\item
Let $i_1,j_1,k_1,i_2,j_2$ be elements of $\shuugou{1, \cdots,b}$
satisfying $i_1<j_1<k_1$ and $i_2 <j_2$.
We have
\begin{align*}
&\vartheta (\kukakko{i_1,j_1,k_1}\kukakko{i_2,j_2}) \\
&
\begin{cases}
=-(A^3+1)^5(A^2+A^{-2})^2
&\text{$\sharp (\shuugou{i_1,j_1,k_1} \cap \shuugou{i_2,j_2}) =0$} \\
=-(A^3+1)^3(A^{12}+A^{8}+2A^7+2A^4+4A^3+3+2A^{-1}+A^{-4})
&\text{$\sharp (\shuugou{i_1,j_1,k_1} \cap \shuugou{i_2,j_2}) =1$} \\
=-(A^3+1)(A^{20}+A^{16}+4A^{15}+3A^{12}+4A^{11}+4A^{10}+2A^8+8A^7  \\
\ \ +8A^6+3A^4+12A^3+4A^2+5+4A^{-1}+A^{-4})
&\text{$\sharp (\shuugou{i_1,j_1,k_1} \cap \shuugou{i_2,j_2}) =2$}, \\
\end{cases}
\end{align*}
and
\begin{align*}
&\vartheta (\kukakko{i_1,j_1,k_1}\kukakko{i_2,j_2}) =
0 \mod ((A+1)^3)
\end{align*}
\item
Let $i_1,j_1,k_1,i_2,j_2,k_2$ be elements of $\shuugou{1, \cdots,b}$
satisfying $i_1<j_1<k_1$ and $i_2 <j_2<k_2$.
We have
\begin{align*}
&\vartheta (\kukakko{i_1,j_1,k_1}\kukakko{i_2,j_2,k_2}) \\
&
\begin{cases}
=(A^3+1)^6(A^2+A^{-2})^2
&\text{$\sharp (\shuugou{i_1,j_1,k_1} \cap \shuugou{i_2,j_2,k_2}) =0$} \\
=(A^3+1)^4(A^{12}+A^{8}+2A^7+2A^4+4A^3+3+2A^{-1}+A^{-4})
&\text{$\sharp (\shuugou{i_1,j_1,k_1} \cap \shuugou{i_2,j_2,k_2}) =1$} \\
=(A^3+1)^2(A^{20}+A^{16}+4A^{15}+3A^{12}+4A^{11}+4A^{10}+2A^8+8A^7 \\
\ \ +8A^6+3A^4+12A^3+4A^2+5+4A^{-1}+A^{-4})
&\text{$\sharp (\shuugou{i_1,j_1,k_1} \cap \shuugou{i_2,j_2,k_2}) =2$} \\
=A^{28}+A^{24}+6A^{23}+4A^{20}+6A^{19}+12A^{18}+3A^{16}+18A^{15} \\
\ \ +12A^{14}+8A^{13}+6A^{12}+12A^{11}+24A^{10}+16A^9 +3A^8+18A^7\\
\ \ +36A^6+8A^5+4A^4
+30A^3+12A^2+9+6A^{-1}+A^{-4}
&\text{$\sharp (\shuugou{i_1,j_1,k_1} \cap \shuugou{i_2,j_2,k_2}) =3$},
\end{cases}
\end{align*}
and
\begin{align*}
&\vartheta (\kukakko{i_1,i_1}\kukakko{i_2,i_2}) =
\begin{cases}
0 \mod ((A+1)^4)
&\text{$\sharp (\shuugou{i_1,j_1,k_1} \cap \shuugou{i_2,j_2,k_2})=0,1,2$} \\
192(A+1)^3 \mod ((A+1)^4)
&\text{$\sharp (\shuugou{i_1,j_1,k_1} \cap \shuugou{i_2,j_2,k_2}) =3$}.
\end{cases}
\end{align*}
\end{enumerate}
\end{lemm}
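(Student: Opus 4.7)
The argument is a direct computation, organized around the embedding $e = e_{t_{c_1} \cdots t_{c_b}}$ of Figure \ref{figure_e}. For each subset $S \subset \shuugou{1,\ldots,b}$, the simple closed curve $c_S$ lifts to an unknotted curve in $\Sigma \times I$ which, under $e$, becomes an explicit framed unknot in $S^3$ whose framing is controlled by the Dehn twists $t_{c_i}$ appearing in $\xi$. For a product $c_S \cdot c_T$ we push one factor to a higher $I$-level; its image under $e$ is a $2$-component framed link in $S^3$ whose isotopy class depends only on the three sets $S \setminus T$, $T \setminus S$, and $S \cap T$.

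First I would compute $\vartheta(c_S c_T)$ for all such pairs by direct application of the Kauffman bracket skein relation in $S^3$. Because the $b$ holes contribute independently to the Heegaard gluing, the bracket factorizes hole-by-hole, and each local factor is a small tangle computation in $\Q[A^{\pm 1}]$ of three possible types ("in $S$ only", "in $T$ only", "in both"). Assembling these gives a closed formula for $\vartheta(c_S c_T)$ in every case, from which the six formulas of the lemma follow by expanding
\begin{align*}
\kukakko{i,i} &= 2(c_i - c_\emptyset), \\
\kukakko{i,j} &= c_{ij} - c_i - c_j + c_\emptyset, \\
\kukakko{i,j,k} &= c_{ijk} - c_{ij} - c_{ik} - c_{jk} + c_i + c_j + c_k - c_\emptyset,
\end{align*}
and using bilinearity of $\vartheta$. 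The case distinctions in the lemma correspond exactly to the value of $\sharp(S \cap T)$, and in particular $\vartheta(c_S c_T)$ depends only on the sizes $\sharp(S \setminus T)$, $\sharp(T \setminus S)$, $\sharp(S \cap T)$.

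For the congruences modulo $(A+1)^n$, I would substitute $A = -1 + t$ into each explicit polynomial and read off its Taylor expansion. Alternatively, Lemma \ref{lemm_e_filt} together with $\kukakko{i,i}, \kukakko{i,j}, \kukakko{i,j,k} \in \ker \epsilon$ already guarantees divisibility of $\vartheta$ by $(A+1)^2$, and any further vanishing (for instance $(A+1)^3$ in items (2), (4) when $\sharp(S \cap T)$ is small, or $(A+1)^4$ in item (6) for $\sharp(S \cap T) \leq 2$) is verified from the explicit factorizations above, the point being that the factors $(A^3+1) = (A+1)(A^2-A+1)$ and $(A^{12}+A^8+2A^7+2A^4+4A^3+3+2A^{-1}+A^{-4})$ each contribute known $(A+1)$-vanishing orders, the latter having a double zero at $A=-1$ with second-order coefficient $60$. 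The principal obstacle is the sheer bookkeeping: six items and up to four subcases apiece. The conceptual simplification is precisely the hole-by-hole localization, which reduces every calculation to a handful of elementary blocks that are then multiplied together and assembled by linearity.
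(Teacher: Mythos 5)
Your plan is essentially the paper's own proof: the paper likewise reduces everything to the single formula $\vartheta(c_{i_1\cdots i_n}c_{j_1\cdots j_m})=(-A^3)^{n+m-k}\mathcal{K}(L_k)$, where $k=\sharp(\shuugou{i_1,\dots,i_n}\cap\shuugou{j_1,\dots,j_m})$ and $L_k$ is the two-component link of Figure \ref{figure_L_n} with $\mathcal{K}(L_k)=A^{2k}(A^4+1+A^{-4})+A^{-6k}$, and then expands $\kukakko{i,i}$, $\kukakko{i,j}$, $\kukakko{i,j,k}$ by bilinearity exactly as you describe. One small caveat: the bracket does not literally factor hole-by-hole into scalars in $\Q[A^{\pm1}]$ --- a hole through which both components pass contributes a $2$-strand Temperley--Lieb element rather than a scalar, which is precisely why $\mathcal{K}(L_k)$ is a sum of two geometric progressions and not a $k$-th power; only the holes in the symmetric difference give scalar factors $-A^{3}$. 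Apart from that imprecision the plan is sound, and your congruence checks (in particular the double zero of $A^{12}+A^{8}+2A^7+2A^4+4A^3+3+2A^{-1}+A^{-4}$ at $A=-1$ with second-order coefficient $60$) agree with the stated values.
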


\begin{proof}
Let $L_n$ be the link in $S^3$ as in Figure \ref{figure_L_n}.
We remark that $\mathcal{K} (L_n) =A^{2n}(A^4+1+A^{-4})+A^{-6n}$.
Let $i_1,i_2, \cdots,i_n$, $j_1,j_2, \cdots,j_m$ be elements of $\shuugou{1, \cdots,b}$
satisfying $i_1<i_2< \cdots < i_n$, $j_1<j_2< \cdots < j_m$
and $\sharp (\shuugou{i_1, \cdots,i_n} \cap \shuugou{j_1, \cdots, j_m}) =k$.
We have $\vartheta (c_{i_1 \cdots i_n } c_{j_1 \cdots j_m})
=(-A^3)^{n+m-k} \mathcal{K} (L_k)$.
Using this formula, we obtain the above equations.
This finishes the proof.

\begin{figure}
\begin{picture}(200,150)
\put(0,0){\includegraphics[width=200pt]{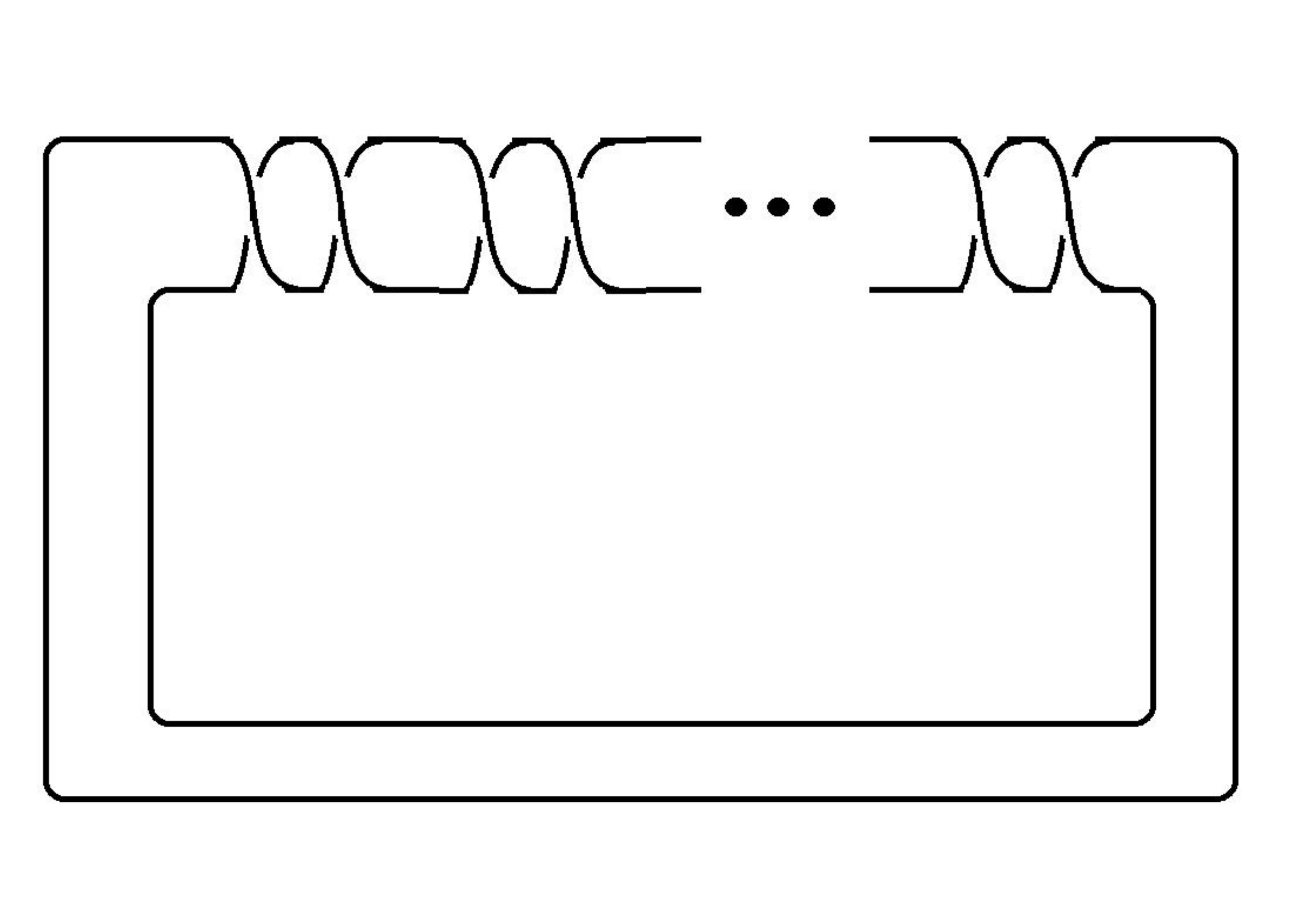}}
\put(45,87){$1$}
\put(78,87){$2$}
\put(155,87){$n$}
\end{picture}
\caption{$L(n)$}
\label{figure_L_n}
\end{figure}

\end{proof}

By Theorem \ref{thm_independent}, Theorem
\ref{thm_basis} follows from the following lemma.

\begin{lemm}
\label{lemm_basis}
The set 
\begin{align*}
\shuugou{1} \cup \shuugou{A+1} \cup \shuugou{\kukakko{i,j}|i \leq j} \cup
\shuugou{\kukakko{i,j,k}|i < j < k}
\end{align*}
is a basis of $\skein{\Sigma}/(\ker \epsilon)^2$
as a $\Q$ vector space.
\end{lemm}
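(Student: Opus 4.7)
Spanning is given by Lemma \ref{lemm_generator}; we prove linear independence. Assume
$$z := c_0 + c_1(A+1) + \sum_{i \leq j} c_{ij}\langle i,j\rangle + \sum_{i<j<k} c_{ijk}\langle i,j,k\rangle \in (\ker \epsilon)^2.$$
The plan is to pair $z$ successively with elements of $\mathcal{B}$ via the bilinear form $\vartheta((\cdot)(\cdot))$, using Lemma \ref{lemm_e_filt} (so that $\vartheta((\ker \epsilon)^N) \subset ((A+1)^N)$) together with the explicit pairings in Lemma \ref{lemm_naiseki}, to extract each $c_b$ in increasing order of filtration depth.

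First, applying $\epsilon$ kills $c_0$. For each $i_0<j_0$, pair $z$ with $\langle i_0,j_0\rangle$: since this lies in $\ker\epsilon$, $\vartheta(z\langle i_0,j_0\rangle)\in((A+1)^3)$. By Lemma \ref{lemm_naiseki}(2),(4),(5) and the direct computation $\vartheta(\langle i,j\rangle)\in((A+1)^2)$ for $i<j$, the $(A+1)^2$-coefficient reduces to $48\,c_{i_0 j_0}(A+1)^2$, forcing $c_{i_0 j_0}=0$ for every such pair. With these now zero, pair $z$ with $\langle i_0,j_0,k_0\rangle$ and read the $(A+1)^3$-coefficient; Lemma \ref{lemm_naiseki}(5),(6) leave only $192\,c_{i_0 j_0 k_0}(A+1)^3$, so $c_{i_0 j_0 k_0}=0$.

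Only $c_1$ and the $c_{ii}$ remain. A direct computation from the formula $\vartheta(c_{i_1\cdots i_n}c_{j_1\cdots j_m})=(-A^3)^{n+m-k}\mathcal{K}(L_k)$ in the proof of Lemma \ref{lemm_naiseki} gives $\vartheta(\langle i_0,i_0\rangle)=\alpha(A+1)+O((A+1)^2)$ with an explicit $\alpha\neq 0$. Pairing $z$ with $\langle i_0,i_0\rangle$ and using Lemma \ref{lemm_naiseki}(1), the $(A+1)^2$-coefficient of $\vartheta(z\langle i_0,i_0\rangle)\in((A+1)^3)$ yields, for each $i_0\in\{1,\dots,b\}$,
$$\alpha c_1+96\,c_{i_0 i_0}+144\sum_i c_{ii}=0;$$
subtracting two such equations forces all $c_{ii}$ to a common value $c$. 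A final equation $c_1+\alpha bc=0$, coming from the $(A+1)^1$-coefficient of $\vartheta(z)\in((A+1)^2)$, combined with the preceding relation, gives a $2\times 2$ system in $(c_1,c)$ whose determinant is nonzero (using the explicit $\alpha$ for every positive integer $b$), forcing $c=0$ and $c_1=0$; the edge case $b=0$ is immediate. The principal obstacle is organizing the Gram matrix $[\vartheta(bb')]_{b,b'\in\mathcal{B}}$ to be block-triangular with invertible diagonal blocks modulo the appropriate powers of $(A+1)$, which is precisely ensured by the specific nonzero constants tabulated in Lemma \ref{lemm_naiseki}.
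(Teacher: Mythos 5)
Your strategy is genuinely different from the paper's: you pair $z$ against the individual basis elements and solve a block-triangular linear system, whereas the paper pairs $X$ against \emph{itself} and exploits positivity of the resulting sum of squares. For $c_0$, $c_{ij}$ ($i<j$), $c_1$ and the $c_{ii}$ your route does work: the products $z\kukakko{i_0,j_0}$ and $z\kukakko{i_0,i_0}$ lie in $(\ker\epsilon)^3$, so Lemma \ref{lemm_e_filt} forces their images under $\vartheta$ into $((A+1)^3)$, and the $(A+1)^2$-coefficients you extract from Lemma \ref{lemm_naiseki} are correct (with $\alpha=12$, the final $2\times 2$ system has determinant $96\neq 0$ for every $b$).

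The gap is the extraction of $c_{ijk}$. To conclude $c_{i_0j_0k_0}=0$ from ``the $(A+1)^3$-coefficient of $\vartheta(z\kukakko{i_0,j_0,k_0})$ equals $192\,c_{i_0j_0k_0}$'', you need $\vartheta(z\kukakko{i_0,j_0,k_0})\in((A+1)^4)$. But $z\kukakko{i_0,j_0,k_0}$ is only known to lie in $(\ker\epsilon)^2\cdot\ker\epsilon=(\ker\epsilon)^3$, so Lemma \ref{lemm_e_filt} yields only $((A+1)^3)$ --- exactly one power short, and the diagonal entry $192(A+1)^3$ sits precisely at the threshold where it cannot be separated from the error. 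The refined estimate you would need, namely $\vartheta(F^4\skein{\Sigma}\,F^3\skein{\Sigma})\subset((A+1)^4)$, is the content of Proposition \ref{prop_product_filtration} combined with Corollary \ref{cor_filtration_bilinear}, which are established only in Section \ref{section_filtration}, after Theorem \ref{thm_basis}; you neither cite nor prove it, and relying on it here would at best be a heavy forward reference. The paper sidesteps this entirely: since $X^2\in(\ker\epsilon)^4$, Lemma \ref{lemm_e_filt} \emph{does} give $\vartheta(X^2)\in((A+1)^4)$, and the $(A+1)^2$- and $(A+1)^3$-coefficients of $\vartheta(X^2)$ are the positive-definite quadratic forms $q^2+\sum_i 96q_i^2+\sum_{i<j}48q_{ij}^2$ and $\sum_{i<j<k}192q_{ijk}^2$, whose vanishing over $\Q$ kills all coefficients at once. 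Your argument is repaired by replacing the pairing with $\kukakko{i_0,j_0,k_0}$ by a self-pairing of $z$ at the last step, which is exactly the paper's proof.
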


\begin{proof}
It is enough to show
that if $X =Q+q(A+1) +\sum_{i}q_i(\kukakko{i,i}-12(A+1))+\sum_{i<j} q_{ij}\kukakko{i,j}
+\sum_{i<j<k}\kukakko{i,j,k} \in (\ker \epsilon)^2 $,
we have $Q=q=0$, $q_i=0$ for $1 \leq i \leq b$,
$q_{ij} =0$ for $1 \leq i<j \leq b$ and $q_{ijk} =0$ for $1 \leq i<j<k \leq b$.
We assume $X \in (\ker \epsilon)^2$. Since $X \in \ker \epsilon$, we have $Q=0$.
By Lemma \ref{lemm_naiseki}(1) (2) (3) (4) (5), we have
\begin{align*}
\vartheta (X^2) =(q^2 +\sum_i 96{q_i}^2+\sum_{i<j} 48{q_{ij}}^2)(A+1)^2 \mod ((A+1)^3).
\end{align*}
Since $\vartheta (X^2) \in ((A+1)^4)$, we obtain $
q=0$, $q_i =0$ for any $i$ and $q_{ij}=0$ for any $i$, $j$.
By Lemma \ref{lemm_naiseki} (6), we have
\begin{align*}
\vartheta (X^2) =(\sum_{i<j<k} 192{q_{ijk}}^2)(A+1)^3 \mod ((A+1)^4).
\end{align*}
Since $\vartheta (X^2) \in ((A+1)^4)$, we obtain 
$q_{ijk}=0$ for any $i$, $j$, $k$.
This proves the lemma.

\end{proof}

As a corollary of Theorem \ref{thm_basis}, we have the following.
\begin{cor}
If $\partial \Sigma \neq \emptyset$,
then $\Q$-linear map $\lambda: H_1 \wedge H_1 \wedge H_1 \to 
\ker \epsilon / (\ker \epsilon)^2$ is injective.
\end{cor}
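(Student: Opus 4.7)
The plan is to derive the corollary directly from Theorem \ref{thm_basis}, treating it essentially as a repackaging. Since $\Sigma$ is a compact connected oriented surface with non-empty boundary, $\pi_1(\Sigma)$ is a free group of some finite rank $M$, freely generated by elements $x_1,\ldots,x_M$. The abelianization sends $x_1,\ldots,x_M$ to a basis of $H_1=H_1(\Sigma,\Q)$, and hence the family $\{[x_i]\wedge[x_j]\wedge[x_k]\mid i<j<k\}$ is a $\Q$-basis of $\wedge^3 H_1$.

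Next, directly from the definition of $\lambda$ and the notation in Lemma \ref{lemm_generator}, one has
\begin{equation*}
\lambda([x_i]\wedge[x_j]\wedge[x_k])=\kukakko{(x_i-1)(x_j-1)(x_k-1)}=\kukakko{x_i,x_j,x_k}.
\end{equation*}
By Theorem \ref{thm_basis}, the set $\{\kukakko{x_i,x_j,x_k}\mid i<j<k\}$ is a subfamily of a $\Q$-basis of $\skein{\Sigma}/(\ker\epsilon)^2$ and, in particular, is $\Q$-linearly independent. (Implicit here is the easy observation that each $\kukakko{x_i,x_j,x_k}$ actually lies in $\ker\epsilon/(\ker\epsilon)^2$: by the construction in subsection \ref{subsection_kappa}, $\kukakko{x}=[L_x]+2-3w(L_x)(A-\gyaku{A})$ satisfies $\epsilon(\kukakko{x})=0$, so products of such elements remain in $\ker\epsilon/(\ker\epsilon)^2$.) Since $\lambda$ sends a basis of $\wedge^3 H_1$ to a $\Q$-linearly independent set, $\lambda$ is injective.

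There is no real obstacle to this proof: the heavy lifting is already contained in Theorem \ref{thm_basis} (ultimately in the non-degeneracy computations of $\vartheta$ carried out via Lemma \ref{lemm_naiseki}), and what remains is only to match definitions. The one conceptual point worth flagging is the passage from a general surface with boundary to the genus $0$ case handled explicitly in Lemma \ref{lemm_basis}; this is justified by Proposition \ref{thm_independent}, which ensures that $(\ker\epsilon)^n\skein{\Sigma,J}$ depends only on the diffeomorphism type of $\Sigma\times I$, allowing us to replace $\Sigma$ by a genus $0$ surface whose $\Sigma\times I$ is diffeomorphic to the given handlebody.
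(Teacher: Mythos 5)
Your proof is correct and is essentially identical to the paper's own argument: the paper also observes that $\shuugou{\kukakko{x_i,x_j,x_k} \mid i<j<k} = \shuugou{\lambda([x_i]\wedge[x_j]\wedge[x_k]) \mid i<j<k}$ and then invokes the linear independence from Theorem \ref{thm_basis}. Your additional remarks (that this set is a basis of $\wedge^3 H_1$, and the reduction to genus $0$ via Proposition \ref{thm_independent}) just make explicit what the paper leaves implicit.
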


\begin{proof}
We have $\shuugou{\kukakko{x_i,x_j,x_k}|i<j<k} =
\shuugou{\lambda ([x_i]\wedge [x_j] \wedge [x_k])|i<j<k}$.
This proves the corollary.
\end{proof}

\begin{cor}[\cite{TsujiCSAI} Remark 3.13]
Let $\Sigma$ be a compact connected surface of genus $g$ with $b+1$ boundary 
components. We assume $0 \leq b$.
We have $\dim_\Q (\ker \epsilon)/(\ker \epsilon)^2  =1+\frac{N(N+1)}{2} 
+\frac{N(N-1)(N-2)}{6}$, where $N=b+2g$. Furthermore $\dim_\Q (\ker \epsilon)^m/
(\ker \epsilon)^{m+1} < \infty$ for any $m$.
\end{cor}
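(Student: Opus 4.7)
The plan is to apply Theorem \ref{thm_basis} to compute the dimension of $\skein{\Sigma}/(\ker \epsilon)^2$ and then deduce the dimension of the ideal quotient. Since the assumption $b \geq 0$ forces $\partial \Sigma \neq \emptyset$, the fundamental group $\pi_1(\Sigma)$ is free of rank $2g + b = N$. I would choose such a free generating set $\{x_1,\ldots,x_N\}$ and invoke Theorem \ref{thm_basis} to exhibit a $\Q$-basis of $\skein{\Sigma}/(\ker \epsilon)^2$ consisting of $1$, $A+1$, the $N(N+1)/2$ elements $\kukakko{x_i,x_j}$ with $i \leq j$, and the $N(N-1)(N-2)/6$ elements $\kukakko{x_i,x_j,x_k}$ with $i<j<k$.

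To pass to $\ker \epsilon/(\ker \epsilon)^2$, I would use the short exact sequence
\[ 0 \to \ker \epsilon/(\ker \epsilon)^2 \to \skein{\Sigma}/(\ker \epsilon)^2 \xrightarrow{\epsilon} \Q \to 0. \]
Among the basis elements, only $1$ has nonzero image under $\epsilon$ (the bracketed elements are in $\ker \epsilon/(\ker \epsilon)^2$ by their very construction in Section \ref{section_computation}, and $\epsilon(A+1)=0$). Removing $1$ therefore leaves a $\Q$-basis of $\ker \epsilon/(\ker \epsilon)^2$ of the asserted cardinality $1 + N(N+1)/2 + N(N-1)(N-2)/6$.

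For the finiteness of $\dim_\Q (\ker \epsilon)^m/(\ker \epsilon)^{m+1}$, set $I = \ker \epsilon$. The strategy is to build a surjection from a finite-dimensional space onto $I^m/I^{m+1}$. The natural candidate is the multiplication map
\[ \mu_m : (I/I^2)^{\otimes m} \to I^m/I^{m+1}. \]
It is well-defined because modifying any one factor $x_j \in I$ of a product $x_1 \cdots x_m$ by an element of $I^2$ changes the product by an element of $I^{m+1}$, and it is surjective because $I^m$ is by definition $\Q$-spanned by $m$-fold products from $I$. Combined with the first part, this yields $\dim_\Q I^m/I^{m+1} \leq (\dim_\Q I/I^2)^m < \infty$.

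I do not anticipate any genuine obstacle: the first part is a direct bookkeeping application of Theorem \ref{thm_basis} once the rank of $\pi_1(\Sigma)$ is identified, and the second is a standard observation that the associated graded algebra $\bigoplus_n I^n/I^{n+1}$ is generated in degrees $\leq 1$, so finite-dimensionality in degree $1$ propagates to all higher degrees.
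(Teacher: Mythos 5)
Your proof is correct and follows exactly the route the paper intends: the paper states this as an immediate consequence of Theorem \ref{thm_basis} without writing out details, and your bookkeeping (rank of the free group $\pi_1(\Sigma)$ equals $N=2g+b$, remove the basis element $1$ to get $\ker\epsilon/(\ker\epsilon)^2$, and surjectivity of the multiplication map $(I/I^2)^{\otimes m}\to I^m/I^{m+1}$ for the finiteness claim) is precisely the standard argument being invoked.
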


\section{Filtrations}
\label{section_filtration}
In this section, Let $\Sigma$ be a connected compact oriented surface with non-empty
boundary.
In this section, we introduce a new filtration of the Kauffman bracket skein modules
of $\Sigma$.
We denote by $\varpi$ the natural quotient map $\ker \epsilon \to (\ker \epsilon /
(\ker / \epsilon)^2)/ \im \lambda$.

\begin{df}
The filtration $\filtn{F^n \skein{\Sigma}}$ of $\skein{\Sigma}$ is defined by 
\begin{align*}
&F^0  \defeq \skein{\Sigma}, \\
&F^1 \skein{\Sigma} = F^2 \skein{\Sigma} \defeq \ker \epsilon, \\
&F^3 \skein{\Sigma} \defeq  \ker \varpi, \\
&F^n \skein{\Sigma} \defeq \ker \epsilon F^{n-2} \skein{\Sigma} \ \ (\mathrm{for} \ \ 4 \leq n).
\end{align*}

\end{df}

We remark that $F^n \skein{\Sigma}$ is an ideal of $\skein{\Sigma}$ for any $n$.
By definition, The filtrations $\filtn{ (\ker \epsilon)^n}$ and $\filtn{F^n 
\skein{\Sigma}}$
induce the same topology of $\skein{\Sigma}$.
More precisely, we have  $F^{2n} \skein{\Sigma}
 =(\ker \epsilon)^n$.

\subsection{The filtrations depend only on the underlying 3-manifold}

We define another filtration $\filtn{F^{\star n} \skein{\Sigma}}$ of $\skein{\Sigma}$
which depends only on the underlying 3-manifold.

Let $\Sigma_{0,b+1}$ be a connected compact oriented surface of genus $0$
with $b+1$ boundary components.
We denote $\eta \defeq c_{123}-c_{12}-c_{23}-c_{13}+c_1+c_2+c_3+A^2+A^{-2} \in
\skein{\Sigma_{0,4}}$ and $\nu \defeq c_1+A^2+A^{-2}$.
Any embedding $\iota : \Sigma_{0,2} \times I \times
 \shuugou{1,2, \cdots, n} \coprod \Sigma_{0,4} \times I \times 
\shuugou{1,2, \cdots,m} \to \Sigma \times I $ induces $\iota_* :
(\oplus^n \skein{\Sigma_{0,2}}) \oplus (\oplus^m \skein{\Sigma_{0,4}}) \to
\skein{\Sigma}$.

\begin{df}
The filtration $\filtn{F^{\star n} \skein{\Sigma}}$ is defined as follows.

\begin{itemize}
\item $F^{\star 0} \skein{\Sigma} \defeq \skein{\Sigma}$ and 
$F^{\star 1} \skein{\Sigma} \defeq F^{\star 2} \skein{\Sigma}$.
\item If $n \geq 1$,
$F^{\star 2n} \skein{\Sigma} $ is the $\Q[A,\gyaku{A}]$-submodule
of $\skein{\Sigma}$ generated by
\begin{equation*}
\shuugou{\iota_*((\oplus^n \nu) \oplus (\oplus^{N-n} c_1))|
\iota:\Sigma_{0,2} \times I \times
 \shuugou{1,2, \cdots, N} \to \Sigma \times I \ \ \mathrm{embedding}}
\end{equation*}
and 
\begin{equation*}
(A+1)F^{\star 2n-2} \skein{\Sigma}.
\end{equation*}
\item If $n \geq 1$,
$F^{\star 2n+1} \skein{\Sigma} $ is the $\Q[A,\gyaku{A}]$-submodule
of $\skein{\Sigma}$ generated by
\begin{align*}
&\shuugou{\iota_*((\oplus^n \nu) \oplus (\oplus^{N-n} c_1) \oplus \eta) |
\iota:\Sigma_{0,2} \times I \times
 \shuugou{1,2, \cdots, N}   \coprod\Sigma_{0,4} \times I \to \Sigma \times I \ \ \mathrm{embedding}}, \\
&\shuugou{\iota_*((\oplus^{n+1} \nu) \oplus (\oplus^{N-n-1} c_1))|
\iota:\Sigma_{0,2} \times I \times
 \shuugou{1,2, \cdots, N} \to \Sigma \times I \ \ \mathrm{embedding}}
\end{align*}
and 
\begin{equation*}
(A+1)F^{\star 2n-1} \skein{\Sigma}.
\end{equation*}
\end{itemize}

\end{df}

\begin{lemm}
\label{lemm_filt_induction_1}
\begin{enumerate}
\item For an embedding $\iota :\Sigma_{0,4} \times I \to \Sigma \times I$,
we have $\iota_* (\eta)  \in \ker \varpi = F^3 \skein{\Sigma}$.
\item We have $F^3 \skein{\Sigma} = F^{\star 3} \skein{\Sigma} $.
\end{enumerate}
\end{lemm}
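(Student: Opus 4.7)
The plan is to handle both parts by working in the quotient $\skein{\Sigma}/(\ker\epsilon)^2$, where $F^3 \skein{\Sigma}/(\ker \epsilon)^2 = \im \lambda$ admits an explicit description via Lemma \ref{lemm_basis}.

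For part (1), the heart is to compute the class of $\eta$ modulo $(\ker \epsilon)^2$ in $\skein{\Sigma_{0,4}}$. Since each $c_S$ is represented by a simple closed curve in the genus-zero surface $\Sigma_{0,4}$, the natural diagram has writhe zero and $c_S \equiv \kukakko{r_S} - 2 \pmod{(\ker \epsilon)^2}$ with $r_S = \prod_{i \in S} r_i$. Plugging into the definition of $\eta$ and using the expansion $(r_1-1)(r_2-1)(r_3-1) = r_{123} - r_{12} - r_{13} - r_{23} + r_1 + r_2 + r_3 - 1$ together with $\kukakko{1} = 0$, the constants collapse to $A^2 + \gyaku{A}^2 - 2 = (A - \gyaku{A})^2 \in (\ker\epsilon)^2$, leaving $\eta \equiv \kukakko{(r_1-1)(r_2-1)(r_3-1)} = \lambda([r_1] \wedge [r_2] \wedge [r_3]) \pmod{(\ker\epsilon)^2}$, so $\eta \in \ker\varpi = F^3 \skein{\Sigma_{0,4}}$. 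For a general embedding $\iota$, the same computation will transport to $\skein{\Sigma}$ with $\iota(c_S)$ and basepoint-compatible lifts $\iota_*(r_i) \in \pi_1(\Sigma)$; since disjoint links in $\iota(\Sigma_{0,4} \times I)$ can be isotoped to distinct $I$-heights of $\Sigma \times I$, the module map $\iota_*$ is compatible with the product structure and preserves $(\ker\epsilon)^2$, and the writhe contributions arising from the $\iota(c_S)$-diagrams combine in the alternating sum defining $\eta$ into an element of $(\ker\epsilon)^2$ using that each $\iota(c_S)$ lies on the embedded 4-holed sphere $\iota(\Sigma_{0,4})$, giving $\iota_*(\eta) \in F^3 \skein{\Sigma}$.

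For part (2), I will split into two inclusions. The inclusion $F^{\star 3} \subset F^3$ follows by inspection of the three types of generators: generators of the first family factor through $\iota_*(\eta) \in F^3$ (by part (1)) and $F^3$ is a two-sided ideal, so the product remains in $F^3$; generators of the second family are products of at least two elements of $\ker \epsilon$, hence in $(\ker \epsilon)^2 \subset F^3$; and $(A+1) F^{\star 1} = (A+1)\ker\epsilon \subset (\ker \epsilon)^2 \subset F^3$. For the reverse inclusion $F^3 \subset F^{\star 3}$, given $x \in F^3 = \ker\varpi$, its class modulo $(\ker\epsilon)^2$ decomposes as $\sum_i \lambda([a_i] \wedge [b_i] \wedge [c_i])$ with $a_i, b_i, c_i \in \pi_1(\Sigma)$; for each summand I will construct an embedding $\iota_i: \Sigma_{0,4} \times I \hookrightarrow \Sigma \times I$ realizing these as the free homotopy classes of $\iota_{i,*}(r_1), \iota_{i,*}(r_2), \iota_{i,*}(r_3)$, so that by part (1), $\iota_{i,*}(\eta) \equiv \lambda([a_i] \wedge [b_i] \wedge [c_i]) \pmod{(\ker\epsilon)^2}$ and $x - \sum_i \iota_{i,*}(\eta) \in (\ker\epsilon)^2$. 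Finally, $(\ker\epsilon)^2 \subset F^{\star 3}$ follows by expanding $([K_1]+2)([K_2]+2) = (\iota_1(\nu) - (A-\gyaku{A})^2)(\iota_2(\nu) - (A-\gyaku{A})^2)$ for tubular $\Sigma_{0,2}$-embeddings around two knots placed at distinct $I$-heights, where the cross term $\iota_1(\nu)\iota_2(\nu)$ lies in $F^{\star 3}$ by definition and the remaining terms are in $(A+1)^2 F^{\star 2} \subset F^{\star 3}$.

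The main obstacle will be the geometric realization step in the hard direction: given arbitrary $a, b, c \in \pi_1(\Sigma)$, construct an embedding $\iota: \Sigma_{0,4} \times I \hookrightarrow \Sigma \times I$ with $\iota_*(r_1), \iota_*(r_2), \iota_*(r_3)$ representing the prescribed free homotopy classes. This requires using the 3-dimensional flexibility of $\Sigma \times I$: the three loops can be lifted to disjoint framed knots at distinct $I$-heights, and any three such disjoint knots cobound an embedded 4-holed sphere with a suitable auxiliary fourth knot whose homology class cancels the sum, after which a regular neighborhood yields the required thickened embedding of $\Sigma_{0,4} \times I$.
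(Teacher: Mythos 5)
Your proof is correct and follows essentially the same route as the paper's: identify $\iota_*(\eta)$ with $\kukakko{(\gamma_1-1)(\gamma_2-1)(\gamma_3-1)}=\lambda([\gamma_1]\wedge[\gamma_2]\wedge[\gamma_3])$ modulo $(\ker\epsilon)^2$ via the writhe cancellation, then get $F^{\star 3}\subset F^3$ from the ideal property and $F^3\subset F^{\star 3}$ by realizing the $\lambda$-classes through embeddings of $\Sigma_{0,4}\times I$ and absorbing the $(\ker\epsilon)^2$ remainder. If anything you are slightly more careful than the paper, which tacitly treats a general element of $\im\lambda$ as a single decomposable wedge rather than a sum, and which obtains $(\ker\epsilon)^2\subset F^{\star 3}$ by citing $F^{\star 2n}=(\ker\epsilon)^n$ from \cite{TsujiCSAI} rather than by your direct expansion of $([K_1]+2)([K_2]+2)$.
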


\begin{proof}
We choose $\gamma_1$, $\gamma_2$ and $ \gamma_3 \in \pi_1 (\Sigma)$ 
satisfying $\zettaiti{p_1 (\iota (r_i))} =\zettaiti{\gamma_i}$ for $i \in \shuugou{1,2,3}$.
We remark that
\begin{equation*}
w(\iota(c_{123}))-w(\iota (c_{12}))-w(\iota (c_{13}))-w(\iota (c_{23}))
+w (\iota (c_1))+w (\iota (c_2))+w (\iota (c_3))=0.
\end{equation*}
Here we also denote by $c \in \mathcal{T}(\Sigma_{0,4})$ an knot 
presented by $c$ for $c \in \shuugou{c_1,c_2,c_3,c_{12},c_{13},c_{23},c_{123}}$.
Hence we have
\begin{equation*}
\iota_* (\eta) =\kappa ((\gamma_1-1)(\gamma_2-1)(\gamma_3-1))
\mod (\ker \epsilon)^2.
\end{equation*}
This proves the lemma (1).

For any element $x \in F^3 \skein{\Sigma}$, we prove $x \in F^{\star 3} \skein{\Sigma}$.
We choose $ \gamma_1$, $\gamma_2$ and $\gamma_3  \in \pi_1 (\Sigma)$ 
and an embedding $\iota :\Sigma_{0,4} \times I \to \Sigma \times I$
satisfying
$x = \kappa ((\gamma_1-1)(\gamma_2-1)(\gamma_3-1)) \mod (\ker \epsilon)^2,$
and $\zettaiti{p_1 (\iota (r_i))} =\zettaiti{\gamma_i}$ for $i \in \shuugou{1,2,3}$.
Since $x =\iota_* (\eta) =0 \mod F^{\star3} \skein{\Sigma} \subset 
(\ker \epsilon)^2$, we have $ x \in F^{\star 3}\skein{\Sigma}$.

In order to prove $F^{\star 3} \skein{\Sigma} \subset
F^3 \skein{\Sigma}$,
it is enough to prove $\iota_* ((\oplus^N c_1) \oplus \eta) \in F^3 \skein{\Sigma}$ 
for any embedding $\iota:\Sigma_{0,2} \times I \times
 \shuugou{1,2, \cdots, N}   \coprod
\Sigma_{0,4} \times I \to \Sigma \times I$.
We have $\iota_* ((\oplus^N c_1) \oplus \eta) =(-2)^N \iota_* (\eta) \mod
F^{\star 3} \skein{\Sigma}$.
By this lemma (1), $\iota_*(\eta) \in F^3 \skein{\Sigma}$.

Hence we obtaion $F^{\star 3} \skein{\Sigma} =
F^3 \skein{\Sigma}$.
This proves the lemma (2).
\end{proof}

Let $c_1,c_2,c_3,c_{12},c_{23},c_{13},c_{123}$ and $c_\star$ be simple closed curves
in $\Sigma_{1,3}$ in Figure \ref{figure_sigma_1_3_1}, \ref{figure_sigma_1_3_2} and
\ref{figure_sigma_1_3_3}. We aloso denote by $c$ an element of $\skein{\Sigma_{1,3}}$
represented a knot presented by $c$ for $c \in \shuugou{c_1,c_2,c_3,c_{12},c_{23},c_{13},c_{123},c_\star}$.
We obtain the lemma by a straightforward calculation.

\begin{figure}
\begin{flushleft}
\begin{picture}(200,150)
\put(0,-170){\includegraphics[width=360pt]{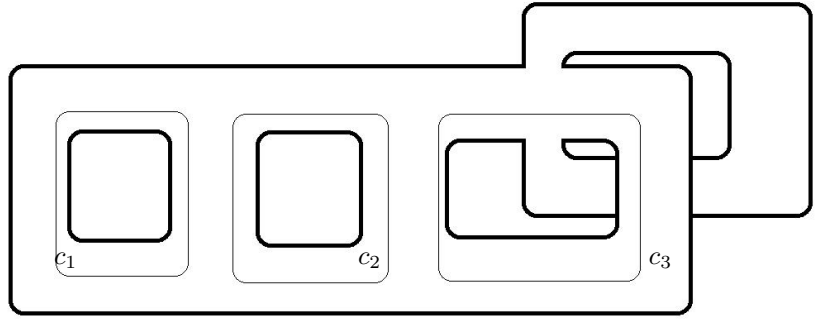}}
\put(45,23){$c_1$}
\put(160,23){$c_2$}
\put(270,23){$c_3$}
\end{picture}
\end{flushleft}
\caption{$c_1,c_2,c_3$}
\label{figure_sigma_1_3_1}
\end{figure}

\begin{figure}
\begin{flushleft}
\begin{picture}(200,150)
\put(0,-170){\includegraphics[width=360pt]{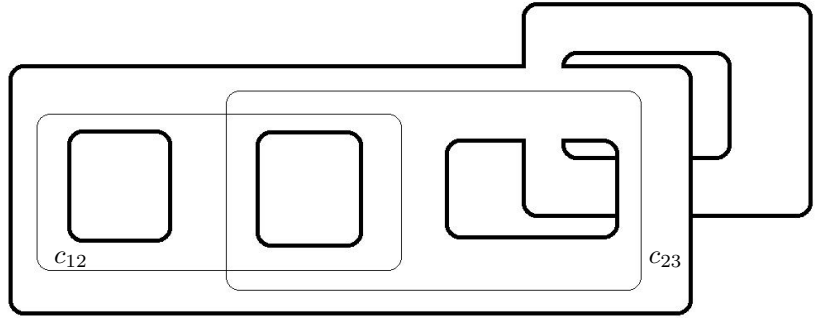}}
\put(45,23){$c_{12}$}
\put(270,23){$c_{23}$}
\end{picture}
\end{flushleft}
\caption{$c_{12},c_{23}$}
\label{figure_sigma_1_3_2}
\end{figure}

\begin{figure}
\begin{flushleft}
\begin{picture}(200,150)
\put(0,-170){\includegraphics[width=360pt]{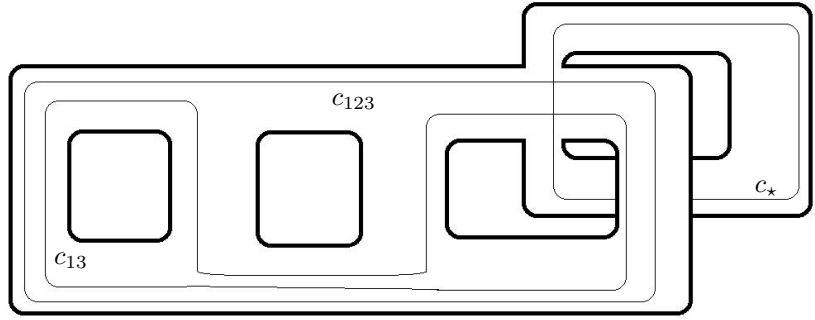}}
\put(45,23){$c_{13}$}
\put(310,50){$c_\star$}
\put(150,83){$c_{123}$}
\end{picture}
\end{flushleft}
\caption{$c_\star,c_{13},c_{123}$}
\label{figure_sigma_1_3_3}
\end{figure}

\begin{lemm}
\label{lemm_filt_lemm_1}
We have $\kukakko{1,2,3} c_\star- c_\star \kukakko{1,2,3} =
(-A+\gyaku{A}) (t_{c_\star}(\kukakko{1,2,3})-\gyaku{t_{c_\star}}(\kukakko{1,2,3}))$
where $\kukakko{1,2,3} =c_{123}-c_{12}-c_{23}-c_{13}+c_1+c_2+c_3+A^2+A^{-2}$
and $t_{c_\star}$ is a Dehn twist along $c_{\star}$.
\end{lemm}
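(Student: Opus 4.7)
My plan is to verify the identity by expanding $\kukakko{1,2,3}=c_{123}-c_{12}-c_{23}-c_{13}+c_1+c_2+c_3+A^2+A^{-2}$ and reducing the claim to a curve-by-curve computation, one constituent simple closed curve at a time. The crucial observation is that both sides of the claimed identity are bilinear in the first factor, so I can treat each $c_I$ ($I\in\{1,2,3,12,23,13,123\}$) separately, and the scalar terms $A^2+A^{-2}$ commute with $c_\star$ and are fixed by $t_{c_\star}^{\pm 1}$, so they contribute $0$ to both sides.

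First I would read off from Figures \ref{figure_sigma_1_3_1}--\ref{figure_sigma_1_3_3} the geometric intersection numbers $|c_I\cap c_\star|$. For every $c_I$ that is disjoint from $c_\star$, we have $c_I c_\star=c_\star c_I$ (the diagrams can be stacked in either order without creating crossings) and $t_{c_\star}(c_I)=c_I=t_{c_\star}^{-1}(c_I)$, so such $c_I$ contribute $0$ to both sides of the identity. For each $c_I$ that meets $c_\star$ transversely, I would compute $c_I c_\star-c_\star c_I$ locally at each intersection: by (\ref{equation_kouten_sa}), switching the over/under strand in a small disk yields
\[
c_I c_\star - c_\star c_I \;=\; (A-A^{-1})\sum_{p\in c_I\cap c_\star}\bigl([d_\infty^{(p)}]-[d_0^{(p)}]\bigr),
\]
where at each $p$ the two smoothings $d_\infty^{(p)}, d_0^{(p)}$ are the standard resolutions of the crossing.

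Next I would compute $t_{c_\star}(c_I)-t_{c_\star}^{-1}(c_I)$ directly from the geometric definition of the Dehn twist: $t_{c_\star}^{\pm1}$ introduces a full $\pm$-twist at each intersection with $c_\star$. Using the skein relation on each such full twist, the difference $t_{c_\star}(c_I)-t_{c_\star}^{-1}(c_I)$ collapses (after the algebraic identity for a full twist in the Kauffman bracket) to $-\sum_{p\in c_I\cap c_\star}([d_\infty^{(p)}]-[d_0^{(p)}])$, matching the previous calculation up to the factor $(-A+A^{-1})/(A-A^{-1})=-1$; combined with the outer sign $(-A+A^{-1})$ this yields exactly the commutator computed above. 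Summing over all $I$ with the coefficients dictated by $\kukakko{1,2,3}$ produces the identity.

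The main obstacle is the bookkeeping for the Dehn twist step: when $c_I$ meets $c_\star$ multiple times, the twist produces nested crossings whose skein expansion potentially contributes cross-terms at pairs of intersection points. I expect these cross-terms to cancel in the alternating sum $t_{c_\star}(c_I)-t_{c_\star}^{-1}(c_I)$, but verifying the cancellation requires a careful local analysis (essentially, showing that only the linearized contribution of the twist survives after taking the difference). Once that local lemma is in hand, the rest of the calculation is combinatorial assembly of the coefficients defining $\kukakko{1,2,3}$.
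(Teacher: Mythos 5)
Your reduction to a curve-by-curve verification is the weak point, and it is not a bookkeeping issue that can be repaired by a ``local lemma'': the identity is simply false for the individual constituents of $\kukakko{1,2,3}$, so bilinearity does not let you treat each $c_I$ separately. Each $c_I$ appearing in $\kukakko{1,2,3}$ is (in the configuration of Figures \ref{figure_sigma_1_3_1}--\ref{figure_sigma_1_3_3}) homologically trivial against $c_\star$, so whenever $c_I$ meets $c_\star$ at all it meets it in at least two points. Already for two transverse intersection points the two sides of your per-curve identity have mismatched leading coefficients: resolving both crossings of $c_I c_\star$ and $c_\star c_I$ gives
\begin{equation*}
c_I c_\star - c_\star c_I = (A^2-A^{-2})\bigl(t_{c_\star}^{\epsilon}(c_I)-t_{c_\star}^{-\epsilon}(c_I)\bigr) + (\text{terms that cancel}),
\end{equation*}
whereas the right-hand side of the claimed identity would require the coefficient $-(A-A^{-1})$; these differ by the unit $-(A+A^{-1})$, which is not $1$. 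Equivalently, your assertion that $t_{c_\star}(c_I)-t_{c_\star}^{-1}(c_I)$ ``collapses to $-\sum_p([d_\infty^{(p)}]-[d_0^{(p)}])$'' is exactly the step that holds only for a single intersection point; for two or more points the full twist contributes genuinely nonlinear (cross-)terms that do \emph{not} cancel within a single $c_I$. The cancellations that make the lemma true occur only across the seven curves in the alternating sum defining $\kukakko{1,2,3}$ (together with the scalar term), which is why the statement is about this particular element and not about arbitrary elements of $\skein{\Sigma_{1,3}}$.

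What a correct argument requires --- and what the paper does, albeit tersely (``a straightforward calculation'') --- is to expand \emph{both} sides for the whole element: write out each product $c_I c_\star$ and $c_\star c_I$ in $\skein{\Sigma_{1,3}}$ using the skein relations, identify each $t_{c_\star}^{\pm1}(c_I)$ as an explicit curve, and verify that the full linear combinations agree. Your observations that curves disjoint from $c_\star$ and the scalar $A^2+A^{-2}$ contribute zero to both sides are correct and do shorten that computation, but the remaining terms must be handled as a single combination, not one curve at a time.
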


Using this lemma, we have the following.

\begin{lemm}
\label{lemm_filt_douti}
We have $F^n \skein{\Sigma} =F^{\star n} \skein{\Sigma}$.
\end{lemm}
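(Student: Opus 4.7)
The plan is to argue by induction on $n$, using the recursive definitions of both filtrations. The base cases $n=0,1$ are immediate from the conventions $F^{\star 0}=\skein{\Sigma}=F^0$ and $F^{\star 1}=F^{\star 2}$, $F^1=F^2$. The base $n=2$ amounts to $F^{\star 2}=\ker\epsilon$: one inclusion is immediate from $\epsilon(\nu)=0$, and for the reverse I would take an arbitrary $[L]-(-2)^{\zettaiti{L}}\in\ker\epsilon$, choose an embedding of disjoint $\Sigma_{0,2}\times I$-tubes whose cores are the components of $L$, then use $c_1=\nu-(A^2+A^{-2})$ to expand $[L]$ multilinearly, noting that $(-A^2-A^{-2})-(-2)=-A^{-2}(A-1)^2(A+1)^2\in(A+1)^2\skein{\Sigma}\subset F^{\star 2}$. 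The case $n=3$ is Lemma \ref{lemm_filt_induction_1}(2).

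For the induction step, I would first prove the multiplicativity $F^{\star k}\cdot F^{\star\ell}\subset F^{\star(k+\ell)}$. The argument is at the level of generators: the product in $\skein{\Sigma}$ is stacking in the $I$-direction, so two generators $\iota_{1*}(\cdots)$ and $\iota_{2*}(\cdots)$ yield a combined generator $\iota_{12*}(\cdots)$ where $\iota_{12}$ is obtained by placing $\iota_1$ in $\Sigma\times[0,\tfrac12]$ and $\iota_2$ in $\Sigma\times[\tfrac12,1]$; the tubes remain disjoint and the counts of $\nu$-factors and $\eta$-factors simply add, giving a generator of $F^{\star(k+\ell)}$. Generators of the form $(A+1)F^{\star(\cdot)}$ compose straightforwardly using the induction hypothesis. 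Given multiplicativity, the inclusion $F^n\subset F^{\star n}$ is immediate from $F^n=\ker\epsilon\cdot F^{n-2}=F^{\star 2}\cdot F^{\star(n-2)}\subset F^{\star n}$.

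For the reverse inclusion $F^{\star n}\subset F^n$, the $(A+1)F^{\star(n-2)}$ generators land in $(A+1)F^{n-2}\subset\ker\epsilon\cdot F^{n-2}=F^n$ by the induction hypothesis. It remains to show that each $\iota_*$-type generator lies in $F^n$. When the tubes of the embedding can be isotoped to lie at disjoint $I$-levels, the generator factorizes in $\skein{\Sigma}$ as a product of its single-tube contributions: each $\nu$-tube contributes a factor in $\ker\epsilon=F^2$ and each $\eta$-factor contributes to $F^3$ by Lemma \ref{lemm_filt_induction_1}(1), so the total product lies in $F^n$ by multiplicativity of the $F$-filtration established in the paper. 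The genuinely linked case is the main obstacle, and this is where Lemma \ref{lemm_filt_lemm_1} enters: its commutator identity $\kukakko{1,2,3}c_\star-c_\star\kukakko{1,2,3}=(-A+A^{-1})(t_{c_\star}(\kukakko{1,2,3})-t_{c_\star}^{-1}(\kukakko{1,2,3}))$ in $\Sigma_{1,3}$ captures precisely how a triple-bracket $\eta$-factor interacts with a linked simple closed curve. Combined with Theorem \ref{thm_Dehn_twist} and the fact that Dehn twists preserve the $F^\bullet$-filtration, this identity lets me replace a linked contribution by an unlinked one plus a correction lying in higher filtration, which by the induction hypothesis is already in $F^n$.

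The hardest step will be the linked case of the backward inclusion. The delicate point is to reduce an arbitrary linked $\iota_*$-generator, after possibly cutting $\Sigma$ along arcs to isolate a $\Sigma_{1,3}$-sub-surface, to the model situation of Lemma \ref{lemm_filt_lemm_1}; once in that model, the commutator identity and the exponential formula for $t_c$ give the filtration bound termwise. I expect that handling this reduction uniformly — unraveling linkings between an $\eta$-factor embedded in $\Sigma_{0,4}\times I$ and the other $\nu$-tubes — is the technical heart of the proof, with the earlier combinatorial work on the single bracket and on $F^3$ providing the necessary local models.
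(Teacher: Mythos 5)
Your overall architecture for the odd-degree case matches the paper's: induct on $n$, use Lemma \ref{lemm_filt_induction_1}(2) for $n=3$, and use the commutator identity of Lemma \ref{lemm_filt_lemm_1} to trade a crossing change between the $\eta$-tube and the other tubes for a correction term in $(-A+A^{-1})F^{\star 2n+1}=(A+1)F^{2n+1}\subset F^{2n+3}$, thereby reducing a linked generator to an unlinked one. That is exactly the engine of the paper's proof, and your identification of the linked case as the technical heart is accurate.

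However, there is a genuine gap in your multiplicativity step. The claim that $F^{\star k}\cdot F^{\star\ell}\subset F^{\star(k+\ell)}$ holds ``at the level of generators'' because ``the counts of $\nu$-factors and $\eta$-factors simply add'' fails when $k$ and $\ell$ are both odd: stacking two generators each carrying one $\eta$-factor produces a configuration with two $\eta$-factors, which is not of the form of any generator of $F^{\star(k+\ell)}$ (the even-degree generators contain no $\eta$ at all). This is precisely why the paper must prove Proposition \ref{prop_product_filtration} separately, via the nontrivial computation $\kukakko{1,2,4}\kukakko{3,5,6}\in(\ker\epsilon)^3$ of Lemma \ref{lemm_product_filtration} --- and note that the proof of Proposition \ref{prop_product_filtration} itself invokes the present lemma, so appealing to ``multiplicativity of the $F$-filtration established in the paper'' for the backward inclusion is circular. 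You are saved in the forward direction because $F^n=\ker\epsilon\cdot F^{n-2}$ only requires multiplying by the even piece $F^{\star 2}$, and in the backward unlinked case because $(\ker\epsilon)^kF^3=F^{2k+3}$ holds by definition; but you should say this rather than assert the false general statement. Separately, the paper disposes of all even degrees at once by citing \cite{TsujiCSAI} Lemma 5.3 for $F^{\star 2n}\skein{\Sigma}=(\ker\epsilon)^n$; your unlinking argument only treats crossings involving the $\eta$-factor, and a crossing change between two $\nu$-tubes produces a difference $(A-A^{-1})([T(d_\infty)]-[T(d_0)])$ whose membership in $(A+1)F^{\star 2n-2}$ is not immediate --- that is again the content of the cited lemma, which you would need to invoke or reprove.
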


\begin{proof}
By \cite{TsujiCSAI} Lemma 5.3., we have 
$F^{2n} \skein{\Sigma} = F^{\star 2n } \skein{\Sigma} = (\ker \epsilon)^n$
for any $n$.
It is enough to show $F^{2n+1} \skein{\Sigma} = F^{\star 2n+1} \skein{\Sigma}$
for $n \in \Z_{\geq 1}$.
To prove it, we use the induction on $n$.
If $n=0$, the claim follows from $F^{\star 1} \skein{\Sigma} =F^{\star 2} \skein{\Sigma}
= \ker \epsilon =F^2 \skein{\Sigma} =F^1 \skein{\Sigma}$.
If $n=1$, the claim follows from Lemma \ref{lemm_filt_induction_1} (2).
We assume $F^{2n+1}\skein{\Sigma} = F^{\star 2n+1} \skein{\Sigma}$.
The embeddings $\iota, \iota':\Sigma_{0,2} \times I \times
 \shuugou{1,2, \cdots, N}   \coprod\Sigma_{0,4} \times I \to \Sigma \times I $
are only differ in an open ball in $\Sigma \times I$ shown in
Figure \ref{figure_iota} and Figure
\ref{figure_iota'}, respectively.
By Lemma \ref{lemm_filt_lemm_1}, we have
\begin{align*}
\iota_*((\oplus^{n+1} \nu) \oplus (\oplus^{N-n-1} c_1) \oplus \eta)-
\iota'_*((\oplus^{n+1} \nu) \oplus (\oplus^{N-n-1} c_1) \oplus \eta) \\
\in (-A+A^{-1}) F^{\star 2n+1} \skein{\Sigma} =(A+1) F^{2n+1} \skein{\Sigma}.
\end{align*}
Using this equation repeatedly, we have
\begin{align*}
&F^{\star 2n+3} \skein{\Sigma} \\
&=(-A+\gyaku{A})F^{\star 2n+1} \skein{\Sigma}
+F^{\star 2n}\skein{\Sigma} F^{\star 3}\skein{\Sigma} \\
&=(-A+\gyaku{A})F^{2n+1} \skein{\Sigma}
+F^{2n}\skein{\Sigma} F^{3}\skein{\Sigma} \\
&=F^{2n+3}\skein{\Sigma}.
\end{align*}

This proves the lemma.

\begin{figure}
\begin{picture}(200,150)
\put(-80,0){\includegraphics[width=300pt]{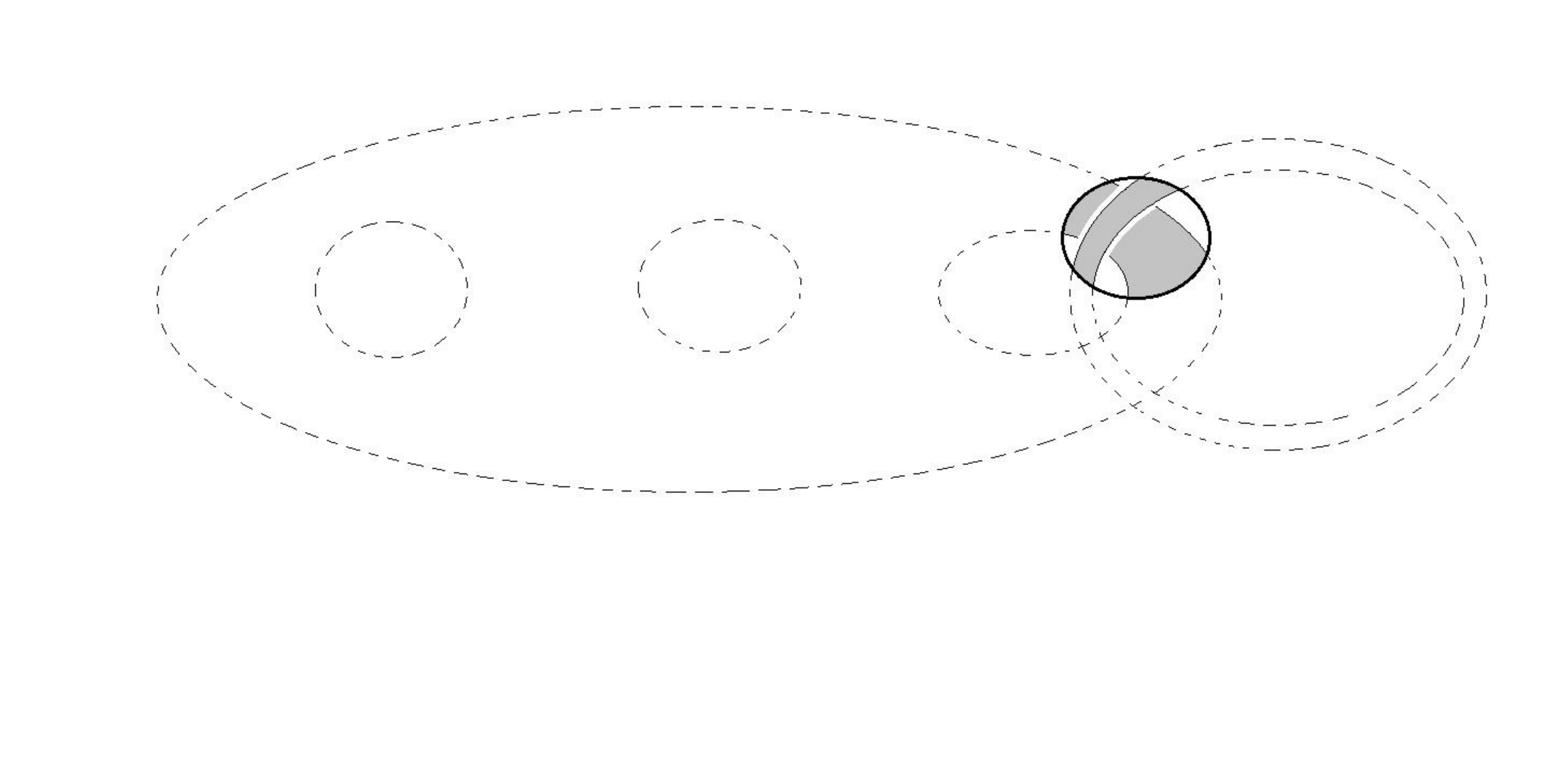}}
\end{picture}
\caption{$\iota$}
\label{figure_iota}
\end{figure}

\begin{figure}
\begin{picture}(200,150)
\put(-80,0){\includegraphics[width=300pt]{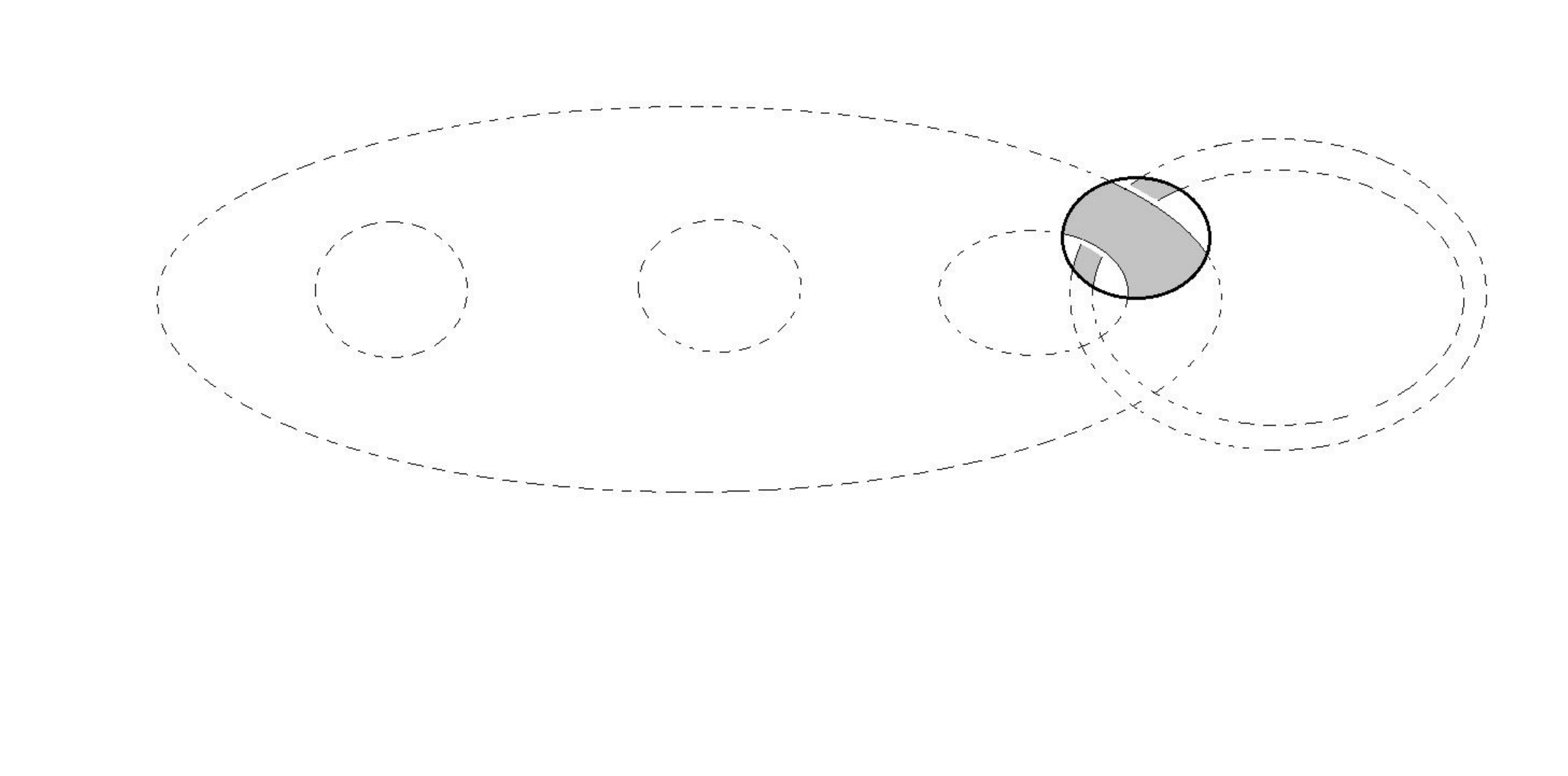}}
\end{picture}
\caption{$\iota'$}
\label{figure_iota'}
\end{figure}

\end{proof}

Using this lemma, we have the following.

\begin{thm}
\label{thm_filtration_independent}
Let $\Sigma$ and $\Sigma'$ be two oriented compact connected surfaces
such that 
there exists a diffeomorphism $\mathcal{X} : (\Sigma \times I)
 \to (\Sigma' \times I)$. 
Then we have $\mathcal{X} (F^n \skein{\Sigma}) = F^n \skein{\Sigma' }$
for $n \geq 0$.
\end{thm}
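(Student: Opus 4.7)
The plan is to reduce to the equivalent filtration $\filtn{F^{\star n} \skein{\Sigma}}$ via Lemma \ref{lemm_filt_douti}. The crucial feature of the star filtration is that its definition involves only (i) embeddings of the 3-manifolds $\Sigma_{0,2} \times I$ and $\Sigma_{0,4} \times I$ into $\Sigma \times I$, (ii) fixed distinguished elements $\nu$, $c_1$, and $\eta$ living in the skein algebras of these small pieces, and (iii) multiplication by the scalar $(A+1)$. Nowhere does the defining data depend on the surface structure of $\Sigma$ itself; it depends only on the 3-manifold $\Sigma \times I$, which is exactly the setting in which $\mathcal{X}$ is given.

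First I would recall that any diffeomorphism $\mathcal{X}: \Sigma \times I \to \Sigma' \times I$ induces a $\Q[A^{\pm 1}]$-module isomorphism $\mathcal{X}_*: \skein{\Sigma} \to \skein{\Sigma'}$ defined by $[L] \mapsto [\mathcal{X}(L)]$; this is the standard functoriality of the Kauffman bracket skein module with respect to oriented diffeomorphisms of 3-manifolds. Then for any embedding
\[
\iota : \Sigma_{0,2} \times I \times \shuugou{1,\ldots,N} \coprod \Sigma_{0,4} \times I \to \Sigma \times I,
\]
the composition $\mathcal{X} \circ \iota$ is an embedding into $\Sigma' \times I$ of the same shape, and for any $x$ in the source skein module one has $\mathcal{X}_*(\iota_*(x)) = (\mathcal{X} \circ \iota)_*(x)$.

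The proof then proceeds by induction on $n$, showing $\mathcal{X}_*(F^{\star n} \skein{\Sigma}) = F^{\star n} \skein{\Sigma'}$. The cases $n = 0,1,2$ follow from $\mathcal{X}_*(\ker \epsilon) = \ker \epsilon$, since $\epsilon$ depends only on $A$ and the number of components of a link and is therefore preserved by $\mathcal{X}_*$. For the inductive step, the two types of generators of $F^{\star 2n}$ and $F^{\star 2n+1}$ are treated separately: generators of the form $\iota_*(\cdots)$ map under $\mathcal{X}_*$ to generators of the same form in $\skein{\Sigma'}$ via $\mathcal{X} \circ \iota$, while generators of the form $(A+1)\cdot y$ with $y \in F^{\star k}\skein{\Sigma}$ map to $(A+1)\cdot \mathcal{X}_*(y)$, which lies in $(A+1) F^{\star k}\skein{\Sigma'}$ by the induction hypothesis. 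The reverse inclusion $F^{\star n} \skein{\Sigma'} \subset \mathcal{X}_*(F^{\star n} \skein{\Sigma})$ is obtained by the same argument applied to $\mathcal{X}^{-1}$.

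The hard part is essentially absent: Lemma \ref{lemm_filt_douti} has already absorbed all of the work by reformulating $F^n \skein{\Sigma}$ intrinsically in terms of data on the 3-manifold $\Sigma \times I$. The remaining argument is a direct naturality check, and the only care needed is to track the two families of generators of $F^{\star n}$ through the induction and to confirm that $\mathcal{X}_*$ commutes both with embedding-pushforward and with multiplication by $(A+1)$.
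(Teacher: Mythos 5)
Your proposal is correct and follows exactly the paper's route: reduce to the star filtration via Lemma \ref{lemm_filt_douti} and observe that $F^{\star n}$ is defined purely in terms of the underlying $3$-manifold, so $\mathcal{X}(F^{\star n}\skein{\Sigma}) = F^{\star n}\skein{\Sigma'}$. The paper dispatches that last equality with the phrase ``by definition,'' whereas you spell out the naturality and induction details; the substance is the same.
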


\begin{proof}
By definition, we have $\mathcal{X} (F^{\star n} \skein{\Sigma})=
F^{\star n} \skein{\Sigma'}$.
By Lemma \ref{lemm_filt_douti}, we have
$F^{\star n} \skein{\Sigma} =F^{ n} \skein{\Sigma}$ and
$F^{\star n} \skein{\Sigma'} =F^{ n} \skein{\Sigma'}$.
Hence we have $\mathcal{X} (F^{ n} \skein{\Sigma})=
F^{n} \skein{\Sigma'}$.
This proves the theorem.

\end{proof}

\begin{cor}
\label{cor_filtration_bilinear}
We have $F^{\star n} \skein{D^2} =(\ker \epsilon)^{\gauss{\frac{n+1}{2}}}=
(A+1)^{\gauss{\frac{n+1}{2}}} \Q[\pm A]$.
Furthermore, in the situation of section \ref{section_basis},
we have $\theta_\xi (F^n \skein{\Sigma}) \subset (A+1)^{\gauss{\frac{n+1}{2}}} \Q[\pm A]$.
\end{cor}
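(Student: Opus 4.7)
\noindent\textit{Proof plan.} My plan is to deduce both assertions from the single inclusion $F^{\star n}\skein{\Sigma}\subseteq(\ker\epsilon)^{\gauss{(n+1)/2}}\skein{\Sigma}$, valid on any compact connected oriented surface $\Sigma$.

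I would first verify that $\iota_*(\nu)$ and $\iota_*(\eta)$ lie in $\ker\epsilon\cdot\skein{\Sigma}$ for every embedding $\iota$, by the direct computations $\epsilon(\nu)=\epsilon(c_1)+\epsilon(A^2+A^{-2})=-2+2=0$ and $\epsilon(\eta)=-2+6-6+2=0$; in fact Lemma \ref{lemm_filt_induction_1}(1) sharpens this to $\iota_*(\eta)\in F^3\skein{\Sigma}\subset\ker\epsilon$. Each generator of $F^{\star 2n}\skein{\Sigma}$, of the form $\iota_*((\oplus^n\nu)\oplus(\oplus^{N-n}c_1))$, is obtained by placing $n$ copies of $\nu\in\ker\epsilon\cdot\skein{\Sigma_{0,2}}$ in disjoint slabs of $\Sigma\times I$. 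Expanding each $\nu=c_1+A^2+A^{-2}$ and collecting terms, one checks using the alternating-sublink identity from \cite{TsujiCSAI} Lemma 5.3 that such an element lies in $(\ker\epsilon)^n$. Analogously, each generator of $F^{\star 2n+1}\skein{\Sigma}$ carries $n+1$ factors in $\ker\epsilon$ (counting $\iota_*(\eta)$ among them), and so lies in $(\ker\epsilon)^{n+1}$. A short induction on $n$, which also absorbs the recursive $(A+1)F^{\star n-2}$ contributions via $A+1\in\ker\epsilon$, yields the desired inclusion.

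For the first equality, I specialize to $\Sigma=D^2$: here $\skein{D^2}\cong\Q[A^{\pm1}]$ and $\ker\epsilon=(A+1)\Q[A^{\pm1}]$, so ``$\subseteq$'' is immediate from the above. The reverse inclusion follows from the trivial base $F^{\star 0}\skein{D^2}=\Q[A^{\pm1}]$ together with the recursive containment $(A+1)F^{\star n-2}\subseteq F^{\star n}$, giving $F^{\star 2n}\skein{D^2}\supseteq(A+1)^n\Q[A^{\pm1}]$ and $F^{\star 2n+1}\skein{D^2}\supseteq(A+1)^{n+1}\Q[A^{\pm1}]$ by induction.

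For the second assertion, Lemma \ref{lemm_filt_douti} identifies $F^n\skein{\Sigma}=F^{\star n}\skein{\Sigma}$, so the inclusion $F^n\skein{\Sigma}\subseteq(\ker\epsilon)^{\gauss{(n+1)/2}}$ just proved, combined with Lemma \ref{lemm_e_filt}, yields $\vartheta_\xi(F^n\skein{\Sigma})\subseteq(A+1)^{\gauss{(n+1)/2}}\Q[A^{\pm1}]$. The main obstacle is the bookkeeping step confirming that the multi-slab generator lands in the correct power of $\ker\epsilon$ despite possibly intricate linking among the embedded slabs; once that step is isolated through \cite{TsujiCSAI} Lemma 5.3, the rest is mechanical parity-tracking in $n$.
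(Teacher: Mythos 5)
Your computation of $F^{\star n}\skein{D^2}$ is essentially fine: over the disk $H_1(D^2,\Q)=0$ forces $\im\lambda=0$, hence $F^3\skein{D^2}=\ker\varpi=(\ker\epsilon)^2$, and together with $F^{2n}\skein{D^2}=(\ker\epsilon)^n$ and $F^{2n+1}\skein{D^2}=\ker\epsilon\, F^{2n-1}\skein{D^2}$ this yields $(A+1)^{\gauss{\frac{n+1}{2}}}\Q[A^{\pm1}]$ in both parities. But the universal inclusion on which you base everything, $F^{\star n}\skein{\Sigma}\subseteq(\ker\epsilon)^{\gauss{\frac{n+1}{2}}}$ for an arbitrary surface, is false for odd $n\geq 3$ whenever $\dim_\Q H_1(\Sigma,\Q)\geq 3$. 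Indeed $F^{\star 3}\skein{\Sigma}=F^3\skein{\Sigma}=\ker\varpi$ contains $\iota_*(\eta)\equiv\lambda([\gamma_1]\wedge[\gamma_2]\wedge[\gamma_3])\bmod(\ker\epsilon)^2$ by Lemma \ref{lemm_filt_induction_1}, and Theorem \ref{thm_basis} shows the classes $\kukakko{x_i,x_j,x_k}$ are linearly independent in $\ker\epsilon/(\ker\epsilon)^2$; equivalently, $F^3\skein{\Sigma}/F^4\skein{\Sigma}\cong\wedge^3 H_1\neq 0$. So $F^3\skein{\Sigma}\not\subseteq(\ker\epsilon)^2$, and your bookkeeping step that counts $\iota_*(\eta)$ as one factor of $\ker\epsilon$ cannot produce $n+1=\gauss{\frac{2n+2}{2}}$ powers for $F^{\star(2n+1)}\skein{\Sigma}$; all that holds inside $\skein{\Sigma}$ is $F^{2n+1}\skein{\Sigma}\subseteq F^{2n}\skein{\Sigma}=(\ker\epsilon)^n$.

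This breaks your derivation of the second assertion: Lemma \ref{lemm_e_filt} applied to $F^{2n+1}\skein{\Sigma}\subseteq(\ker\epsilon)^n$ gives only $\vartheta_\xi(F^{2n+1}\skein{\Sigma})\subseteq((A+1)^{n})$, one power short of the claimed $((A+1)^{n+1})$. The missing power is a genuinely three-dimensional effect invisible to the $(\ker\epsilon)$-adic filtration on $\Sigma$: one must first transport the generators of $F^{\star n}\skein{\Sigma}$ along $e_\xi$ into $S^3$. Every embedding $\Sigma_{0,2}\times I\times\shuugou{1,\dots,N}\coprod\Sigma_{0,4}\times I\to S^3$ can be isotoped into a ball $D^2\times I\subset S^3$, so $\vartheta_\xi(F^{\star n}\skein{\Sigma})\subseteq F^{\star n}\skein{D^2}$, and only after this push-forward — which kills the $\wedge^3 H_1$ obstruction because $H_1(D^2)=0$ — does the first assertion deliver $((A+1)^{\gauss{\frac{n+1}{2}}})$. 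In short, the intended argument applies the $D^2$ computation after embedding into $S^3$, not the $(\ker\epsilon)$-adic estimate of Lemma \ref{lemm_e_filt} before it.
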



\subsection{The product and the filtration}
In this subsection, we prove the following.

\begin{prop}
\label{prop_product_filtration}
For $n$ and $m \in \Z_{\geq 0}$, we have
$F^n \skein{\Sigma} F^m \skein{\Sigma} 
\subset F^{n+m} \skein{\Sigma}$.
\end{prop}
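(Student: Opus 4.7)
The strategy is to prove the inclusion by strong induction on $n+m$, using the equivalent description $F^{n}=F^{\star n}$ established in Lemma~\ref{lemm_filt_douti} and the recursive formula $F^{n}=\ker\epsilon\cdot F^{n-2}$ valid for $n\geq 4$.

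For the inductive step, when $\max(n,m)\geq 4$ the recursion does all the work. If $n\geq 4$, then
\[
F^{n}F^{m}\;=\;\ker\epsilon\cdot F^{n-2}\cdot F^{m}\;\subset\;\ker\epsilon\cdot F^{n+m-2}\;=\;F^{n+m},
\]
the middle step being the inductive hypothesis. The case $m\geq 4$ follows symmetrically: the $I$-reversal is an anti-automorphism of $\skein{\Sigma}$ preserving every $F^{k}$ by Theorem~\ref{thm_filtration_independent}, so it yields the right-handed recursion $F^{m}=F^{m-2}\cdot\ker\epsilon$, and the same reduction applies on the right.

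It remains to dispose of the small cases where $\max(n,m)\leq 3$. If either factor is $F^{0}$ the inclusion is trivial. For $n,m\in\{1,2\}$ the product lies in $(\ker\epsilon)^{2}$, and in each subcase $(\ker\epsilon)^{2}\subset F^{n+m}$; the only non-obvious check is $(\ker\epsilon)^{2}\subset F^{3}=\ker\varpi$, which is built into the definition of $\varpi$. For $\{n,m\}=\{1,3\}$ or $\{2,3\}$ one combines $F^{3}\subset\ker\epsilon$ with the recursion (and its right-handed counterpart) to conclude.

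The only case that needs real work is $n=m=3$, where one must show $F^{3}F^{3}\subset F^{6}=(\ker\epsilon)^{3}$. Here I pass to the $F^{\star 3}$-description: every generator other than the $\eta$-only ones $\iota_{*}(c_{1}^{\otimes N}\oplus\eta)$, namely the $\nu$-bearing generators and $(A+1)\ker\epsilon$, already lies in $(\ker\epsilon)^{2}$, so its product with anything in $F^{\star 3}$ falls into $(\ker\epsilon)^{3}$ automatically. The remaining subcase is a product of two $\eta$-only generators with disjoint embeddings $\iota,\iota'$, which reduces to checking $\iota_{*}(\eta)\cdot\iota'_{*}(\eta)\in(\ker\epsilon)^{3}$; I would extract the missing $\ker\epsilon$-factor by using Lemma~\ref{lemm_filt_lemm_1} to rewrite one of the $\eta$-factors modulo $(A+1)F^{\star 3}$ in terms of Dehn-twist images, together with the identity $A^{2}+A^{-2}-2\in(A+1)^{2}\Q[A^{\pm 1}]$ that forces an additional $(A+1)^{2}$-factor to appear. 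The main obstacle is exactly this step: a naive estimate gives only $(\ker\epsilon)^{2}$, and the extra $\ker\epsilon$-factor has to be coaxed out of the geometry of two disjoint $\Sigma_{0,4}$-embeddings by a careful skein-theoretic identification.
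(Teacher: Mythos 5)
Your reduction is sound and follows the same skeleton as the paper's: peel off $\ker\epsilon$-factors using $F^{n}=\ker\epsilon\,F^{n-2}$ (together with $F^{2}F^{3}=F^{3}F^{2}=F^{5}$, which the paper gets from Lemma \ref{lemm_filt_douti} and which also disposes of your left/right asymmetry without invoking the $I$-reversal), handle the trivial small cases, and observe that everything collapses to the single assertion that the product of two $\eta$-type generators supported in disjoint copies of $\Sigma_{0,4}\times I$ lies in $(\ker\epsilon)^{3}$. But that single assertion \emph{is} the proposition — it is exactly Lemma \ref{lemm_product_filtration} of the paper, the statement $\kukakko{1,2,4}\kukakko{3,5,6}\in(\ker\epsilon)^{3}=F^{6}\skein{\Sigma_{0,7}}$ — and you do not prove it; you explicitly flag it as "the main obstacle" and offer only a sketch. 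So there is a genuine gap, and it sits precisely at the load-bearing step.

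Moreover, the mechanism you propose for closing it does not work as described. Lemma \ref{lemm_filt_lemm_1} is a \emph{commutator} identity: it says that interchanging $\kukakko{1,2,3}$ with $c_\star$ costs a factor $(-A+A^{-1})$ times an element of the same filtration level, and the paper uses it only to show that $F^{\star n}$ is insensitive to crossing changes in the defining embeddings (proof of Lemma \ref{lemm_filt_douti}). It cannot promote a \emph{product} of two degree-$3$ elements from $(\ker\epsilon)^{2}$ to $(\ker\epsilon)^{3}$. What actually does the work in the paper is an explicit skein identity in $\skein{\Sigma_{0,5}}$ for the \emph{anticommutator} $\kukakko{1,3}\kukakko{2,4}+\kukakko{2,4}\kukakko{1,3}$, expressing it as $(A^{2}+A^{-2})\kukakko{1,2,3,4}$ plus terms manifestly in $(\ker\epsilon)^{3}$; this is transported to $\skein{\Sigma_{0,7}}$ by alternating sums over embeddings $\iota(\gamma_1^{\epsilon_1}\gamma_2^{\epsilon_2}\gamma_3^{\epsilon_3},\gamma_4,\gamma_5,\gamma_6)$, and combined with $\kukakko{1,2,3,4}\in(\ker\epsilon)^{2}$ (Corollary \ref{cor_calculation_sigma_0_5}, itself a consequence of the same identity). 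Your observation that $A^{2}+A^{-2}-2\in((A+1)^{2})$ is indeed one small ingredient of that computation, but without the anticommutator formula and the passage from two disjoint $\Sigma_{0,4}$'s to a single $\Sigma_{0,7}$ there is no way to see where such a factor would enter. To complete your argument you would need to supply the content of Lemma \ref{lemm_product_filtration} or an equivalent computation.
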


For $N \in \Z_{\geq 2}$ and $1 \leq i_1<i_2< \cdots <i_j \leq N$,
we denote
\begin{equation*}
\kukakko{i_1,i_2, \cdots,i_j} \defeq
\sum_{\shuugou{k_1,\cdots,k_l} \subset \shuugou{i_1, \cdots,i_j},
k_1 <k_2< \cdots <k_l}(-1)^{j-l}c_{k_1 \cdots k_l} \in \skein{\Sigma_{0,N+1}}.
\end{equation*}
Here we define $c_{\emptyset} \defeq -A^2 -A^{-2}$.

\begin{lemm}
\label{lemm_product_filtration}
We have $\kukakko{1,2,4}\kukakko{3,5,6}  \in (\ker \epsilon)^3  =F^6 \skein{\Sigma_{0,7}}$.
\end{lemm}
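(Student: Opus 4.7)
The plan is to verify the lemma by a direct computation, exploiting that the index sets $\{1,2,4\}$ and $\{3,5,6\}$ are disjoint. First, I observe that for every $I\subset\{1,2,4\}$ and $J\subset\{3,5,6\}$, the curves $c_I$ and $c_J$ have geometric intersection number zero in $\Sigma_{0,7}$: the curves $c_{124}$ and $c_{356}$ can be drawn as disjoint separating curves (bounding disjoint sub-disks containing $\{1,2,4\}$ and $\{3,5,6\}$ respectively), and any $c_I$ (resp.\ $c_J$) can be drawn inside the disk bounded by $c_{124}$ (resp.\ $c_{356}$). Consequently $c_Ic_J = c_I\sqcup c_J$ as elements of $\skein{\Sigma_{0,7}}$.

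Next, substitute $b_S := c_S+2$, so that $b_S\in\ker\epsilon$ for $S\neq\emptyset$ while $b_\emptyset = 2-A^2-A^{-2}\in(\ker\epsilon)^2$. The constants cancel in the inclusion-exclusion definition, and a straightforward manipulation yields
\[
\kukakko{1,2,4}\kukakko{3,5,6} = \sum_{I\subset\{1,2,4\}}\sum_{J\subset\{3,5,6\}}(-1)^{|I|+|J|}b_Ib_J.
\]
The terms with $I=\emptyset$ or $J=\emptyset$ contribute to $b_\emptyset\cdot\ker\epsilon\subset(\ker\epsilon)^3$ automatically. It therefore suffices to verify that
\[
\Omega := \sum_{\substack{\emptyset\neq I\subset\{1,2,4\}\\ \emptyset\neq J\subset\{3,5,6\}}}(-1)^{|I|+|J|}b_Ib_J \in(\ker\epsilon)^3.
\]

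The main step is to verify $\Omega\in(\ker\epsilon)^3$. Expanding each $b_Ib_J = (c_I+2)(c_J+2) = c_I\sqcup c_J+2c_I+2c_J+4$ using the disjoint-link structure, the $49$-term double alternating sum must be shown to collapse modulo $(\ker\epsilon)^3$. The key identity driving the collapse is the group-ring fact $(r_1-1)(r_2-1)(r_4-1)\cdot(r_3-1)(r_5-1)(r_6-1)\in I^6\subset\Q\pi_1(\Sigma_{0,7})$; transferring this to the skein algebra via Theorem \ref{thm_kappa} and Lemma \ref{lemm_calc_1} (using the factorization $\kukakko{1,2,4}\equiv\kappa((r_1-1)(r_2-1)(r_4-1))\pmod{(\ker\epsilon)^2}$ and the analogous statement for $\kukakko{3,5,6}$) produces three extra augmentation-ideal factors beyond the two naively visible in each $b_Ib_J$. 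The main obstacle is the explicit bookkeeping of this collapse; a more structural alternative is to verify, using the Lie-algebra homomorphism property of $\kappa$ together with the vanishing of the Goldman bracket between loops with disjoint simple-closed-curve representatives, that the induced multiplication $\im\lambda\otimes\im\lambda\to(\ker\epsilon)^2/(\ker\epsilon)^3$ annihilates disjoint-support pairs in $\wedge^3 H_1(\Sigma_{0,7})$, so that the lemma follows as the specific case for $[r_1]\wedge[r_2]\wedge[r_4]$ and $[r_3]\wedge[r_5]\wedge[r_6]$.
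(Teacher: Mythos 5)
Your proposal rests on the claim that $c_{124}$ and $c_{356}$ (and more generally $c_I$ for $I\subset\{1,2,4\}$ and $c_J$ for $J\subset\{3,5,6\}$) can be realized disjointly in $\Sigma_{0,7}$. This is false, and it is the crux of the matter: the boundary components are arranged in a row and the index sets $\{1,2,4\}$ and $\{3,5,6\}$ interleave (we have $2<3<4<5$), so a simple closed curve enclosing holes $1,2,4$ and one enclosing holes $3,5,6$ have nonzero geometric intersection number. The paper's own computation confirms this on the smaller model: it finds $\kukakko{1,3}\kukakko{2,4}-\kukakko{2,4}\kukakko{1,3}=(A^2-A^{-2})(\cdots)\neq 0$ in $\skein{\Sigma_{0,5}}$, which would be impossible if the interleaved curves $c_{13}$ and $c_{24}$ were disjoint. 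The lemma is stated for exactly this interleaved configuration because it is the generic case needed in the proof of Proposition \ref{prop_product_filtration} (two $\Sigma_{0,4}\times I$'s stacked in $\Sigma\times[\frac12,1]$ and $\Sigma\times[0,\frac12]$ whose projections to $\Sigma$ overlap); the separable case you describe is the easy case that carries no content. Once disjointness fails, your factorization $c_Ic_J=c_I\sqcup c_J$, the commutativity, and the collapse of the inclusion--exclusion sum all fail.

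The fallback "structural" argument has an independent gap. The group-ring inclusion $(r_1-1)(r_2-1)(r_4-1)(r_3-1)(r_5-1)(r_6-1)\in I^6$ cannot be transferred to the skein algebra through $\kappa$: by Theorem \ref{thm_kappa}, $\kappa$ is a homomorphism of \emph{Lie} algebras (intertwining the Goldman bracket with $[\ ,\ ]=\frac{1}{-A+A^{-1}}(xy-yx)$), not of associative algebras, so it says nothing about the skein \emph{product} $\kukakko{1,2,4}\kukakko{3,5,6}$. Controlling the bracket only controls $xy-yx$; the lemma asserts that $xy$ itself drops one further step in the augmentation filtration, and this is genuinely a statement about the associative product that the paper establishes by an explicit skein calculation: it computes $\kukakko{1,3}\kukakko{2,4}+\kukakko{2,4}\kukakko{1,3}$ and $\kukakko{1,3}\kukakko{2,4}-\kukakko{2,4}\kukakko{1,3}$ in $\skein{\Sigma_{0,5}}$ in closed form, then pushes these identities into $\skein{\Sigma_{0,7}}$ via alternating sums of embeddings $\iota(y_1,y_2,y_3,y_4)$ that implement the inclusion--exclusion producing $\kukakko{1,2,4}$ and $\kukakko{3,5,6}$, and reads off the filtration degree from the resulting formulas. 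Your proposal does not contain a substitute for this computation.
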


\begin{proof}
By a straight calculation, we have
\begin{align*}
&\kukakko{1,3}\kukakko{2,4}+\kukakko{2,4}\kukakko{1,3}= \\ 
&(A^2+A^{-2})\kukakko{1,2,3,4} 
+(A^4+A^{-4})(\kukakko{1,2}\kukakko{3,4} +\kukakko{1,4}\kukakko{2,3})
\\
&+(A^2+A^{-2})(\kukakko{1,2}\kukakko{3}\kukakko{4}+
\kukakko{2,3}\kukakko{4}\kukakko{1}+
\kukakko{1,4}\kukakko{2}\kukakko{3}
+\kukakko{3,4}\kukakko{1}\kukakko{2}) \\
&+2(\kukakko{1}\kukakko{2,3,4}+\kukakko{2}\kukakko{1,3,4}
+\kukakko{3}\kukakko{1,2,4}+\kukakko{4}\kukakko{1,2,3}) \\
&+2\kukakko{1}\kukakko{2}\kukakko{3}\kukakko{4},
\end{align*}
and
\begin{align*}
&\kukakko{1,3}\kukakko{2,4}-\kukakko{2,4}\kukakko{1,3} \\
&=(A^2-A^{-2}) \\
&(\frac{1}{2}(2\kukakko{1,4}-\kukakko{1}\kukakko{4}
+(A-\gyaku{A})^2(\kukakko{1}+\kukakko{4}))
(2\kukakko{2,3}-\kukakko{2}\kukakko{3}
+(A-\gyaku{A})^2(\kukakko{2}+\kukakko{3})) \\
&-\frac{1}{2}(2\kukakko{1,2}-\kukakko{1}\kukakko{2}
+(A-\gyaku{A})^2(\kukakko{1}+\kukakko{2}))
(2\kukakko{3,4}-\kukakko{3}\kukakko{4}
+(A-\gyaku{A})^2(\kukakko{3}+\kukakko{4})) \\
&+(A-\gyaku{A})^2(c_{12}c_{34}-c_{14}c_{23})).
\end{align*}

For $y_1,y_2,y_3,y_4 \in \pi_1 (\Sigma_{0,7})$,
we denote by $\iota(y_1,y_2,y_3,y_4)$ the embedding
$\Sigma_{0,4} \to \Sigma_{0,7}$ inducing 
 $\pi_1(\Sigma_{0,4}) \to \pi_1 (\Sigma_{0,7})$
such that $\gamma_i \mapsto y_i$ for $i=1,2,3,4$.
We aslso denote by $\iota(y_1,y_2,y_3,y_4)$
the homeomorphism $\widehat{\skein{\Sigma_{0,4}}}
\to \widehat{\skein{\Sigma_{0,7}}}$ induced by
the embedding $\iota(y_1,y_2,y_3,y_4)$.

Using the equations staede the above and the two $\Q[A,\gyaku{A}]$-module homomorphisms
\begin{align*}
&\sum_{\epsilon_1, \epsilon_2, \epsilon_3 \in \shuugou{0,1}}
(-1)^{\epsilon_1+\epsilon_2+\epsilon_3} \iota (\gamma_1^{\epsilon_1}
\gamma_2^{\epsilon_2}\gamma_3^{\epsilon_3}, \gamma_4,\gamma_5,\gamma_6) \\
&\sum_{\epsilon_1, \epsilon_2, \epsilon_3, \epsilon_4 \in \shuugou{0,1}}
(-1)^{\epsilon_1+\epsilon_2+\epsilon_3+\epsilon_4}
\iota(\gamma_1^{\epsilon_1} \gamma_2^{\epsilon_2}, \gamma_3,\gamma_4,
\gamma_5^{\epsilon_3}\gamma_6^{\epsilon_4}),
\end{align*}
we have
\begin{align*}
&(A^2+A^{-2})\kukakko{1,2,3,4,5,6} \\
&+(A^4+A^{-4})(\kukakko{1,2,3}\kukakko{4,5,6} +\kukakko{1,2,5,6}\kukakko{3,4
})
-\kukakko{1,2,4}\kukakko{3,5,6}-\kukakko{3,5,6}\kukakko{1,2,4} \\
&+(A^2+A^{-2})(\kukakko{1,2,3}\kukakko{4}\kukakko{5,6}+
\kukakko{3,4}\kukakko{5,6}\kukakko{1,2}+
\kukakko{1,2,5,6}\kukakko{3}\kukakko{4}
+\kukakko{1,2}\kukakko{3}\kukakko{4,5,6}) \\
&+2(\kukakko{1,2}\kukakko{3,4,5,6}+\kukakko{3}\kukakko{1,2,4,5,6}
+\kukakko{4}\kukakko{1,2,3,5,6}+\kukakko{5,6}\kukakko{1,2,3,4}) \\
&+2\kukakko{1,2}\kukakko{3}\kukakko{4}\kukakko{5,6} \\
&=(A^2+A^{-2})\kukakko{1,2,3,4,5,6} \\
&+(A^4+A^{-4})(\kukakko{1,2,3,4}\kukakko{5,6} +\kukakko{1,2,3,6}\kukakko{4,5})
-\kukakko{1,2,3,5,}\kukakko{4,6}-\kukakko{4,6}\kukakko{1,2,3,5} \\
&+(A^2+A^{-2})(\kukakko{1,2,3,4}\kukakko{5}\kukakko{6}+
\kukakko{4,5}\kukakko{6}\kukakko{1,2,3}+
\kukakko{1,2,3,6}\kukakko{4}\kukakko{5}
+\kukakko{4,5}\kukakko{6}\kukakko{1,2,3}) \\
&+2(\kukakko{1,2,3}\kukakko{4,5,6}+\kukakko{4}\kukakko{1,2,3,5,6}
+\kukakko{6}\kukakko{1,2,3,4,5}+\kukakko{5}\kukakko{1,2,3,4,6}) \\
&+2\kukakko{1,2,3}\kukakko{4}\kukakko{5}\kukakko{6} \\
&=0,
\end{align*} 
and
\begin{align*}
&\kukakko{1,2,4}\kukakko{3,5,6}-\kukakko{3,5,6}\kukakko{1,2,4} \\
&=(A^2-A^{-2}) \\
&(\frac{1}{2}(2\kukakko{1,2,5,6}-\kukakko{1,2}\kukakko{5,6}
+(A-\gyaku{A})^2(\kukakko{1,2}+\kukakko{5,6}))
(2\kukakko{3,4}-\kukakko{3}\kukakko{4}
+(A-\gyaku{A})^2(\kukakko{3}+\kukakko{4})) \\
&-\frac{1}{2}(2\kukakko{1,2,3}-\kukakko{1,2}\kukakko{3}
+(A-\gyaku{A})^2(\kukakko{1,2}+\kukakko{3}))
(2\kukakko{4,5,6}-\kukakko{4}\kukakko{5,6}
+(A-\gyaku{A})^2(\kukakko{4}+\kukakko{5,6})) \\
&+(A-\gyaku{A})^2(
(c_{123}-c_{13}-c_{23})(c_{456}-c_{45}-c_{46})
- \\
&(c_{1256}-c_{125}-c_{126}-c_{156}-c_{256}+
c_{15}+c_{16}+c_{25}+c_{26})c_{34})).
\end{align*}
By these equations, 
we have $\kukakko{1,2,4}\kukakko{3,5,6}  \in (\ker \epsilon)^3  =F^6 \skein{\Sigma_{0,7}}$
and $\kukakko{1,2,4}\kukakko{3,5,6}-\kukakko{3,5,6}\kukakko{1,2,4} \in 
(\ker \epsilon)^4 =F^8 \skein{\Sigma_{0,7}}$.
This finishes the proof.
\end{proof}

\begin{cor}
\label{cor_calculation_sigma_0_5}
We have $\kukakko{1,2,3,4} \in (\ker \epsilon)^2 = F^4 \skein{\Sigma_{0,5}}$.
Furthermore, we have $\kukakko{1,2,3,4} =\frac{1}{2} (
-\kukakko{1,2}\kukakko{3,4}-\kukakko{1,4}\kukakko{2,3}
+\kukakko{1,3}\kukakko{2,4}) \mod F^5 \skein{\Sigma_{0,5}}$.
\end{cor}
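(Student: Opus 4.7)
The plan is to derive Corollary \ref{cor_calculation_sigma_0_5} from the first displayed identity in Lemma \ref{lemm_product_filtration}, specialized to indices in $\shuugou{1,2,3,4}$ so that it becomes an identity in $\skein{\Sigma_{0,5}}$, combined with elementary filtration bookkeeping based on $F^{2n} \skein{\Sigma} = (\ker \epsilon)^n$ and the recursive definition $F^{2n+1} \skein{\Sigma} = \ker \epsilon \cdot F^{2n-1} \skein{\Sigma}$.

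First I would record the filtration level of each ingredient. Each $\kukakko{i}$ and $\kukakko{i,j}$ lies in $\ker \epsilon = F^2 \skein{\Sigma_{0,5}}$ by direct evaluation at $A=-1$, while Lemma \ref{lemm_filt_induction_1}(1) places $\kukakko{i,j,k}$ in $F^3 \skein{\Sigma_{0,5}}$. Consequently $\kukakko{i,j}\kukakko{k,l} \in F^4$, $\kukakko{i}\kukakko{j,k,l} \in \ker \epsilon \cdot F^3 = F^5$, $\kukakko{i,j}\kukakko{k}\kukakko{l} \in (\ker \epsilon)^3 = F^6$, and $\kukakko{1}\kukakko{2}\kukakko{3}\kukakko{4} \in F^8$. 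On the coefficient side I would use the elementary identities
\begin{equation*}
A^{2k}+\gyaku{A}^{2k} - 2 = \gyaku{A}^{2k}(A^{2k}-1)^2 \in (A+1)^2 \Q[A^{\pm 1}] \subset F^4,
\end{equation*}
and $A^{2k}-\gyaku{A}^{2k} \in (A+1)\Q[A^{\pm 1}] \subset F^2$.

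For the first claim, I would rearrange the first identity of Lemma \ref{lemm_product_filtration} as an equality $(A^2+\gyaku{A}^2)\kukakko{1,2,3,4}=\Phi$, where $\Phi$ collects all remaining summands. Each summand in $\Phi$ already lies in $F^4 \skein{\Sigma_{0,5}}$: the terms $(A^4+\gyaku{A}^4)\kukakko{i,j}\kukakko{k,l}$ and $\kukakko{1,3}\kukakko{2,4}+\kukakko{2,4}\kukakko{1,3}$ trivially, and the rest a fortiori by the estimates above. Since $F^4$ is an ideal and $A^2+\gyaku{A}^2 = 2 + \gyaku{A}^2(A-1)^2(A+1)^2$ with $(A+1)^2 \in F^4$, multiplication by the unit part yields $2\kukakko{1,2,3,4} \equiv (A^2+\gyaku{A}^2)\kukakko{1,2,3,4} \pmod{F^8}$, and dividing by $2$ over $\Q$ gives $\kukakko{1,2,3,4} \in F^4 \skein{\Sigma_{0,5}}$.

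For the explicit formula modulo $F^5$, I would reduce the same identity further. The bracket products $\kukakko{i}\kukakko{j,k,l}$, $\kukakko{i,j}\kukakko{k}\kukakko{l}$ and $\kukakko{1}\kukakko{2}\kukakko{3}\kukakko{4}$ are all absorbed into $F^5$, while replacing $A^2+\gyaku{A}^2$ and $A^4+\gyaku{A}^4$ by $2$ costs errors in $F^4\cdot F^4 \subset F^8$. To identify $\kukakko{1,3}\kukakko{2,4}+\kukakko{2,4}\kukakko{1,3}$ with $2\kukakko{1,3}\kukakko{2,4}$ modulo $F^5$, I would apply the second displayed identity of Lemma \ref{lemm_product_filtration}: its right-hand side is $(A^2-\gyaku{A}^2) \in F^2$ times a sum whose factors of the form $2\kukakko{i,j}-\kukakko{i}\kukakko{j}+(A-\gyaku{A})^2(\kukakko{i}+\kukakko{j})$ all lie in $\ker \epsilon$, plus a $(A-\gyaku{A})^2$-correction; the whole expression therefore lies in $F^2\cdot F^4 \subset F^6 \subset F^5$. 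Collecting these reductions and dividing by $2$ yields the claimed formula.

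The main obstacle is essentially bookkeeping: one must verify that each of the many lower-order summands on the right-hand side of Lemma \ref{lemm_product_filtration}'s first identity sits deep enough in the filtration to be absorbed into $F^5$, and simultaneously track how the Laurent coefficients $A^{2k} \pm \gyaku{A}^{2k}$ degenerate modulo $(A+1)^{\text{even/odd}}$-corrections as they multiply already-filtered elements. Once these estimates are tabulated, the corollary is a direct algebraic consequence.
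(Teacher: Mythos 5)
Your overall strategy --- reading the corollary off from the two displayed identities in the proof of Lemma \ref{lemm_product_filtration}, using $F^{2n}\skein{\Sigma}=(\ker \epsilon)^n$, $F^{5}\skein{\Sigma}=\ker\epsilon\cdot F^{3}\skein{\Sigma}$ and Lemma \ref{lemm_filt_induction_1}(1) to discard the lower-order summands --- is surely the intended one (the paper gives no separate argument for this corollary), and your filtration bookkeeping is correct: $\kukakko{i,j}\kukakko{k}\kukakko{l}$, $\kukakko{i}\kukakko{j,k,l}$ and $\kukakko{1}\kukakko{2}\kukakko{3}\kukakko{4}$ all land in $F^5$, replacing $A^{2k}+A^{-2k}$ by $2$ costs only $(A+1)^2\cdot\ker\epsilon\subset F^6$, and the second identity does give $\kukakko{1,3}\kukakko{2,4}+\kukakko{2,4}\kukakko{1,3}\equiv 2\kukakko{1,3}\kukakko{2,4}$ modulo $F^6$.

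The gap is in your last sentence. Carrying out exactly the reduction you describe, the first identity collapses to
\begin{equation*}
2\kukakko{1,3}\kukakko{2,4}\equiv 2\kukakko{1,2,3,4}+2(\kukakko{1,2}\kukakko{3,4}+\kukakko{1,4}\kukakko{2,3}) \mod F^5\skein{\Sigma_{0,5}},
\end{equation*}
and dividing by $2$ yields $\kukakko{1,2,3,4}\equiv -\kukakko{1,2}\kukakko{3,4}-\kukakko{1,4}\kukakko{2,3}+\kukakko{1,3}\kukakko{2,4}$, which is \emph{twice} the right-hand side asserted in the corollary; the extra $\frac12$ never appears, and ``dividing by $2$ yields the claimed formula'' silently absorbs this discrepancy. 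The mismatch is real, and it sits in the printed identity rather than in the corollary: specializing $A=-1$ (where the skein algebra maps to the $SL_2$ trace algebra) and evaluating on a diagonal representation $r_i\mapsto\exp(hX_i)$ with $X_i=\mathrm{diag}(a_i,-a_i)$, one computes $\kukakko{1,2,3,4}\mapsto -2h^4a_1a_2a_3a_4+O(h^6)$ while $-\kukakko{1,2}\kukakko{3,4}-\kukakko{1,4}\kukakko{2,3}+\kukakko{1,3}\kukakko{2,4}\mapsto -4h^4a_1a_2a_3a_4+O(h^5)$, and the image of $F^5$ is $O(h^5)$; so the normalization with the $\frac12$ is the correct one, and the first displayed identity in the proof of Lemma \ref{lemm_product_filtration} must be off by a factor (e.g.\ the coefficient of $\kukakko{1,2,3,4}$ there should effectively be $2(A^2+A^{-2})$). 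To complete your proof you must either locate and correct that normalization or verify the quadratic identity independently. A minor secondary point: your congruence $2\kukakko{1,2,3,4}\equiv(A^2+A^{-2})\kukakko{1,2,3,4}$ modulo $F^8$ presupposes $\kukakko{1,2,3,4}\in F^4$, which is what is being proved; starting from $\kukakko{1,2,3,4}\in\ker\epsilon=F^2$ places the error term only in $F^6$, which is still sufficient for both claims.
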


\begin{cor}
We have $2\kukakko{1,2,4}\kukakko{3,5,6} =
\kukakko{1,3}\kukakko{2,5}\kukakko{4,6}
+\kukakko{1,5}\kukakko{2,6}\kukakko{3,4}
+\kukakko{1,6}\kukakko{2,3}\kukakko{4,5}
-\kukakko{1,3}\kukakko{2,6}\kukakko{4,5}
-\kukakko{2,5}\kukakko{1,6}\kukakko{3,4}
-\kukakko{1,5}\kukakko{2,3}\kukakko{4,6} \mod F^7 \skein{\Sigma_{0,7}}$.
\end{cor}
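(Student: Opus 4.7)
The plan is to extract this identity from the two equations already written out inside the proof of Lemma \ref{lemm_product_filtration}, combined with the reduction formula of Corollary \ref{cor_calculation_sigma_0_5}. Working only modulo $F^7 \skein{\Sigma_{0,7}}$ lets us suppress most of the computational weight that had to be carried in Lemma \ref{lemm_product_filtration}.

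First I would dispose of the antisymmetric part. The second equation displayed in the proof of Lemma \ref{lemm_product_filtration} writes $\kukakko{1,2,4}\kukakko{3,5,6} - \kukakko{3,5,6}\kukakko{1,2,4}$ as $(A^2 - A^{-2})$ times a sum of products of pair and triple brackets, each of which lies in (at worst) $F^5 \skein{\Sigma_{0,7}}$. Since $A^2 - A^{-2} = A^{-2}(A+1)(A-1)(A^2+1)$ contains the factor $A+1 \in \ker \epsilon = F^2 \skein{\Sigma_{0,7}}$, this commutator lies in $F^2 \skein{\Sigma_{0,7}} \cdot F^5 \skein{\Sigma_{0,7}} \subset F^7 \skein{\Sigma_{0,7}}$ by Proposition \ref{prop_product_filtration}. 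Hence
\begin{equation*}
2\kukakko{1,2,4}\kukakko{3,5,6} \equiv \kukakko{1,2,4}\kukakko{3,5,6} + \kukakko{3,5,6}\kukakko{1,2,4} \pmod{F^7 \skein{\Sigma_{0,7}}}.
\end{equation*}

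For the symmetric part I would use the first main equation in the proof of Lemma \ref{lemm_product_filtration}. The common term $(A^2+A^{-2})\kukakko{1,2,3,4,5,6}$ cancels between the two sides. All products with three or more bracket factors, such as $\kukakko{1,2}\kukakko{3}\kukakko{4}\kukakko{5,6}$ or $\kukakko{1,2,3}\kukakko{4}\kukakko{5,6}$, live in $F^8 \skein{\Sigma_{0,7}} \subset F^7 \skein{\Sigma_{0,7}}$ and drop out; similarly $\kukakko{1,2}\kukakko{3,4,5,6}$ and its cousins contain a sextuple-bracket factor that Corollary \ref{cor_calculation_sigma_0_5} reduces modulo $F^5$, after which multiplication by the remaining pair bracket lands in $F^7$. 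What survives modulo $F^7$ is the difference of sums of quadruple-times-pair products, e.g.\ $\kukakko{1,2,3,4}\kukakko{5,6}$, $\kukakko{1,2,3,6}\kukakko{4,5}$, $\kukakko{1,2,3,5}\kukakko{4,6}$, with coefficients that reduce to their values at $A = -1$ (so $(A^2+A^{-2}),(A^4+A^{-4}) \equiv 2$). Then I would apply Corollary \ref{cor_calculation_sigma_0_5} to every quadruple bracket, rewriting $\kukakko{a,b,c,d}$ as $\frac{1}{2}(-\kukakko{a,b}\kukakko{c,d} - \kukakko{a,d}\kukakko{b,c} + \kukakko{a,c}\kukakko{b,d})$ modulo $F^5$, so each $\kukakko{a,b,c,d}\kukakko{e,f}$ becomes a triple product of pair brackets modulo $F^7$.

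The main obstacle will be the combinatorial bookkeeping at the end: after applying Corollary \ref{cor_calculation_sigma_0_5} to each of the quadruple-bracket factors appearing on both sides of the symmetric equation, one must verify that the resulting signed sum of triple products of pair brackets telescopes down to precisely the six displayed terms $\kukakko{1,3}\kukakko{2,5}\kukakko{4,6}+\kukakko{1,5}\kukakko{2,6}\kukakko{3,4}+\kukakko{1,6}\kukakko{2,3}\kukakko{4,5}-\kukakko{1,3}\kukakko{2,6}\kukakko{4,5}-\kukakko{2,5}\kukakko{1,6}\kukakko{3,4}-\kukakko{1,5}\kukakko{2,3}\kukakko{4,6}$, with the overall coefficient $2$ as claimed. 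The cancellations are forced by the symmetry of the two expansions of $\kukakko{1,3}\kukakko{2,4} \pm \kukakko{2,4}\kukakko{1,3}$ used in Lemma \ref{lemm_product_filtration}, but checking that each surviving index pattern appears with the right sign is the computational heart of the argument.
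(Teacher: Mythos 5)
Your plan is essentially the paper's own proof: the paper likewise takes the two identities displayed in the proof of Lemma \ref{lemm_product_filtration}, discards the commutator $\kukakko{1,2,4}\kukakko{3,5,6}-\kukakko{3,5,6}\kukakko{1,2,4}$ modulo $F^7\skein{\Sigma_{0,7}}$, and reduces the symmetric identity via Corollary \ref{cor_calculation_sigma_0_5} to a congruence among triple products of pair brackets, from which the stated formula follows by cancellation. The only caveats are that you leave the final sign bookkeeping unexecuted (the paper simply records the resulting congruence) and two small slips of wording --- $\kukakko{3,4,5,6}$ is a quadruple, not sextuple, bracket, and terms such as $\kukakko{1,2}\kukakko{3,4,5,6}$ lie in $F^6$, so they survive and contribute rather than dropping, exactly as your subsequent sentence correctly uses.
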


\begin{proof}
By the proof of Lemma \ref{lemm_product_filtration}, we have
\begin{align*}
& -\kukakko{1,2}\kukakko{5,6}\kukakko{3,4}
-\kukakko{1,6}\kukakko{2,5}\kukakko{3,4}
+\kukakko{1,5}\kukakko{2,6}\kukakko{3,4} \\
&-2 \kukakko{1,2,4}\kukakko{3,5,6}+2\kukakko{1,2}\kukakko{3,4}\kukakko{5,6} \\
& -\kukakko{1,2}\kukakko{3,4}\kukakko{5,6}
-\kukakko{1,2}\kukakko{3,6}\kukakko{4,5}
+\kukakko{1,2}\kukakko{3,5}\kukakko{4,6} \\
&= -\kukakko{1,2}\kukakko{3,6}\kukakko{4,5}
-\kukakko{1,6}\kukakko{2,3}\kukakko{4,5}
+\kukakko{1,3}\kukakko{2,6}\kukakko{4,5} \\
&+\kukakko{1,2}\kukakko{3,5}\kukakko{4,6}
+\kukakko{1,5}\kukakko{2,3}\kukakko{4,6}
-\kukakko{1,3}\kukakko{2,5}\kukakko{4,6} \mod F^7 \skein{\Sigma_{0,7}}.
\end{align*}
This proves the corollary.

\end{proof}

\begin{proof}[Proof of Proposition \ref{prop_product_filtration}]
By Lemma \ref{lemm_filt_douti}, we have
\begin{equation*}
F^2 \skein{\Sigma} F^3 \skein{\Sigma} =F^3 \skein{\Sigma} F^2 \skein{\Sigma}
=F^{\star 5 }\skein{\Sigma} =F^5 \skein{\Sigma}.
\end{equation*}
Using this equation, if one of $n, m \in \Z_{\geq 0}$ is even number,
we have
\begin{equation*}
F^n \skein{\Sigma} F^m \skein{\Sigma} \subset F^{n+m} \skein{\Sigma}.
\end{equation*}
It is enough to show 
\begin{equation*}
F^{2n+1} \skein{\Sigma} F^{2m+1} \skein{\Sigma}
\subset F^{2n+2m+2} \skein{\Sigma}.
\end{equation*}
We choose two embedding
\begin{align*}
&\iota_1 :\Sigma_{0,2} \times I\times \shuugou{1,2,\cdots,N} \coprod \Sigma_{0,4} \times I 
\to \Sigma \times [\frac{1}{2},1], \\
&\iota_2 :\Sigma_{0,2} \times I\times \shuugou{1,2,\cdots,M} \coprod \Sigma_{0,4} \times I 
\to \Sigma \times [0,\frac{1}{2}]. \\
\end{align*}

Since 
$
F^{\star 2n+2m+2} \skein{\Sigma} =F^{2n+2m+2} \skein{\Sigma}$,
it is enough to show
\begin{equation*}
\iota_1 (\oplus^{n-1} \nu \oplus \oplus^{N-n+1} c_1 \oplus \eta)
\iota_2(\oplus^{m-1} \nu \oplus  \oplus^{M-m+1} c_1 \oplus \eta)
\in F^{\star 2n+2m+2}.
\end{equation*}
Choose an embedding
\begin{equation*}
\iota:\Sigma_{0,2} \times \shuugou{1,2, \cdots ,N+M} \coprod
\Sigma_{0,7} \times I \to \Sigma \times I
\end{equation*}
satisfying 
\begin{align*}
&\iota_{|\Sigma_{0,2} \times I \times \shuugou{n}} 
=\iota_{1| \Sigma_{0.2} \times I \times \shuugou{n}} \\
&\iota_{|\Sigma_{0,2} \times I \times \shuugou{N+m}}
= \iota_{2|\Sigma_{0,2} \times I\times \shuugou{m}}
\end{align*}
for any $n \in \shuugou{1,2, \cdots, N}$
and $m \in \shuugou{1,2, \cdots,M}$ and
\begin{align*}
&\iota_1 (\oplus^{n-1} \nu \oplus \oplus^{N-n+1} c_1 \oplus \eta)
\iota_2(\oplus^{m-1} \nu \oplus  \oplus^{M-m+1} c_1 \oplus \eta) \\
&=
\iota ((\oplus^{n-1} \nu \oplus \oplus^{N-n+1} c_1) \oplus
(\oplus^{m-1} \nu \oplus  \oplus^{M-m+1} c_1) \oplus (\kukakko{1,2,4}
\kukakko{3,5,6})).
\end{align*}
By Lemma \ref{lemm_product_filtration},
we have
\begin{equation*}
\iota_1 (\oplus^{n-1} \nu \oplus \oplus^{N-n+1} c_1 \oplus \eta)
\iota_2(\oplus^{m-1} \nu \oplus  \oplus^{M-m+1} c_1 \oplus \eta)
\in F^{\star 2n+2m+2}
\end{equation*}
This proves the theorem.

\end{proof}

\subsection{The Goldman Lie algebra and its filtration}
We first review some classical facts about the Goldman Lie algebra of $\Sigma$
and the group ring of $\pi_1 (\Sigma)$. Fix a base point $*,*'$ of $\partial \Sigma$.
We denote by $\pi_1 (\Sigma, *)$ the fundamental group of $\Sigma$.
and by $\hat{\pi} (\Sigma)$ the set of conjugacy classes of  $\pi_1(\Sigma, *)$.
Furthermore, we denote by $\pi_1 (\Sigma, *, *')$ the fundamental 
groupoid from $*$ to $*'$.
Let $\zettaiti{\cdot}:\pi_1(\Sigma ,*) \to \hat{\pi}(\Sigma)$
be the quotient map.

We consider the action $\sigma_\pi :\Q \hat{\pi} (\Sigma) \times
\Q \pi (\Sigma,*,*') \to \Q \pi (\Sigma,*,*')$ defined by
\begin{equation*}
\sigma_\pi(\zettaiti{x})(r) \defeq 
\sum_{p \in x \cap r} \epsilon(p,x,r)
r_{*_1 p}x_p r_{p *_2}
\end{equation*}
for $\zettaiti{x} \in \hat{\pi} (\Sigma) $ and $r \in \pi (\Sigma,*_1, *_2)$ in general position.
For details, see \cite{Kawazumi} Definition 3.2.1.
We remark $[x, \zettaiti{r}] =\zettaiti{\sigma (x)(r)}$
 for $x \in \Q \hat{\pi}$ and $ y \in \Q \pi_1 (\Sigma ,*)$.

We denote by $\epsilon_\pi: \Q \pi_1(\Sigma, *) \to \Q$ the augmentation map
defined by $x \in \pi_1 (\Sigma, *) \mapsto 1$.

\begin{prop}[\cite{Kawazumi} Theorem 4.1.2]
\label{prop_Goldman_bracket}
We have
\begin{align*}
&\sigma (\zettaiti{(\ker \epsilon_\pi)^n}) 
((\ker \epsilon_\pi)^m (\Q \pi_1 (\Sigma, *, *'))) \subset
(\ker \epsilon_\pi)^{n+m-2} (\Q \pi_1 (\Sigma, *,*') \\
&[\zettaiti{(\ker \epsilon_\pi)^n}, \zettaiti{(\ker \epsilon_\pi)^m}]
\subset \zettaiti{(\ker \epsilon_\pi)^{n+m-2}}
\end{align*}
for any $n$ and $m$.
Furthermore, we have
\begin{align*}
&\sigma (\zettaiti{(\ker \epsilon_\pi)^n}) 
(\Q \pi_1 (\Sigma, *, *')) \subset
(\ker \epsilon_\pi)^{n-1} (\Q \pi_1 (\Sigma, *,*')
\end{align*}
for any $n$ and $m$.
\end{prop}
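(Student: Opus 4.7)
The plan is to prove both filtration containments by reducing to explicit generators of the augmentation ideal and computing via the intersection definition of $\sigma_\pi$, with the crucial input being an inclusion-exclusion identity on conjugacy classes. The bracket bound will then follow from the identity $[\zettaiti{x},\zettaiti{r}]=\zettaiti{\sigma_\pi(\zettaiti{x})(r)}$ recalled immediately before the statement.

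Since $(\ker \epsilon_\pi)^n$ is $\Q$-spanned by products $u=(g_1-1)\cdots(g_n-1)$ with $g_i\in\pi_1(\Sigma,*)$, and $(\ker \epsilon_\pi)^m\Q\pi_1(\Sigma,*,*')$ is spanned by elements of the form $(h_1-1)\cdots(h_m-1)\cdot s$, linearity reduces the problem to these generators. Partitioning $x\cap(r_1 r_2)=(x\cap r_1)\sqcup(x\cap r_2)$ in the intersection sum gives the Leibniz rule
\begin{equation*}
\sigma_\pi(\zettaiti{x})(r_1 r_2)=\sigma_\pi(\zettaiti{x})(r_1)\cdot r_2+r_1\cdot\sigma_\pi(\zettaiti{x})(r_2),
\end{equation*}
and using it recursively on $(h_1-1)\cdots(h_m-1)s$, together with $\sigma_\pi(\zettaiti{u})(h_j-1)=\sigma_\pi(\zettaiti{u})(h_j)$, reduces the first containment to an induction on $m$ whose base case is the Furthermore claim: each step either leaves an $(h_j-1)$ factor intact on the output or replaces it using Furthermore, yielding an overall gain of $n+m-2$ augmentation factors.

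The heart of the argument is therefore the Furthermore statement: for $u=(g_1-1)\cdots(g_n-1)$ and $r\in\pi_1(\Sigma,*,*')$, one has $\sigma_\pi(\zettaiti{u})(r)\in(\ker\epsilon_\pi)^{n-1}\Q\pi_1(\Sigma,*,*')$. Expand $\zettaiti{u}=\sum_{S\subset\{1,\ldots,n\}}(-1)^{n-|S|}\zettaiti{g_S}$ where $g_S$ denotes the product of the $g_j$ with $j\in S$ taken in increasing order. Choosing this concatenation as the loop representative of $\zettaiti{g_S}$, the intersection formula gives
\begin{equation*}
\sigma_\pi(\zettaiti{g_S})(r)=\sum_{i\in S}\sum_{p\in g_i\cap r}\epsilon(p,g_i,r)\,r_{*p}\,g_i^{(2)}g_{S_{>i}}g_{S_{<i}}g_i^{(1)}\,r_{p*'},
\end{equation*}
where $g_i=g_i^{(1)}g_i^{(2)}$ splits at $p$ and $S_{<i},S_{>i}$ denote the parts of $S$ less than or greater than $i$. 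Interchanging sums and fixing $(i,p)$ with $p\in g_i\cap r$, the sum over $S\ni i$ factors as an independent sum over $S_{<i}\subset\{1,\ldots,i-1\}$ and $S_{>i}\subset\{i+1,\ldots,n\}$; by the identity $\sum_{T\subset\{a,\ldots,b\}}(-1)^{(b-a+1)-|T|}g_T=(g_a-1)\cdots(g_b-1)$, the two factors collapse to the single expression
\begin{equation*}
(r_{*p}g_i^{(2)})\cdot(g_{i+1}-1)\cdots(g_n-1)(g_1-1)\cdots(g_{i-1}-1)\cdot(g_i^{(1)}r_{p*'}).
\end{equation*}
The middle bracket lies in the two-sided ideal $(\ker\epsilon_\pi)^{n-1}$ of $\Q\pi_1(\Sigma,*)$, so multiplication on the left by $r_{*p}g_i^{(2)}\in\pi_1(\Sigma,*)$ and on the right by $g_i^{(1)}r_{p*'}\in\pi_1(\Sigma,*,*')$ keeps the term in $(\ker\epsilon_\pi)^{n-1}\Q\pi_1(\Sigma,*,*')$. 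Summing over $i$ and $p$ completes the Furthermore statement.

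The main obstacle is this inclusion-exclusion collapse. The difficulty is that $\zettaiti{g_S}$ has no canonical loop representative (its concatenation can be cyclically permuted), and the intersection decomposition depends on the choice. The clean way to handle this is to adopt, for each pair $(i,S)$ with $i\in S$, the representative starting just after $g_i$, which amounts to using the conjugacy $\zettaiti{g_S}=\zettaiti{(g_{S_{>i}}g_{S_{<i}})g_i}$; then $(g_S)^{(1)}$ and $(g_S)^{(2)}$ take the canonical forms $g_{S_{<i}}g_i^{(1)}$ and $g_i^{(2)}g_{S_{>i}}$, and the sign bookkeeping $(-1)^{n-|S|}=(-1)^{(n-i)-|S_{>i}|}\cdot(-1)^{(i-1)-|S_{<i}|}$ factors unambiguously, producing the $(g_j-1)$ factors in the correct cyclic order.
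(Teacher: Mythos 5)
The paper offers no proof of this proposition: it is imported verbatim from the cited source (Kawazumi–Kuno, Theorem 4.1.2), so the only "internal proof" is the citation itself. Your argument is a correct reconstruction of the standard proof from that source — the derivation (Leibniz) property of $\sigma_\pi$ reduces the first containment to the case of a single path, and the inclusion–exclusion collapse of $\sum_{S}(-1)^{n-|S|}|g_S|$ at a fixed intersection point $p\in g_i\cap r$ is precisely the mechanism that leaves the $n-1$ factors $(g_{i+1}-1)\cdots(g_{i-1}-1)$ intact, with the bracket bound then following from $[|x|,|r|]=|\sigma_\pi(|x|)(r)|$ — so there is nothing to add beyond noting that your careful treatment of the cyclic representative matches the bookkeeping in the reference.
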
 

We denote by $H_1 \defeq H_1 (\Sigma, \Q) H$.
Using  a Magnus expansion,
we have the following.
See, for example, \cite{Kawazumi05} \cite{KK}.

\begin{prop}
The following $\Q$ linear map 
\begin{align*}
& R(n) :(\ker \epsilon_\pi)^n \Q \pi_1 (\Sigma, *,*') /(\ker \epsilon_\pi)^{n+1}
\Q \pi_1 (\Sigma, *,*') \to H_1^{\otimes n},
(x_1-1) \cdots (x_n-1)r \mapsto [x_1] \otimes \cdots \otimes [x_n] \\
&C(n) :\zettaiti{(\ker \epsilon_\pi)^n}/\zettaiti{(\ker \epsilon_\pi)^{n+1}} \to c(H_1^{\otimes n}),
\zettaiti{(x_1-1) \cdots (x_n-1)} 
\mapsto c([x_1] \otimes \cdots \otimes [x_n]) \\
\end{align*}
are well defined and isomorphisms
where $c:H_1^{\otimes n} \to H_1^{\otimes n}$ is defined by
$c([x_1] \otimes \cdots \otimes [x_n]) = \sum_{i=1}^n [x_i] \otimes [x_{i+1}]
\otimes \cdots \otimes  [x_n] \otimes [x_1] \otimes [x_2] \otimes 
\cdots \otimes [x_{i-1}]$.
\end{prop}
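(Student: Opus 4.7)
The plan is to deduce both statements from the classical theory of the Magnus expansion of a free group, combined with a book-keeping argument that turns conjugation classes into cyclic symmetrization.

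Since $\partial \Sigma \neq \emptyset$, the fundamental group $\pi_1(\Sigma,*)$ is free on the chosen generators $x_1,\dots,x_M$. I would first fix the Magnus expansion $\theta\colon \pi_1(\Sigma,*) \to \widehat{T}(H_1)$ into the completed tensor algebra on $H_1$ defined by $\theta(x_i) \defeq 1 + [x_i]$, extended multiplicatively, and then $\Q$-linearly to $\theta\colon \Q\pi_1(\Sigma,*) \to \widehat{T}(H_1)$. A standard computation (by induction on word length, using $\theta(x^{-1}) = 1 - [x] + [x]^{\otimes 2} - \cdots$) shows that $\theta$ is filtration-preserving and that for $y_1,\dots,y_n \in \pi_1(\Sigma,*)$ one has $\theta((y_1-1)\cdots(y_n-1)) \equiv [y_1]\otimes\cdots\otimes[y_n] \pmod{\text{degree}\ \geq n+1}$. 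Since $\pi_1(\Sigma,*)$ is free, $\theta$ descends to an isomorphism $\mathrm{gr}\,\Q\pi_1(\Sigma,*) \xrightarrow{\sim} T(H_1)$ of graded algebras, which is the well-definedness and bijectivity of $R(n)$ for the group ring.

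To pass from $\Q\pi_1(\Sigma,*)$ to the groupoid ring $\Q\pi_1(\Sigma,*,*')$, fix a reference path $\gamma_0 \in \pi_1(\Sigma,*,*')$. Right multiplication by $\gamma_0$ furnishes a $\Q$-linear bijection $\Q\pi_1(\Sigma,*) \to \Q\pi_1(\Sigma,*,*')$, which is in fact an isomorphism of left $\Q\pi_1(\Sigma,*)$-modules and therefore preserves the filtration by $(\ker\epsilon_\pi)^n$. Transporting the Magnus isomorphism through this identification gives $R(n)$, and one checks that the assignment $(x_1-1)\cdots(x_n-1)r \mapsto [x_1]\otimes\cdots\otimes[x_n]$ is independent of the decomposition because (i) the class $[x_i]\in H_1$ depends only on $x_i$ modulo the commutator subgroup, and modifying $x_i$ by a commutator alters the original product only modulo $(\ker\epsilon_\pi)^{n+1}$, and (ii) the residual factor $r$, being a $\Q$-linear combination of group elements, contributes only terms in higher filtration.

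For $C(n)$, the map $|\cdot|\colon \Q\pi_1(\Sigma,*) \to \Q\hat{\pi}(\Sigma)$ is the projection onto the coinvariants of the conjugation action, and it preserves the augmentation filtration, so it induces a surjection on the associated graded. The key point is to compute the conjugation action on the graded piece $H_1^{\otimes n}$ under $R(n)$: for $y\in\pi_1(\Sigma,*)$, the identity
\[
y(x_1-1)\cdots(x_n-1)y^{-1} = (yx_1y^{-1}-1)\cdots(yx_ny^{-1}-1)
\]
shows that the conjugation acts trivially on $\mathrm{gr}_n$ (each $[yx_iy^{-1}] = [x_i]$ in $H_1$), but the combinatorial rewriting $(x_1-1)\cdots(x_n-1)y = \sum$ (shift of $y$ across the factors, which produces cyclic permutations) shows, once one passes to conjugacy classes, that cyclically permuted monomials are identified. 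Concretely, $|(x_1-1)(x_2-1)\cdots(x_n-1)| \equiv |(x_ix_{i+1}\cdots x_nx_1\cdots x_{i-1}-1)(\cdots)| \pmod{(\ker\epsilon_\pi)^{n+1}}$ for each $i$. Hence $\mathrm{gr}_n\bigl(\Q\hat{\pi}(\Sigma)\bigr)$ is the cyclic coinvariants of $H_1^{\otimes n}$, and taking a cyclic average $\frac1n c$ (legal in characteristic $0$) identifies these coinvariants with the cyclic invariants $c(H_1^{\otimes n})$, establishing well-definedness and bijectivity of $C(n)$.

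The main technical obstacle is the conjugation-becomes-cyclic-shift computation; one has to be careful about the order of terms and the contribution of lower-degree commutators. Once this is done correctly on the associated graded, the characteristic-$0$ averaging argument makes $C(n)$ fall into place.
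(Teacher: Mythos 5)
The paper gives no proof of this proposition at all---it simply says ``Using a Magnus expansion'' and cites \cite{Kawazumi05} and \cite{KK}---and your sketch is exactly the standard Magnus-expansion argument those references carry out (freeness of $\pi_1(\Sigma,*)$, $\theta((y_1-1)\cdots(y_n-1))\equiv [y_1]\otimes\cdots\otimes[y_n]$ in lowest degree, transport to $\Q\pi_1(\Sigma,*,*')$ by a fixed reference path, and cyclic symmetrization for conjugacy classes), so the approach matches. The one step you flag but leave open (well-definedness/injectivity of $C(n)$, i.e.\ that nothing beyond cyclic shifts dies in the associated graded of $\Q\hat{\pi}(\Sigma)$) is most cleanly closed by observing that the Magnus expansion itself descends to conjugacy classes: $\theta(ab-ba)=\theta(a)\theta(b)-\theta(b)\theta(a)$ vanishes degree by degree once each $H_1^{\otimes n}$ is reduced modulo cyclic permutation, which produces the inverse to your surjection and finishes the argument.
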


Let $\mu :H_1 \times H_1 \to \Q$ be the intersection form.
By the above proposition,
the action $\sigma_\pi$ induces the action $\sigma_{\pi,n,m}:
\zettaiti{(\ker \epsilon_\pi)^n}/\zettaiti{(\ker \epsilon_\pi)^{n+1}} \times
(\ker \epsilon_\pi)^m/(\ker \epsilon_\pi)^{m+1} \to
(\ker \epsilon_\pi)^{n+m-2}/(\ker \epsilon_\pi)^{n+m-1}$
 and the bracket $[,]$ of $\Q \hat{\pi} (\Sigma)$
induces the bracket $b_{\pi,n,m}:
\zettaiti{(\ker \epsilon_\pi)^n}/\zettaiti{(\ker \epsilon_\pi)^{n+1}} \times
\zettaiti{(\ker \epsilon_\pi)^m}/\zettaiti{(\ker \epsilon_\pi)^{m+1}} \to
\zettaiti{(\ker \epsilon_\pi)^{n+m-2}}/\zettaiti{(\ker \epsilon_\pi)^{n+m-1}}$.
We denote $\sigma'_{\pi,n,m} \defeq R(n+m-2) \circ \sigma_{\pi,n,m} 
\circ (C(n)^{-1} \times R(m)^{-1})$ and $b'_{\pi,n,m} \defeq
C(n+m-2) \circ b_{\pi,n,m} \circ (C(n)^{-1} \times C(m)^{-1})$
\begin{prop}[\cite{Kawazumi05},\cite{KK}]
\label{prop_Goldman_Lie_filtration}
We have
\begin{align*}
&\sigma'_{\pi, n,m} (c(a_1 \otimes  \cdots \otimes a_n))
(b_1 \otimes \cdots \otimes b_m) \\
&=\sum_{i=1}^n \sum_{j=1}^m
\mu(a_i,b_j) b_1 \otimes \cdots \otimes b_{j-1} 
\otimes a_{i+1} \otimes  \cdots \otimes a_n \otimes a_1
\otimes \cdots \otimes a_{i-1} \otimes b_{j+1}
\otimes \cdots \otimes b_m \\
&\sigma'_{\pi, n,m} (c(a_1 \otimes  \cdots \otimes a_n))
(c(b_1 \otimes \cdots \otimes b_m)) \\
&=\sum_{i=1}^n \sum_{j=1}^m
\mu(a_i,b_j) c(b_1 \otimes \cdots \otimes b_{j-1} 
\otimes a_{i+1} \otimes  \cdots \otimes a_n \otimes a_1
\otimes \cdots \otimes a_{i-1} \otimes b_{j+1}
\otimes \cdots \otimes b_m).
\end{align*}
\end{prop}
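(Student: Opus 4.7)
The plan is to reduce both identities to a direct intersection-point calculation on carefully chosen geometric representatives, using a Magnus-type expansion to identify the associated graded pieces with tensor powers of $H_1$.

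First, I would pick disjoint based simple loops $x_1,\ldots,x_n$ and $y_1,\ldots,y_m$ at $*$ (slightly perturbed near $*$ to be in general position in the interior) with $[x_i]=a_i$ and $[y_j]=b_j$ in $H_1$, and fix a path $r_0\in\pi_1(\Sigma,*,*')$ disjoint from these loops except at endpoints. By definition of $R(m)$ and $C(n)$, the element $(y_1-1)\cdots(y_m-1)r_0$ represents $b_1\otimes\cdots\otimes b_m\in H_1^{\otimes m}$, and $\zettaiti{(x_1-1)\cdots(x_n-1)}$ represents $c(a_1\otimes\cdots\otimes a_n)\in c(H_1^{\otimes n})$.

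Next, I would expand both products by bilinearity and apply the geometric definition of $\sigma_\pi$ to each pair of representative loops $(\zettaiti{\prod_{i\in S}x_i},\prod_{j\in T}y_j\cdot r_0)$. Every contribution comes from some intersection $p\in x_i\cap y_j$; after collecting the signed sums over $S$ and $T$, each such $p$ contributes the single term
\begin{equation*}
\epsilon(p,x_i,y_j)\cdot(y_1-1)\cdots(y_{j-1}-1)\cdot(y_j)_{*,p}\,x_p\,(y_j)_{p,*}\cdot(y_{j+1}-1)\cdots(y_m-1)\cdot r_0,
\end{equation*}
where $x_p=(x_i)_{p,*}\cdot x_{i+1}\cdots x_n x_1\cdots x_{i-1}\cdot(x_i)_{*,p}$ is the basepoint-shifted loop at $p$, exhibiting the cyclic rotation of the $a$-tensor starting at position $i+1$.

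Finally, modulo $(\ker\epsilon_\pi)^{n+m-1}\Q\pi_1(\Sigma,*,*')$, each partial arc $(x_i)_{*,p},(x_i)_{p,*},(y_j)_{*,p},(y_j)_{p,*}$ may be replaced by $1$: they have augmentation $1$, and any cross term produced by keeping a factor $(y_j)_{*,p}-1\in\ker\epsilon_\pi$ would, multiplied against the remaining $n+m-2$ factors in $\ker\epsilon_\pi$, lie in $(\ker\epsilon_\pi)^{n+m-1}$. Using $\sum_{p\in x_i\cap y_j}\epsilon(p,x_i,y_j)=\mu(a_i,b_j)$ then produces the first formula. The second formula follows by applying $\zettaiti{\cdot}$ and using the identity $\zettaiti{\sigma_\pi(\zettaiti{x})(r)}=[\zettaiti{x},\zettaiti{r}]$ from Proposition \ref{prop_Goldman_bracket}: the projection $\zettaiti{\cdot}$ cyclicizes the tensor factor coming from the $b$-variable, replacing the target $H_1^{\otimes(n+m-2)}$ by $c(H_1^{\otimes(n+m-2)})$.

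The main technical obstacle is the filtration-reduction step: the replacement of partial arcs by $1$ must be justified by careful tracking of augmentation degrees, including the cyclic rotation of the $x_i$'s when the basepoint of $x$ is shifted from $*$ to $p$ and back, which a priori produces conjugation correction terms. The filtration bound from Proposition \ref{prop_Goldman_bracket} ensures that all such corrections lie in $(\ker\epsilon_\pi)^{n+m-1}$ and therefore drop out in the associated graded.
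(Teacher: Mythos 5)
The paper does not prove this proposition at all: it is quoted from \cite{Kawazumi05} and \cite{KK} and used as a black box, so there is no in-paper argument to compare yours against. Your reconstruction is essentially the standard proof from those references and is correct in substance: represent $c(a_1\otimes\cdots\otimes a_n)$ and $b_1\otimes\cdots\otimes b_m$ by $\zettaiti{(x_1-1)\cdots(x_n-1)}$ and $(y_1-1)\cdots(y_m-1)r_0$ with the loops in general position, expand multilinearly, apply the geometric formula for $\sigma_\pi$ at each intersection point, and kill the arc corrections in the associated graded. The key justification — that the two arcs recombine into loops $u=(y_j)_{*,p}(x_i)_{p,*}$ and $v=(x_i)_{*,p}(y_j)_{p,*}$ at the base point with $\epsilon_\pi(u)=\epsilon_\pi(v)=1$, so that $u=1+(u-1)$ and $v=1+(v-1)$ contribute corrections with at least $n+m-1$ factors in $\ker\epsilon_\pi$ — is exactly right, as is the identification $\sum_{p\in x_i\cap y_j}\epsilon(p,x_i,y_j)=\mu(a_i,b_j)$ and the passage to the second formula via $\zettaiti{\sigma_\pi(\zettaiti{x})(r)}=[\zettaiti{x},\zettaiti{r}]$.

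One notational slip worth fixing: in your displayed ``single term'' for the contribution of $p$, the middle factor $x_p$ is written with plain letters $x_{i+1}\cdots x_n x_1\cdots x_{i-1}$, whereas the signed sum over subsets $S\ni i$ actually produces $(x_i)_{p,*}(x_{i+1}-1)\cdots(x_n-1)(x_1-1)\cdots(x_{i-1}-1)(x_i)_{*,p}$. Taken literally, your displayed expression has only $m-1$ factors in $\ker\epsilon_\pi$ and would not land in the correct graded piece nor map to the claimed tensor; the $(x_{i'}-1)$ factors are what make the term lie in $(\ker\epsilon_\pi)^{n+m-2}\Q\pi_1(\Sigma,*,*')$ and give the tensor factors $a_{i+1}\otimes\cdots\otimes a_{i-1}$ under $R(n+m-2)$. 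Since your surrounding prose (``collecting the signed sums over $S$ and $T$'') and your final conclusion both presuppose the corrected form, this is a typo rather than a gap, but it should be repaired. Also, ``disjoint based simple loops'' should read that the $x_{i'}$ are mutually disjoint away from $*$, likewise the $y_{j'}$, with the two families transverse to each other.
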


\subsection{The bracket and the filtration}

In this subsection, we prove the proposition.

\begin{prop}
\label{prop_bracket_filtration}
We have $[F^n \skein \Sigma, F^m \skein \Sigma] \subset  F^{n+m-2} \skein{\Sigma}$.
\end{prop}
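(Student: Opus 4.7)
I will induct on $n+m$ using the Leibniz rule $[x, yz] = [x,y]z + y[x,z]$, Proposition \ref{prop_product_filtration}, and the recursion $F^{2k+1} \skein{\Sigma} = F^2 \skein{\Sigma} \cdot F^{2k-1} \skein{\Sigma}$ for $k \geq 2$. The induction reduces the claim to three base cases: the even-even inclusion $[F^{2a}, F^{2b}] \subset F^{2a+2b-2}$ (immediate from $F^{2c} \skein{\Sigma} = (\ker \epsilon)^c$ together with \cite{TsujiCSAI} Lemma 3.11), the odd-odd base $[F^3, F^3] \subset F^4$, and the bridging case $[F^2, F^3] \subset F^3$.

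For $[F^3, F^3] \subset F^4$: by Lemma \ref{lemm_filt_induction_1}, modulo $F^4$ every element of $F^3$ is a sum of generators $\iota_*(\eta)$ for embeddings $\iota:\Sigma_{0,4} \times I \hookrightarrow \Sigma \times I$, and the cross-terms with $F^4$ lie in $[F^3, F^4] \subset [F^2, F^4] \subset F^4$ by the even-even case. By Theorem \ref{thm_filtration_independent} the two embeddings can be placed inside a standard $\Sigma_{0,7} \times I$-model where $\iota_1(\eta), \iota_2(\eta)$ correspond to $\kukakko{1,2,4}$ and $\kukakko{3,5,6}$. The second displayed identity in the proof of Lemma \ref{lemm_product_filtration} then yields
\[
\kukakko{1,2,4}\kukakko{3,5,6} - \kukakko{3,5,6}\kukakko{1,2,4} = (A^2 - \gyaku{A}^2)\, g,
\]
with $g$ visibly in $F^4$: each summand is either a product of $\kukakko{\cdot,\cdot}$'s lying in $F^2 \cdot F^2 \subset F^4$, or of the form $(A - \gyaku{A})^2 \cdot (\text{$F^2$-element})$, and $(A-\gyaku A)^2 \in (A+1)^2 \Q[A^{\pm 1}] \subset F^4$. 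Dividing by $-A + \gyaku{A}$ and using $(A^2 - \gyaku{A}^2)/(-A + \gyaku{A}) = -(A + \gyaku{A})$, a unit modulo $\ker \epsilon$, we conclude $[\iota_1(\eta), \iota_2(\eta)] \in F^4$.

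For $[F^2, F^3] \subset F^3$: from Lemma \ref{lemm_generator} and Theorem \ref{thm_basis} I decompose $F^2 \skein{\Sigma} = \kappa(\Q \pi_\square(\Sigma)) + (A+1)\skein{\Sigma} + F^3 \skein{\Sigma}$ and treat each summand separately. The contribution from $(A+1) w$ gives $(A+1)[w, y] \in (A+1) \cdot \ker \epsilon \subset F^2 \cdot F^2 \subset F^4$, since $A+1$ is central. The $F^3$-contribution lies in $[F^3, F^3] \subset F^4$ by the previous base case. The $\kappa(z)$-contribution is handled through Theorem \ref{thm_kappa}: identifying $y \bmod F^4$ with an element of $\im \lambda \subset \kappa(\Q \pi_\square)$ via Theorem \ref{thm_basis}, the image of $[\kappa(z), y]$ in $F^2/F^4$ equals $\kappa([z, w]_{\text{Goldman}})$, which by Proposition \ref{prop_Goldman_Lie_filtration} lies in Goldman degree $\geq 3$, hence in $\im \lambda = F^3/F^4$. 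Therefore $[\kappa(z), y] \in F^3$.

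The inductive step is then a systematic Leibniz unpacking. For $m \geq 5$ odd, factor $F^m = F^2 F^{m-2}$ so that $[F^n, F^m] \subset [F^n, F^2] F^{m-2} + F^2 [F^n, F^{m-2}]$, each summand bounded by a base case or smaller-index bracket combined with Proposition \ref{prop_product_filtration}. Odd-odd cases with $\max(n,m) \geq 5$ factor via $F^n = F^{n-2} F^3$ and reduce similarly. \emph{Main obstacle.} The two substantive base cases are the heart of the argument. The case $[F^3, F^3] \subset F^4$ requires both the topological reduction via Theorem \ref{thm_filtration_independent} to the $\Sigma_{0,7}$ model---which is subtle when the two embeddings $\iota_1, \iota_2$ of $\Sigma_{0,4}$ intersect inside $\Sigma \times I$---and the explicit seven-hole-sphere calculation already carried out in Lemma \ref{lemm_product_filtration}. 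The case $[F^2, F^3] \subset F^3$ hinges on precisely matching the Goldman-bracket filtration on $\Q \pi_\square(\Sigma)$ with the skein $F$-filtration through $\kappa$, using the identification $F^3/F^4 \cong \im \lambda$ from Theorem \ref{thm_basis} as the crucial ingredient.
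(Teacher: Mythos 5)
Your proof is correct, and its skeleton (Leibniz rule reducing everything to the three base cases $[F^2,F^2]\subset F^2$, $[F^2,F^3]\subset F^3$, $[F^3,F^3]\subset F^4$) is exactly the paper's reduction to Lemma \ref{lemm_bracket_filtration}; your treatment of $[F^2,F^3]\subset F^3$ via $\kappa$ and the Goldman filtration also coincides with the paper's. Where you genuinely diverge is the base case $[F^3,F^3]\subset F^4$. The paper stays entirely inside the Goldman Lie algebra: writing $F^3/F^4=\im\lambda=\kappa(\zettaiti{(\ker\epsilon_\pi)^3}_\square)$, it gets $[\im\lambda,\im\lambda]\subset\kappa(\zettaiti{(\ker\epsilon_\pi)^4}_\square)=0 \bmod (\ker\epsilon)^2$ from Theorem \ref{thm_kappa}, Proposition \ref{prop_Goldman_bracket}, and the vanishing $\kukakko{(a-1)(b-1)(c-1)(d-1)}=0$ of equation (\ref{equation_calc_2}) --- no new skein computation is needed. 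You instead reduce to generators $\iota_*(\eta)$, place the two copies of $\Sigma_{0,4}\times I$ into a common $\Sigma_{0,7}\times I$, and read off $\kukakko{1,2,4}\kukakko{3,5,6}-\kukakko{3,5,6}\kukakko{1,2,4}=(A^2-\gyaku{A}^2)g$ with $g\in F^4$ from the explicit identity in Lemma \ref{lemm_product_filtration}; dividing by $-A+\gyaku{A}$ gives the unit $-(A+\gyaku{A})$ times $g$, so the bracket lands in $F^4$. This works (and is even slightly stronger, since the computation actually shows the commutator $xy-yx$ lies in $(\ker\epsilon)^4$), but it imports the heaviest computation in the paper and the topological step of realizing two arbitrary embeddings of $\Sigma_{0,4}\times I$ inside one $\Sigma_{0,7}\times I$ --- for which the right reference is the embedding construction in the proof of Proposition \ref{prop_product_filtration}, not Theorem \ref{thm_filtration_independent}, which only concerns diffeomorphisms of the whole thickened surface. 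The paper's algebraic route is cheaper for this containment; your route has the merit of making the two odd base cases run on machinery already needed for Proposition \ref{prop_product_filtration}. One small imprecision worth fixing: in your $[F^2,F^3]$ step you should note (via equation (\ref{equation_calc_5}), $\kukakko{(x-1)(x-1)}=2\kukakko{x}$) that every class in $\kappa(\Q\pi_\square(\Sigma))$ is already represented in Goldman degree $2$, since otherwise Proposition \ref{prop_Goldman_bracket} with $n=0$, $m=3$ would only give degree $1$.
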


In order to prove this proposition,
by the Leibniz rule, it is enough to show Lemma \ref{lemm_bracket_filtration}.

By Theorem \ref{thm_kappa},
the equation (\ref{equation_calc_2}) and 
Proposition \ref{prop_Goldman_bracket} (\cite{KK} Theorem 4.1.2), we have the following.

\begin{prop}
\label{prop_bracket_filtration_2}
\begin{enumerate}
\item The Lie algebra homomorphism $\kappa :\zettaiti{\Q \pi}_\square \to
\ker \epsilon / (\ker \epsilon)^2$
induces 
$\kappa :\zettaiti{\Q \pi}_\square /\zettaiti{(\ker \epsilon_\pi)^3}_\square
\to \ker \epsilon/\ker \varpi $.
\item The Lie algebra homomorphism $\kappa :\zettaiti{\Q \pi}_\square \to
\ker \epsilon / (\ker \epsilon)^2$
induces 
$\kappa :\zettaiti{\Q \pi}_\square /\zettaiti{(\ker \epsilon_\pi)^4}_\square
\to \ker \epsilon/(\ker \epsilon)^2$.
\end{enumerate}•

\end{prop}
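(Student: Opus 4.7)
The plan is to verify the two assertions by direct computation using the identity equation (\ref{equation_calc_2}) and the definition of $\lambda$, which together pin down how $\kappa$ interacts with the augmentation filtration on the group ring side.

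For part (2), the key ingredient is equation (\ref{equation_calc_2}), which says
$\kukakko{(a-1)(b-1)(c-1)(d-1)} = 0$ in $\ker \epsilon / (\ker \epsilon)^2$
for any $a,b,c,d \in \pi_1 (\Sigma)$. Since $(\ker \epsilon_\pi)^4 \subset \Q \pi_1 (\Sigma)$ is the $\Q$-span of such four-fold products and the map $\kukakko{\cdot}: \Q \pi_1(\Sigma) \to \ker \epsilon/(\ker \epsilon)^2$ is $\Q$-linear, we conclude $\kukakko{(\ker \epsilon_\pi)^4} = 0$. Because $\kukakko{x}$ depends only on the underlying unoriented framed knot (and its writhe), it is invariant under conjugation $x \mapsto y x y^{-1}$ and under orientation reversal $x \mapsto x^{-1}$; hence it descends through $\Q \hat{\pi} (\Sigma)$ and through the projection $(\upsilon+\id)$ to define $\kappa$ on $\Q \pi_\square (\Sigma)$ up to sign. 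In particular $\kappa(\zettaiti{(\ker \epsilon_\pi)^4}_\square) = 0$, and this yields the induced map of (2).

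For part (1), the definition of $\lambda$ gives
\begin{equation*}
\kappa(\zettaiti{(a-1)(b-1)(c-1)}_\square) = -\kukakko{(a-1)(b-1)(c-1)} = -\lambda([a] \wedge [b] \wedge [c]) \in \im \lambda
\end{equation*}
for any $a,b,c \in \pi_1 (\Sigma)$, so $\kappa(\zettaiti{(\ker \epsilon_\pi)^3}_\square) \subset \im \lambda$. The quotient map $\varpi$ was constructed precisely so that $\ker \varpi / (\ker \epsilon)^2 = \im \lambda$; composing $\kappa$ with the projection $\ker \epsilon /(\ker \epsilon)^2 \to \ker \epsilon / \ker \varpi$ therefore annihilates $\zettaiti{(\ker \epsilon_\pi)^3}_\square$, yielding the induced map of (1).

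To upgrade these $\Q$-linear factorings to Lie algebra homomorphisms, one needs the source subspaces $\zettaiti{(\ker \epsilon_\pi)^n}_\square$ to be Lie ideals in $\Q \pi_\square (\Sigma)$ and the target subspaces to be Lie ideals in the target; the former is supplied by Proposition \ref{prop_Goldman_bracket} (via $[\zettaiti{(\ker \epsilon_\pi)^n},\zettaiti{(\ker \epsilon_\pi)^m}] \subset \zettaiti{(\ker \epsilon_\pi)^{n+m-2}}$), the latter by Proposition \ref{prop_bracket_filtration} applied to $F^2,F^3 \skein{\Sigma}$, and compatibility is then automatic from Theorem \ref{thm_kappa}. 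No serious obstruction is anticipated: the content reduces to already established algebraic identities, and the only bookkeeping to watch is that the passage from $\Q \pi_1$ through $\Q \hat{\pi}$ to $\Q \pi_\square$ introduces no new relations beyond those guaranteed by the conjugation- and reversal-invariance of $\kukakko{\cdot}$.
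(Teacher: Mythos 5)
Your argument is correct and is essentially the paper's own (the paper gives no written proof, deriving the proposition directly from Theorem \ref{thm_kappa}, equation (\ref{equation_calc_2}) and Proposition \ref{prop_Goldman_bracket} --- precisely the ingredients you use): since $(\ker \epsilon_\pi)^n$ is spanned by $n$-fold products $(a_1-1)\cdots(a_n-1)$, equation (\ref{equation_calc_2}) kills the image of $\zettaiti{(\ker \epsilon_\pi)^4}_\square$, and the definition of $\lambda$ places the image of $\zettaiti{(\ker \epsilon_\pi)^3}_\square$ inside $\im \lambda = \ker \varpi /(\ker \epsilon)^2$. One caveat on your last paragraph: $\zettaiti{(\ker \epsilon_\pi)^3}_\square$ is \emph{not} a Lie ideal of $\zettaiti{\Q \pi}_\square$ (Proposition \ref{prop_Goldman_bracket} only yields $[\zettaiti{\Q\pi}_\square,\zettaiti{(\ker \epsilon_\pi)^3}_\square] \subset \zettaiti{(\ker \epsilon_\pi)^{2}}_\square$), and invoking Proposition \ref{prop_bracket_filtration} for the target would be circular, since that proposition is deduced later from the present one via Lemma \ref{lemm_bracket_filtration}; fortunately the statement only asserts the factoring of $\kappa$ as a $\Q$-linear map through these quotients, which your first two paragraphs fully establish, so this does not affect the validity of the proof.
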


In order to prove Proposition \ref{prop_bracket_filtration},
we need the lemma.

\begin{lemm}
\label{lemm_bracket_filtration}
\begin{enumerate}
\item  We have
$[\ker \epsilon, \ker \epsilon ] \subset \ker \epsilon$.
\item We have 
$[\ker \epsilon, \ker \varpi] \subset \ker \varpi$.
\item We have
$[\ker \varpi,\ker \varpi] \subset (\ker \epsilon)^2$.
\end{enumerate}
\end{lemm}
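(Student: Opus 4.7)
The plan is to prove the three inclusions in order, using (1) to bootstrap (2) via Leibniz on the $F^{\star 3}$-generators of $\ker\varpi$ provided by Lemma~\ref{lemm_filt_douti}, and then handling (3) by a direct application of the Lie-algebra homomorphism property of $\kappa$ together with the Goldman bracket's augmentation shift.

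For (1), I would use the basis description of Lemma~\ref{lemm_generator}: modulo $(\ker\epsilon)^2$, every element of $\ker\epsilon$ lies in $\Q(A+1)\oplus\kappa(\Q\pi_\square)$. Since $A+1\in\Q[A^{\pm 1}]$ is central in $\skein{\Sigma}$, every bracket containing an $(A+1)$-summand vanishes, so we are reduced to brackets $[\kukakko{\alpha_*},\kukakko{\beta_*}]$ of the specific representatives $[T_\alpha]+2-3w(T_\alpha)(A-\gyaku{A})$ arising from knot diagrams. The calculation already done in the proof of Theorem~\ref{thm_kappa} shows this bracket equals $-\sum_i\bigl([T(1,i)]-[T(-1,i)]\bigr)$, and since each pair $T(1,i),T(-1,i)$ has the same number of components we get $\epsilon\bigl([T(1,i)]-[T(-1,i)]\bigr)=0$, so the bracket is in $\ker\epsilon$. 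Brackets involving any $(\ker\epsilon)^2$-part are absorbed by \cite{TsujiCSAI} Lemma~3.11.

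For (2), I would invoke Lemma~\ref{lemm_filt_douti} to identify $\ker\varpi=F^3=F^{\star 3}$, and then check $[x,y]\in F^3$ on each type of $F^{\star 3}$-generator $y$. The $(\ker\epsilon)^2$-generators are handled by Lemma~3.11, and the $(A+1)F^{\star 1}$-generators by centrality of $A+1$ together with (1). The interesting generators are the products $y=\iota_1(\nu)\,\iota_2(c_1)\cdots\iota_N(c_1)\,\iota_{N+1}(\eta)$ with factors in disjoint embedded pieces of $\Sigma$. Expand $[x,y]$ by the Leibniz rule; in each summand, write $\iota_j(c_1)=(\iota_j(c_1)+2)-2$ so that each differentiated factor is either $\iota_1(\nu)\in F^2$, $(\iota_j(c_1)+2)\in F^2$, or $\iota_{N+1}(\eta)\in F^3$. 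By (1), $[x,z_k]\in F^2$ in each case; in the first two cases the remaining prefix$\,\cdot\,$suffix still contains the $F^3$-factor $\iota_{N+1}(\eta)$, and in the last case the prefix still contains the $F^2$-factor $\iota_1(\nu)$. Combining via the product property (Proposition~\ref{prop_product_filtration}), every summand lies in $F^{k}$ with $k\geq 4\geq 3$, so $[x,y]\in F^3=\ker\varpi$.

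For (3), the cleanest argument is via the Lie-algebra homomorphism $\kappa$. Any $x,y\in\ker\varpi$ can be written modulo $(\ker\epsilon)^2$ as $\kappa(\eta_x)$ and $\kappa(\eta_y)$ with $\eta_x,\eta_y\in\zettaiti{(\ker\epsilon_\pi)^3}_\square$, since $\ker\varpi/(\ker\epsilon)^2=\im\lambda=\kappa(\zettaiti{(\ker\epsilon_\pi)^3}_\square)$. By Theorem~\ref{thm_kappa}, $[\kappa(\eta_x),\kappa(\eta_y)]\equiv\kappa([\eta_x,\eta_y]_{\mathrm{Goldman}})\pmod{(\ker\epsilon)^2}$, and by Proposition~\ref{prop_Goldman_bracket} the Goldman bracket lands in $\zettaiti{(\ker\epsilon_\pi)^{3+3-2}}_\square=\zettaiti{(\ker\epsilon_\pi)^{4}}_\square$. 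By equation~(\ref{equation_calc_2}) (i.e.\ Proposition~\ref{prop_bracket_filtration_2}(2)), $\kappa$ kills $\zettaiti{(\ker\epsilon_\pi)^4}_\square$ modulo $(\ker\epsilon)^2$, so $[\kappa(\eta_x),\kappa(\eta_y)]\in(\ker\epsilon)^2$; the mixed terms with $(\ker\epsilon)^2$-parts are again absorbed by Lemma~3.11.

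The main obstacle I anticipate is the bookkeeping in (2): one must check carefully that after shifting $\iota_j(c_1)$ by the central constant $-2$ and applying Leibniz, the filtration count via Proposition~\ref{prop_product_filtration} really places every summand in $F^3$, and that one is not forced to improve the $F^2$-estimate from (1) to an $F^3$-estimate at the $z_k=\iota_{N+1}(\eta)$ term (which would be circular). The key numerical observation that makes it work is that the prefix in the $\eta$-summand automatically contains the $F^2$-factor $\iota_1(\nu)$, so the product $F^2\cdot F^2$ already lands in $F^4\subset F^3$.
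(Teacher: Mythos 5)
Your parts (1) and (3) are sound: (3) is exactly the paper's argument (push the bracket through the Lie algebra homomorphism $\kappa$ of Theorem~\ref{thm_kappa}, apply the Goldman filtration property $[\zettaiti{(\ker \epsilon_\pi)^3}_\square,\zettaiti{(\ker \epsilon_\pi)^3}_\square]\subset\zettaiti{(\ker \epsilon_\pi)^4}_\square$ from Proposition~\ref{prop_Goldman_bracket}, and kill the result modulo $(\ker\epsilon)^2$ by equation~(\ref{equation_calc_2})), while for (1) you give a direct verification of what the paper simply cites from \cite{TsujiCSAI} Lemma~3.11.

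Part (2), however, has a genuine gap, and it is precisely the circularity you flagged at the end. Modulo $(\ker\epsilon)^2$, the space $\ker\varpi$ is spanned by $\im\lambda$, i.e.\ by the classes of the elements $\iota_*(\eta)$ for embeddings $\iota:\Sigma_{0,4}\times I\to\Sigma\times I$ (Lemma~\ref{lemm_filt_induction_1}); this is visible in the proof of Lemma~\ref{lemm_filt_induction_1}(2), which reduces $F^{\star 3}$ to generators of the form $\iota_*((\oplus^N c_1)\oplus\eta)$ carrying \emph{no} $\nu$-factor. Since $\epsilon(c_1)=-2\neq 0$, the product of the remaining $c_1$-factors is a unit modulo $\ker\epsilon$, so after your shift $c_1=(c_1+2)-2$ the Leibniz expansion of $[x,\,\iota_*((\oplus^N c_1)\oplus\eta)]$ contains the term $(-2)^N[x,\iota_*(\eta)]$ with nothing left over to multiply against. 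For that term your input (1) only gives $[x,\iota_*(\eta)]\in F^2$, and upgrading it to $F^3$ is exactly the statement (2) you are trying to prove; the "key numerical observation" that a $\nu$-factor survives in the prefix is false for the generators that actually span $\ker\varpi$ modulo $(\ker\epsilon)^2$. (If instead you insist on generators that do carry a $\nu$, they lie in $F^2F^3+(\ker\epsilon)^2\subset(\ker\epsilon)^2$ and fail to span $\ker\varpi$.) The paper avoids this entirely by treating (2) with the same mechanism as (3): writing $\ker\epsilon$ modulo $(\ker\epsilon)^2$ as $\Q(A+1)+\kappa(\zettaiti{(\ker\epsilon_\pi)^2}_\square)$ with $A+1$ central, it computes $[\ker\epsilon,\im\lambda]=\kappa([\zettaiti{(\ker\epsilon_\pi)^2}_\square,\zettaiti{(\ker\epsilon_\pi)^3}_\square])\subset\kappa(\zettaiti{(\ker\epsilon_\pi)^3}_\square)=\im\lambda$ modulo $(\ker\epsilon)^2$, which is $\ker\varpi$. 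Replacing your Leibniz argument for (2) by this one-line application of Theorem~\ref{thm_kappa} and Proposition~\ref{prop_Goldman_bracket} repairs the proof.
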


\begin{proof}
The claim (1) is \cite{TsujiCSAI} Lemma 3.11.
By \cite{KK} Theorem 4.1.2, we have 
\begin{align*}
&[\ker \epsilon, \im \lambda ] =\kappa(
[\zettaiti{(\epsilon_\pi)^2}_\square,\zettaiti{(\epsilon_\pi)^3}_\square]
\subset \kappa(\zettaiti{(\epsilon_\pi)^3}_\square) =\im \lambda \mod (\ker \epsilon)^2, \\
&[\im \lambda, \im \lambda ] =\kappa(
[\zettaiti{(\epsilon_\pi)^3}_\square,\zettaiti{(\epsilon_\pi)^3}_\square]
\subset \kappa(\zettaiti{(\epsilon_\pi)^4}_\square) =0 \mod (\ker \epsilon)^2. \\
\end{align*}
This proves the lemma.
\end{proof}

\begin{df}
The $\Q$-linear map $\rho :\Q \oplus H_1 \cdot H_1 \to
F^2 \skein{\Sigma} /F^3 \skein{\Sigma}$ is defined by
$1 \in \Q \mapsto (A+1)$ and $[a] \cdot [b] \mapsto \kukakko{ab}-\kukakko{a}
-\kukakko{b}$ for $a$ and $b \in \pi_1 (\Sigma)$
where $H_1 \cdot H_1$ is the symmetric tensor of $H_1$.
\end{df}
 
By Theorem \ref{thm_basis}, $\rho$ is bijection.

By Proposition \ref{prop_bracket_filtration_2} and 
Proposition \ref{prop_Goldman_Lie_filtration}
we have following.

\begin{cor}
\label{cor_bracket_filtration}
\begin{enumerate}
\item We have 
\begin{equation*}
[\rho(\alpha_1 \cdot \alpha_2), \rho(\beta_1 \cdot \beta_2)]
=\sum_{(i_1,i_2) =(1,2),(2,1),(j_1,j_2)=(1,2),(2,1)}
 -2 \mu(\alpha_{i_1},\beta_{j_1})\rho(\alpha_{i_2}\cdot \beta_{j_2})
\mod F^3 \skein{\Sigma}.
\end{equation*}
\item We have
\begin{align*}
&[\rho(\alpha_1 \cdot \alpha_2), \lambda(\beta_1 \wedge \beta_2 \wedge \beta_3)] \\
&=\sum_{(i_1,i_2) =(1,2),(2,1),(j_1,j_2,j_3)=(1,2,3),(2,3,1),(3,1,2)}
-2 \mu(\alpha_{i_1},\beta_{j_1}) \lambda(\alpha_{i_2} \wedge \beta_{j_2} \wedge \beta_{j_3})
\mod F^4 \skein{\Sigma}.
\end{align*}
\end{enumerate}
\end{cor}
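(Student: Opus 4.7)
The plan is to transport both bracket identities from the skein algebra to the Goldman Lie algebra via the Lie algebra homomorphism $\kappa$. By the definition of $\rho$ and the identity $\kappa((x)_\square) = -\kukakko{x}$, we have
\begin{equation*}
\rho([\alpha_1]\cdot[\alpha_2]) = -\kappa\bigl((\alpha_1\alpha_2)_\square - (\alpha_1)_\square - (\alpha_2)_\square\bigr) \bmod F^3\skein{\Sigma},
\end{equation*}
and an analogous expression identifies $\lambda([\beta_1]\wedge[\beta_2]\wedge[\beta_3])$ as $-\kappa$ applied to the alternating sum $\sum_{\epsilon_i \in \{0,1\}}(-1)^{\epsilon_1+\epsilon_2+\epsilon_3}(\beta_1^{\epsilon_1}\beta_2^{\epsilon_2}\beta_3^{\epsilon_3})_\square$, which lies in $\zettaiti{(\ker\epsilon_\pi)^3}_\square$. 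By Proposition \ref{prop_bracket_filtration_2}, $\kappa$ descends to Lie algebra homomorphisms on the relevant quotients, so each bracket on the left-hand side of the corollary equals the image under $\kappa$ of the corresponding bracket computed in the associated graded of the unoriented Goldman Lie algebra.

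First I would nail down the identifications between the graded pieces via the isomorphisms $C(n)$ of the cited proposition. The $\upsilon$-invariant part of $\zettaiti{(\ker\epsilon_\pi)^2}_\square/\zettaiti{(\ker\epsilon_\pi)^3}_\square$ is identified with the symmetric tensors $H_1 \cdot H_1$, matching the domain of $\rho$, and its $\zettaiti{(\ker\epsilon_\pi)^3}_\square/\zettaiti{(\ker\epsilon_\pi)^4}_\square$ counterpart restricts to $\wedge^3 H_1$ through the antisymmetrization implicit in $\lambda$ (as already encoded by the equations $\kukakko{(a-1)(b-1)(c-1)} = -\kukakko{(b-1)(a-1)(c-1)}$ from Lemma \ref{lemm_calc_1}).

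Next I would apply Proposition \ref{prop_Goldman_Lie_filtration} directly. For part (1), specializing $n=m=2$ in the formula for $b'_{\pi,2,2}$ gives four terms $\mu(\alpha_i,\beta_j)\,c(\alpha_{i'}\otimes\beta_{j'})$ indexed by the complementary pairs; after projecting to the $\upsilon$-invariant quotient and applying $\kappa$, this becomes the sum over $(i_1,i_2),(j_1,j_2)\in\{(1,2),(2,1)\}$ displayed in the corollary. For part (2), specializing $n=2,\,m=3$ produces six terms of the form $\mu(\alpha_i,\beta_j)\,c(\alpha_{i'}\otimes\beta_{j_2}\otimes\beta_{j_3})$, and under the antisymmetrization packaging these into $\lambda$, exactly the six cyclic permutations $(j_1,j_2,j_3)\in\{(1,2,3),(2,3,1),(3,1,2)\}$ against the two choices of $(i_1,i_2)$ survive. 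The overall constant $-2$ arises from combining the sign in $\kappa((x)_\square) = -\kukakko{x}$ with the doubling coming from the symmetrization $(x)_\square = \zettaiti{x} + \zettaiti{x^{-1}}$ (equivalently, from $\kukakko{(a-1)(a-1)} = 2\kukakko{a}$ in Lemma \ref{lemm_calc_1}).

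The main obstacle will be the bookkeeping: one must check that the $\upsilon$-equivariance of the Goldman bracket cooperates cleanly with the $\kappa$-target, that the various orientation-reversed terms $(\beta_j^{-1})$ introduced by using $(\beta)_\square$ rather than $\zettaiti{\beta}$ cancel or double correctly, and that the higher-order corrections (elements of $F^3$ for part (1) and $F^4$ for part (2)) coming from the relation between products $ab$ and the filtered pieces $(a-1)(b-1)$ are absorbed into the stated error term. Once these identifications are set up, the statement is a direct substitution into the formulas of Proposition \ref{prop_Goldman_Lie_filtration}.
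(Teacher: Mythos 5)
Your proposal is correct and follows essentially the same route as the paper: the paper's entire justification for this corollary is the citation of Proposition \ref{prop_bracket_filtration_2} (that $\kappa$ descends to the relevant graded quotients) together with Proposition \ref{prop_Goldman_Lie_filtration} (the explicit $\mu$-formula for the graded Goldman bracket), which is exactly the transport-via-$\kappa$ argument you outline. Your accounting for the factor $-2$ (one sign from $\kappa((x)_\square)=-\kukakko{x}$ on the output, one doubling from $(x)_\square=\zettaiti{x}+\upsilon(\zettaiti{x})$) and for the antisymmetrization into $\lambda$ via Lemma \ref{lemm_calc_1} supplies precisely the bookkeeping the paper leaves implicit.
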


\begin{cor}
\label{cor_bch_jouken}
Let $V_1$ and $V_2 \subset V_1$ be $\Q$-linear subspaces of $H$
satisfying $\mu (v,v') =0$ for any $v \in V_2 $ and $v' \in V_1$.
We denote
\begin{equation*}
S = \shuugou{x  \in \widehat{\skein{\Sigma}}|
\rho^{-1} (x \mod F^3 \widehat{\skein{\Sigma}}) \in
V_1 \cdot V_2}.
\end{equation*}
Then we have
\begin{equation*}
\sigma (s_1) \circ \sigma(s_2) \circ \cdots \circ \sigma (s_{2i-1}) (F^{i-1}
\skein{\Sigma}) \subset F^i \skein{\Sigma}
\end{equation*}
for any $i \in \Z_{\geq 1}$ and $s_1, \cdots , s_{2i-1} \in S$.

\end{cor}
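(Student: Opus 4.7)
The plan is to recast the corollary as a vanishing condition on the associated graded pieces $\mathrm{gr}^n \defeq F^n \skein{\Sigma}/F^{n+1}\skein{\Sigma}$ and prove it by induction on $n$.  Since $\sigma(s) = [s,\cdot\,]$ is a derivation of the associative algebra $\skein{\Sigma}$, and by Proposition~\ref{prop_bracket_filtration} preserves each step $F^n\skein{\Sigma}$ when $s \in F^2\skein{\Sigma}$, it induces an operator $\overline{\sigma}(s)$ on each $\mathrm{gr}^n$; moreover, because $[F^3\skein{\Sigma}, F^n\skein{\Sigma}] \subset F^{n+1}\skein{\Sigma}$, this induced operator depends only on the class $s \bmod F^3$, i.e., only on $\rho^{-1}(s \bmod F^3) \in V_1 \cdot V_2$.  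The corollary thus reduces to the following Claim: for every $n \geq 0$ and every $s_1, \ldots, s_{2n+1} \in S$, the composition $\overline{\sigma}(s_1) \circ \cdots \circ \overline{\sigma}(s_{2n+1})$ is the zero endomorphism of $\mathrm{gr}^n$.  Applying the Claim with $n = i - 1$ and lifting back to $F^{i-1}\skein{\Sigma}$ yields the stated conclusion, as each $\sigma(s_k)$ preserves the ambient step $F^i\skein{\Sigma}$.

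The cases $n = 0, 1$ are immediate: $\mathrm{gr}^0 \cong \Q$ is annihilated by every $\overline{\sigma}(s)$ (scalars are central), and $\mathrm{gr}^1 = 0$.  For $n = 2$, I would construct a strictly decreasing filtration of $\mathrm{gr}^2 \cong \Q(A{+}1) \oplus H_1 \cdot H_1$ by
\begin{align*}
G^0 &= \Q(A{+}1) \oplus H_1 \cdot H_1, &G^1 &= V_1 \cdot H_1, \\
G^2 &= V_2 \cdot H_1 + V_1 \cdot V_1,  &G^3 &= V_1 \cdot V_2, \\
G^4 &= V_2 \cdot V_2,                   &G^5 &= 0,
\end{align*}
and verify $\overline{\sigma}(s)(G^k) \subset G^{k+1}$ for $s = \rho(v_1 v_2)$ with $v_i \in V_i$ by enumerating the four terms of Corollary~\ref{cor_bracket_filtration}(1), noting that the hypothesis $\mu(V_2, V_1) = 0$ (which also forces $\mu(V_2, V_2) = 0$ since $V_2 \subset V_1$) kills precisely the terms whose pairing couples a $V_2$-vector with a $V_1$-vector.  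For $n = 3$, I would analogously build a seven-step filtration $W^0 \supset W^1 \supset \cdots \supset W^7 = 0$ of $\mathrm{gr}^3 \cong \wedge^3 H_1$, starting with $W^1 = V_1 \wedge H_1 \wedge H_1$, progressing through intermediate subspaces with progressively more factors constrained in $V_1$ or $V_2$, and ending with $W^6 = V_2 \wedge V_2 \wedge V_2$ and $W^7 = 0$; the inclusions $\overline{\sigma}(s)(W^k) \subset W^{k+1}$ are then checked using the six-term formula of Corollary~\ref{cor_bracket_filtration}(2) in the same manner.

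For $n \geq 4$, I would exploit the product structure of the filtration: by its recursive definition, every element of $F^{2m}\skein{\Sigma}$ is a sum of $m$-fold products of elements of $F^2\skein{\Sigma}$, and every element of $F^{2m+1}\skein{\Sigma}$ (with $m \geq 1$) is a sum of products of one element of $F^3\skein{\Sigma}$ with $m - 1$ elements of $F^2\skein{\Sigma}$.  Since $\sigma(s)$ is a derivation, expanding $\sigma(s_1) \circ \cdots \circ \sigma(s_{2n+1})$ via the iterated Leibniz rule produces a sum over partitions $A_1 \sqcup \cdots \sqcup A_k$ of $\{1, \ldots, 2n+1\}$ distributing operators among the $k$ factors.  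When $n = 2m$, the inequality $\sum |A_j| = 4m+1 > 4m$ forces some $F^2$-factor to receive at least $5$ operators, and by the $n = 2$ case it then lands in $F^3$, pushing the whole product into $F^{n+1}\skein{\Sigma}$.  When $n = 2m+1$ with $k = m$ factors (one in $F^3$, the rest in $F^2$), the inequality $4m+3 > 6 + 4(m-1)$ forces either some $F^2$-factor to receive $\geq 5$ operators or the $F^3$-factor to receive $\geq 7$; by the $n = 2$ or $n = 3$ case respectively, that factor gains one degree of filtration, and the whole product again lies in $F^{n+1}\skein{\Sigma}$.  The main obstacle will be the careful verification of the filtration-step inclusions in the base cases $n = 2$ and especially $n = 3$: although elementary, they require enumerating all the surviving terms of the bracket formulas across each sub-space of the filtrations.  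Once these are in place, the Leibniz-plus-pigeonhole reduction for $n \geq 4$ is a purely combinatorial bookkeeping.
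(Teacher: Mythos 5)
Your proposal is correct and takes essentially the same route as the paper: the paper likewise reduces to the graded pieces and introduces a secondary decreasing filtration $V(n)_j$ on each $F^{n}\skein{\Sigma}/F^{n+1}\skein{\Sigma}$, indexed by how many slots of the generating products $\rho(V_{j_1}\cdot V_{j_2})\cdots\lambda(V_{j_a}\wedge V_{j_b}\wedge V_{j_c})$ are constrained to lie in $V_1$ or $V_2$, and then shows via Corollary \ref{cor_bracket_filtration} that each $\sigma(s)$ raises this index by one until the filtration terminates after $2n+1$ steps. Your explicit treatment of $n=2,3$ followed by the Leibniz-plus-pigeonhole reduction for $n\geq 4$ is only a coarser bookkeeping of the same total-index count that the paper tracks uniformly across all factors of the product.
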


\begin{proof}
We denote $H_1$ by $V_0$.
Let $V(2i)_j$ be the submodule of $F^{2i} \skein{\Sigma}/F^{2i+1} \skein{\Sigma}$
defined by
\begin{equation*}
V(2i)_j \defeq \sum_{i'=1}^{i}\sum_{j_1+\cdots+j_{2i'} =j} h^{i-i'}\rho (V_{j_1}\cdot
V_{j_2}) \cdots \rho (V_{j_{i'-1}} \cdot V_{j_{i'}}).
\end{equation*}
Let $V(2i+3)_j$ be the submodule of $F^{2i+3} \skein{\Sigma}/F^{2i+4} \skein{\Sigma}$
defined by
\begin{equation*}
V(2i+3)_j \defeq \sum_{i'=1}^{i}\sum_{j_1+\cdots+j_{2i'} =j} h^{i-i'}\rho (V_{j_1}\cdot
V_{j_2}) \cdots \rho (V_{j_{i'-1}} \cdot V_{j_{i'}}) \lambda (V_{j_{i'+1}} \wedge
V_{j_{i'+2}} \wedge V_{j_{i'+3}}).
\end{equation*}
For any $s \in S$, $i$ and $j \in \shuugou{0,1, 2, \cdots, 2i-1}$, 
we have
\begin{align*}
&\sigma (s) (V(i)_j) \subset V(i)_{j+1}, \\
&\sigma (s)(V(i)_{2i}) =0.
\end{align*}
This proves the corollary.
\end{proof}

\subsection{Skein module and $\rho$}
The aim of this subsection is to prove the following.

\begin{prop}
\label{prop_bch_aut_jouken}
Let $V_1$ and $V_2 \subset V_1$ be $\Q$-linear subspaces of $H$
satisfying $\mu (v,v') =0$ for any $v \in V_2 $ and $v' \in V_1$.
We denote
\begin{equation*}
S = \shuugou{x  \in \widehat{\skein{\Sigma}}|
\rho^{-1} (x \mod F^3 \widehat{\skein{\Sigma}}) \in
V_1 \cdot V_2}.
\end{equation*}
Then, for any $i \in \Z_{\geq 1}$ and any finite subset 
$J \subset
\partial \Sigma$, there exists $j_i \in \Z_{\geq 1}$
such that
\begin{equation*}
\sigma (s_1) \circ \sigma(s_2) \circ \cdots \circ \sigma (s_{j_i}) ((\ker \epsilon)^{i-1}
\skein{\Sigma,J}) \subset (\ker \epsilon)^i \skein{\Sigma,J}
\end{equation*}
for any $s_1, \cdots , s_{j_i} \in S$.
\end{prop}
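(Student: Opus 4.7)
The plan is to lift the argument for the algebra (Corollary~\ref{cor_bch_jouken}) to the module $\skein{\Sigma,J}$. First I would introduce a refined module filtration $\{F^n \skein{\Sigma,J}\}_{n \geq 0}$ parallel to Definition~4.1, e.g. by letting $F^n \skein{\Sigma,J}$ be the $\Q[A^{\pm 1}]$-submodule generated by $F^n \skein{\Sigma}\cdot \skein{\Sigma,J}$ (or, equivalently, mimic the ``$\star$'' definition using embeddings of $\Sigma_{0,2}$ and $\Sigma_{0,4}$ into the complement of a system of tangle arcs through $J$). Using $F^{2n}\skein{\Sigma} = (\ker\epsilon)^n$, one obtains $F^{2n}\skein{\Sigma,J} = (\ker\epsilon)^n \skein{\Sigma,J}$, so it suffices to exhibit $j_i$ for which
\begin{equation*}
\sigma(s_1) \circ \cdots \circ \sigma(s_{j_i})(F^{2i-2}\skein{\Sigma,J}) \subset F^{2i}\skein{\Sigma,J}
\end{equation*}
whenever $s_1,\dots,s_{j_i} \in S$.

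Next I would transfer the structural results of Section~4 to the module. The inclusion $F^n \skein{\Sigma}\cdot F^m \skein{\Sigma,J} \subset F^{n+m}\skein{\Sigma,J}$ follows directly from Proposition~\ref{prop_product_filtration} and the definition. The bracket inclusion $\sigma(F^n\skein{\Sigma})(F^m\skein{\Sigma,J}) \subset F^{n+m-2}\skein{\Sigma,J}$ then follows as in Proposition~\ref{prop_bracket_filtration}: both $xz$ and $zx$ lie in $F^{n+m}\skein{\Sigma,J}$, while the factor $-A+A^{-1}$ absorbs two units of filtration since $(-A+A^{-1}) \in F^2 \skein{\Sigma}$. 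In particular $\sigma(S)$ preserves each $F^k \skein{\Sigma,J}$.

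The core of the proof is then to mimic the secondary-grading argument inside Corollary~\ref{cor_bch_jouken}. Setting $V_0 := H_1$, I would introduce on $F^n \skein{\Sigma,J}/F^{n+1}\skein{\Sigma,J}$ submodules $V(n)_j$ indexed by a secondary grading $j$ that counts the total number of $V_2$-factors appearing in a monomial built from $\rho$- and $\lambda$-classes acting on a set of representative tangles with endpoints in $J$. Exactly as in Corollary~\ref{cor_bracket_filtration}, the commutator formula for $\rho(\alpha_1\cdot\alpha_2)$ and $\lambda(\beta_1\wedge\beta_2\wedge\beta_3)$ involves only the intersection pairings $\mu(\alpha_{i},\beta_{j})$; the vanishing $\mu(V_2,V_1)=0$ then forces $\sigma(s)$ with $s \in S$ to send $V(n)_j$ into $V(n)_{j+1}$ and to vanish once $j$ exceeds a finite bound $N_n$ depending only on $n$, the topology of $\Sigma$, and $\#J$. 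Setting $j_i := N_{2i-2} + N_{2i-1} + 2$ therefore gives the desired inclusion: the first $N_{2i-2}+1$ applications push us from $F^{2i-2}$ into $F^{2i-1}$, and the remaining $N_{2i-1}+1$ applications push us into $F^{2i} = (\ker\epsilon)^i$.

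The main obstacle is Step~3: the module $\skein{\Sigma,J}$ does not come equipped with an explicit basis like Theorem~\ref{thm_basis}, so identifying $V(n)_j$ and verifying that $\sigma(s)$ raises the secondary grading requires a Magnus-expansion–type analysis of the associated graded of $\skein{\Sigma,J}$ compatible with the algebra structure. This parallels Proposition~\ref{prop_Goldman_Lie_filtration} for the fundamental groupoid $\pi_1(\Sigma,J)$ and should transfer through a module version of the homomorphism $\kappa$; the combinatorial bookkeeping of the $V_2$-grading and its increase under $\sigma(s)$ then proceeds just as in the algebra case.
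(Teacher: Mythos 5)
Your plan has the right overall shape (Leibniz rule plus a secondary grading killed by $\mu(V_2,V_1)=0$), but the step you yourself flag as ``the main obstacle'' is precisely the mathematical content of the proposition, and your proposal neither carries it out nor points at a route that would actually close it. Building a refined filtration $F^n\skein{\Sigma,J}$ together with submodules $V(n)_j$ of its associated graded would require an explicit description (essentially a basis) of $F^n\skein{\Sigma,J}/F^{n+1}\skein{\Sigma,J}$, which is not available; moreover your justification of the module bracket inclusion $\sigma(F^n\skein{\Sigma})(F^m\skein{\Sigma,J})\subset F^{n+m-2}\skein{\Sigma,J}$ by saying that the factor $-A+A^{-1}$ ``absorbs two units of filtration'' is not an argument --- knowing $xz-zx\in F^{n+m}$ does not let you divide by $-A+A^{-1}$ and stay two steps lower; the algebra version (Proposition \ref{prop_bracket_filtration}) is proved via the Goldman Lie algebra, not by such a division.

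The paper avoids all of this by a much more economical reduction. Using the Leibniz rule and Corollary \ref{cor_bch_jouken} for the algebra factor, the whole proposition is reduced to a single base case (Lemma \ref{lemm_bch_aut_jouken}): nine applications of $\sigma(s)$, $s\in S$, send $\skein{\Sigma,J}$ into $\ker\epsilon\,\skein{\Sigma,J}$ for $J=\{*_1,*_2\}$. No refined filtration on the module is introduced at all --- only the coarse filtration $(\ker\epsilon)^n\skein{\Sigma,J}$ appears. The base case is then handled by mapping the fundamental groupoid onto the quotient: the assignment $r\mapsto\kukakko{r}$ gives a surjection $\Q\pi(\Sigma,*_1,*_2)\to\skein{\Sigma,J}/\ker\epsilon\,\skein{\Sigma,J}$ (well defined by the skein relation, which becomes $2\kukakko{yr}=\kukakko{yxr}+\kukakko{yx^{-1}r}$ modulo $\ker\epsilon\,\skein{\Sigma,J}$), one checks $\sigma(\kukakko{x})(\kukakko{r})=-\kukakko{\sigma_\pi(\zettaiti{x}_\square)(r)}$, and the $V_2$-grading argument is then run entirely inside $\Q\pi_1(\Sigma,*,*')$, where the Magnus-expansion description of the associated graded (Proposition \ref{prop_Goldman_Lie_filtration}) is actually proved; the count $1+3+5=9$ comes from pushing $\Q\pi$ into $\ker\epsilon_\pi$, then into $(\ker\epsilon_\pi)^2$, then into $(\ker\epsilon_\pi)^3$, at which point Lemma \ref{lemm_module_calculation} shows the image in $\skein{\Sigma,J}/\ker\epsilon\,\skein{\Sigma,J}$ vanishes. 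You gesture at ``a module version of $\kappa$'' and at the groupoid, so you see where the argument must live, but as written the proposal defers exactly the step that needs proving and proposes scaffolding (a refined module filtration and its basis) that the paper shows is unnecessary.
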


By Leibniz rule and Corollary \ref{cor_bch_jouken}, 
the following lemma induces the above proposition.

\begin{lemm}
\label{lemm_bch_aut_jouken}
We have
\begin{equation*}
\sigma (s_1) \circ \sigma (s_2) \circ \cdots \circ \sigma (s_9) (\skein{\Sigma,J})
\subset \ker \epsilon \skein{\Sigma,J}
\end{equation*}
for any  $J = \shuugou{*_1 , *_2} \subset \partial \Sigma$
and $s_1, \cdots , s_9 \in S$.

\end{lemm}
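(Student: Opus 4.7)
My plan is to reduce to the action of $\sigma(s)$ on a single generating arc in $\skein{\Sigma,J}$ and then adapt the filtration bookkeeping from the proof of Corollary \ref{cor_bch_jouken} to the groupoid/module setting.

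First I would exploit the Leibniz rule
\[
\sigma(s)(xz) = \sigma(s)(x)\,z + x\,\sigma(s)(z) \qquad (x \in \skein{\Sigma},\ z \in \skein{\Sigma,J})
\]
together with the observation that $\skein{\Sigma,J}$ is generated as a $\skein{\Sigma}$-module by tangles consisting of a simple arc $r$ joining $*_1$ to $*_2$ together with a disjoint union of loops. Since Corollary \ref{cor_bch_jouken} already controls the loop components inside $\skein{\Sigma}$, the problem reduces to showing that nine applications of $\sigma(s_i)$ to the class of a single arc $[r]$ already lie in $\ker\epsilon \cdot \skein{\Sigma,J}$.

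Next I would install an auxiliary filtration $\{G^n \skein{\Sigma,J}\}_{n\geq 0}$ (with $G^0 = \skein{\Sigma,J}$, $G^1 = G^2 = \ker\epsilon \cdot \skein{\Sigma,J}$, and $G^{n+2} = \ker\epsilon \cdot G^n$) together with subspaces $W(i)_j$ indexed by a $V_0 / V_1 / V_2$ grading that mirrors the $V(i)_j$ used in the proof of Corollary \ref{cor_bch_jouken}, but now carrying also the groupoid information of the arc. The point is that the action of $\sigma(s)$ for $s \in S$ is governed, through the Magnus expansion and Proposition \ref{prop_Goldman_bracket}, by the intersection form $\mu$ between the leading tensor of $s$ (which lies in $V_1 \cdot V_2$) and the current tensor data of the arc. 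The orthogonality hypothesis $\mu(V_2,V_1)=0$ then implies that each successive application of $\sigma(s_i)$ either shifts the $V_1/V_2$ grading strictly upward or else contributes a new factor of $A+1$, exactly as in Corollary \ref{cor_bch_jouken}.

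Finally I would verify that the total number of grading steps available to the pair (arc, accumulated bracket data) before the ``dead zone'' $W(i)_{2i}$ is reached is bounded by nine. Combining this with the Leibniz reduction in the first step and the control on loops in the second step yields $\sigma(s_1)\circ\cdots\circ\sigma(s_9)(\skein{\Sigma,J}) \subset \ker\epsilon \cdot \skein{\Sigma,J}$.

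The main obstacle is the combinatorial counting in the final step: one must simultaneously track the symmetric-tensor grading coming from $\rho:\Q\oplus H_1\cdot H_1 \to F^2\skein{\Sigma}/F^3\skein{\Sigma}$ and the tensor grading on the arc coming from $\Q\pi_1(\Sigma,*_1,*_2)$, and argue that the interaction of these two gradings under $\sigma(s)$ for $s\in S$ exhausts itself within nine steps. Compared to the purely algebra case of Corollary \ref{cor_bch_jouken}, the arc contributes additional tensor slots on which $\sigma(s)$ can act nontrivially without immediately producing an $A+1$ factor, and it is exactly this extra contribution that inflates the count from $1$ (in the algebra case) to $9$ here.
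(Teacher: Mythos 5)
Your plan correctly identifies the tools (the Leibniz rule, the grading by $V_0 = H_1 \supset V_1 \supset V_2$, and the orthogonality $\mu(V_2,V_1)=0$ forcing $\sigma(s)$ to shift the grading), but it omits the one structural fact that makes the argument terminate, and as a result the count of nine is asserted rather than justified. The paper works entirely in the single quotient $\skein{\Sigma,J}/\ker\epsilon\,\skein{\Sigma,J}$: it observes that this quotient is spanned by classes $\kukakko{r}$, $r \in \pi(\Sigma,*_1,*_2)$ (well-defined by equation (\ref{equation_kouten_sa})), that the skein relation gives $2\kukakko{yr}=\kukakko{yxr}+\kukakko{yx^{-1}r}$, and hence (Lemma \ref{lemm_module_calculation}) that $\kukakko{(x-1)(y-1)r}=-\kukakko{(y-1)(x-1)r}$ and, crucially, $\kukakko{(x-1)(y-1)(z-1)r}=0$. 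This degree-three vanishing is the termination mechanism: it means $(\ker\epsilon_\pi)^3\,\Q\pi(\Sigma,*_1,*_2)$ maps to zero in $\skein{\Sigma,J}/\ker\epsilon\,\skein{\Sigma,J}$, so it suffices to push an arbitrary arc class down to $(\ker\epsilon_\pi)^3$. Without it the $\ker\epsilon_\pi$-adic filtration of the groupoid algebra is infinite and no finite number of applications of the $\sigma(s_i)$ would suffice; your auxiliary filtration $G^n$ and the subspaces $W(i)_j$ supply no reason for the process to stop at any particular stage.

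Once that vanishing is in place, the number nine is accounted for as $1+3+5$, via the compatibility $\sigma(\kukakko{x})(\kukakko{r})=-\kukakko{\sigma_\pi(\zettaiti{x}_\square)(r)}$ and Proposition \ref{prop_Goldman_bracket}: one application sends $\Q\pi(\Sigma,*_1,*_2)$ into $\ker\epsilon_\pi\,\Q\pi(\Sigma,*_1,*_2)$ because $s$ has symbol in tensor degree $2$; three applications send $\ker\epsilon_\pi$ into $(\ker\epsilon_\pi)^2$ because the leading term lives in $V_0$ and each $\sigma(s)$ with symbol in $c(V_1\otimes V_2)$ shifts $V_i$ to $V_{i+1}$ and annihilates $V_2$; and five applications send $(\ker\epsilon_\pi)^2$ into $(\ker\epsilon_\pi)^3$ because the leading term lives in $\sum_{j_1+j_2=j}c(V_{j_1}\otimes V_{j_2})$ with $j$ ranging over $0,\dots,4$. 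Your closing remark that the count is inflated ``from $1$ in the algebra case to $9$ here'' also misreads Corollary \ref{cor_bch_jouken}, where the count is $2i-1$, not $1$; in both settings the bound is read off from the length of the $V$-grading on the leading symbol, and in the module setting one must additionally know that tensor degree $3$ already dies in the quotient. To repair your proof you need to supply the computation $\kukakko{(x-1)(y-1)(z-1)r}=0$ and then carry out the $1+3+5$ bookkeeping explicitly.
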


We consider $\skein{\Sigma,J}/ \ker \epsilon \skein{\Sigma,J}$.
For $r \in \pi (\Sigma,*_1,*_2)
\defeq [(I, \shuugou{0}, \shuugou{1}), (\Sigma,\shuugou{*_1},
\shuugou{*_2})]$, Let $\kukakko{r}$
be the element $\skein{\Sigma,J}/ \ker \epsilon \skein{\Sigma,J}$
of presented by $r$.
By equation (\ref{equation_kouten_sa}),
$\kukakko{\cdot} :\pi(\Sigma,*_1,*_2) \to
\skein{\Sigma,J}/ \ker \epsilon \skein{\Sigma,J}$
is well-defined.
We also denoted $\kukakko{\cdot}:\Q \pi (\Sigma,*_1,*_2)
\to \skein{\Sigma,J}/\ker \epsilon \skein{\Sigma,J}$ 
by its $\Q$-linear extension.
By the skein relation of $\skein{\Sigma,J}$, we have
\begin{equation*}
2\kukakko{yr} =\kukakko{yxr}+\kukakko{yx^{-1} r}
\end{equation*}
for $x,y \in \pi_1 (\Sigma,*_1)$ and 
$r \in \pi (\Sigma,*_1,*_2)$. 
Using this equation, 
we have the following calculations.

\begin{lemm}
\label{lemm_module_calculation}
For $x,y,z \in \pi_1 (\Sigma,*_1)$ and 
$r \in \pi (\Sigma,*_1,*_2)$,
we have
\begin{align*}
&\kukakko{(x-1)(y-1)r}=-\kukakko{(y-1)(x-1)r}, \\
&\kukakko{(x-1)(y-1)(z-1)r}=0.
\end{align*}
\end{lemm}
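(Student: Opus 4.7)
The plan is to deduce both identities purely from the insertion relation $2\kukakko{yr} = \kukakko{yxr} + \kukakko{yx^{-1}r}$ stated just above the lemma, together with the $\Q$-linear extension of $\kukakko{\cdot}$ from $\pi(\Sigma, *_1, *_2)$ to $\Q\pi(\Sigma, *_1, *_2)$. The $y = 1$ specialization will play a central role: for any $w \in \pi_1(\Sigma, *_1)$ one has $\kukakko{wr} + \kukakko{w^{-1}r} = 2\kukakko{r}$.

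For the first identity, I would expand
\begin{equation*}
\kukakko{(x-1)(y-1)r} + \kukakko{(y-1)(x-1)r} = \kukakko{xyr} + \kukakko{yxr} - 2\kukakko{xr} - 2\kukakko{yr} + 2\kukakko{r},
\end{equation*}
and apply the insertion relation twice (with inserted element $x$, then $y$) to rewrite $\kukakko{xyr} = 2\kukakko{yr} - \kukakko{x^{-1}yr}$ and $\kukakko{yxr} = 2\kukakko{xr} - \kukakko{y^{-1}xr}$. After cancellation only $2\kukakko{r} - \kukakko{x^{-1}yr} - \kukakko{y^{-1}xr}$ survives; this vanishes by one more application of the insertion relation, now with inserted element $x^{-1}y \in \pi_1(\Sigma, *_1)$, which yields $\kukakko{x^{-1}yr} + \kukakko{y^{-1}xr} = 2\kukakko{r}$.

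For the second identity, the first identity is now available as a $\Q$-linear rewrite rule: swapping the two leftmost $(a-1)$-factors inside $\kukakko{\cdot}$ introduces a sign, for any suffix in $\Q\pi(\Sigma, *_1, *_2)$. The key observation is the algebraic identity $xy - 1 = (x-1) + (y-1) + (x-1)(y-1)$ in $\Q\pi_1(\Sigma, *_1)$, equivalently $(x-1)(y-1) = (xy - 1) - (x-1) - (y-1)$; multiplying on the right by $(z-1)$ gives
\begin{equation*}
(x-1)(y-1)(z-1) = (xy-1)(z-1) - (x-1)(z-1) - (y-1)(z-1).
\end{equation*}
I would apply the first identity to each of the three terms on the right to bring $(z-1)$ in front, and then re-expand $(z-1)(xy - 1)$ by the same algebraic identity. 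All terms except one cancel, and one arrives at $\kukakko{(x-1)(y-1)(z-1)r} = -\kukakko{(z-1)(x-1)(y-1)r}$. Writing $f(x,y,z)$ for the left-hand side, this is the cyclic antisymmetry $f(x,y,z) = -f(z,x,y)$; iterating three times forces $f(x,y,z) = -f(x,y,z)$, so $f(x,y,z) = 0$ in characteristic zero.

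The main obstacle is spotting the decomposition $xy - 1 = (x-1) + (y-1) + (x-1)(y-1)$ in the second identity's argument: it is what converts the plain antisymmetry coming from the first identity into a cyclic antisymmetry on triples, which is inconsistent with itself and forces vanishing. Once this is in hand, the rest is short algebraic manipulation using only the first identity and $\Q$-bilinearity of $\kukakko{\cdot}$.
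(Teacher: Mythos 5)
Your proposal is correct and takes essentially the same route as the paper's proof: both identities rest on the insertion relation $2\kukakko{ws}+0=\kukakko{ws}+\kukakko{ws}$... more precisely on $2\kukakko{s}=\kukakko{ws}+\kukakko{w^{-1}s}$, and your signed cyclic shift $f(x,y,z)=-f(z,x,y)$ is the same move as the first step of the paper's chain $\kukakko{(x-1)(y-1)(z-1)r}=-\kukakko{(y-1)(z-1)(x-1)r}$, namely anticommuting a single factor past the block formed by the other two, with your decomposition $(x-1)(y-1)=(xy-1)-(x-1)-(y-1)$ simply making explicit the linearity argument the paper leaves implicit. For the first identity the difference is only cosmetic: you show the sum of the two sides vanishes, while the paper rewrites one side into the other by successive insertions.
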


\begin{proof}
We have
\begin{align*}
&\kukakko{(x-1)(y-1)r} \\
&= \kukakko{-\gyaku{y}\gyaku{x}r+2r-xr-yr+r} \\
&=\kukakko{\gyaku{y}xr-2\gyaku{y}r+2r-xr-yr+r} \\
&=\kukakko{-yxr+2xr+2yr-4r+2r-xr-yr+r} \\
&=-\kukakko{(y-1)(x-1)r}. \\
\end{align*}
Using this equation, we have
\begin{align*}
&\kukakko{(x-1)(y-1)(z-1)r} \\
&=-\kukakko{(y-1)(z-1)(x-1)r} \\
&=\kukakko{(y-1)(x-1)(z-1)r} \\
&=-\kukakko{(x-1)(y-1)(z-1)r}.
\end{align*}
This proves the lemma.
\end{proof}

We consider the action $\sigma_\pi :\Q \pi_\square \times
\Q \pi (\Sigma,*_1,*_2) \to \Q \pi (\Sigma,*_1,*_2)$ defined by
\begin{equation*}
\sigma_\pi(\zettaiti{x}_\square)(r) \defeq 
\sum_{p \in x \cap r} \epsilon(p,x,r)
(r_{*_1 p}x_p r_{p *_2}-r_{*_1 p}(x_p)^{-1} r_{p *_2})
\end{equation*}
for $\zettaiti{x} \in \pi_\square $ and $r \in \pi (\Sigma,*_1, *_2)$ in general position.
For details, see \cite{Kawazumi} Definition 3.2.1.

Since $\sigma( (\ker \epsilon)^2)(\skein{\Sigma,J}) \subset
\ker \epsilon \skein{\Sigma,J}$ and
$ \sigma ( \skein{\Sigma})( \ker \epsilon \skein{\Sigma,J})
\subset \ker \epsilon \skein{\Sigma,J}$,
the action $\sigma :\skein{\Sigma} \times
\skein{\Sigma,J} \to \skein{\Sigma}$ induces
$\sigma : \skein{\Sigma}/ (\ker \epsilon)^2 \times
\skein{\Sigma,J} / \ker \epsilon \skein{\Sigma,J}
\to \skein{\Sigma,J} / \ker \epsilon
\skein{\Sigma,J}$.
By equation (\ref{equation_kouten_sa}),
we have 
\begin{equation*}
\sigma (\kukakko{x})(\kukakko{r})
=-\kukakko{\sigma_\pi (\zettaiti{x}_\square)(r)}.
\end{equation*}
Using this equation, it is enough to show the following
in order to prove Lemma \ref{lemm_bch_aut_jouken}.

\begin{prop}
Let $V_1$ and $V_2 \subset V_1$ be $\Q$-linear subspaces of $H$
satisfying $\mu (v,v') =0$ for any $v \in V_2 $ and $v' \in V_1$.
We denote
\begin{equation*}
S = \shuugou{x  \in \zettaiti{(\ker \epsilon)^2}|
C(2)(x \mod \zettaiti{(\ker \epsilon)^3}) \in
c(V_1 \otimes V_2)}.
\end{equation*}
Then we have the following.
\begin{enumerate}
\item We have $\sigma(s) (\Q \pi_1 (\Sigma,*,*')) \subset
\ker \epsilon_\pi \pi_1 (\Sigma, *,*')$
for any $s \in S$.
\item We have $\sigma (s_1) \circ \sigma (s_2) \circ \sigma
(s_3) (\ker \epsilon_\pi \pi_1 (\Sigma, *,*'))
\subset (\ker \epsilon_\pi)^2 \pi_1 (\Sigma, *,*')$
for any $s_1,s_2,s_3 \in S$.
\item We have  $\sigma (s_1) \circ \sigma (s_2) \circ 
\sigma (s_3) \circ \sigma (s_4) \circ \sigma
(s_5) ((\ker \epsilon_\pi)^2 \pi_1 (\Sigma, *,*'))
\subset (\ker \epsilon_\pi)^3 \pi_1 (\Sigma, *,*')$
for any $s_1,s_2,s_3,s_4,s_5 \in S$.
\end{enumerate}
\end{prop}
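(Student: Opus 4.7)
The plan is to reduce each part to a graded computation on $H_1^{\otimes m} \cong (\ker \epsilon_\pi)^m \Q \pi_1(\Sigma,*,*') / (\ker \epsilon_\pi)^{m+1} \Q \pi_1(\Sigma,*,*')$ via Proposition \ref{prop_Goldman_Lie_filtration}. Part (1) is in fact immediate from the second statement of Proposition \ref{prop_Goldman_bracket} with $n = 2$, since $S \subset \zettaiti{(\ker \epsilon_\pi)^2}$.

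For (2) and (3), I would introduce, for each $m \geq 0$, a decreasing filtration $\shuugou{W(m)_j}_{j \geq 0}$ of $H_1^{\otimes m}$ by
\begin{equation*}
W(m)_j \defeq \sum_{(\epsilon_1,\ldots,\epsilon_m) \in \shuugou{0,1,2}^m,\ \sum_i \epsilon_i \geq j} V_{\epsilon_1} \otimes \cdots \otimes V_{\epsilon_m},
\end{equation*}
with the convention $V_0 \defeq H_1$, so that $W(m)_0 = H_1^{\otimes m}$ and $W(m)_{2m+1} = 0$. The key claim to verify is
\begin{equation*}
\sigma'_{\pi,2,m}(c(V_1 \otimes V_2))(W(m)_j) \subset W(m)_{j+1}
\end{equation*}
for every $m, j \geq 0$. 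This is direct from the explicit formula in Proposition \ref{prop_Goldman_Lie_filtration}: writing $\bar s = c(a_1 \otimes a_2)$ with $a_1 \in V_1$, $a_2 \in V_2$ and $b_1 \otimes \cdots \otimes b_m$ with $b_i \in V_{\epsilon_i}$, the resulting expression is a sum of terms in which some $b_j$ is either replaced by $a_2$ with weight $\mu(a_1,b_j)$, or by $a_1$ with weight $\mu(a_2,b_j)$. Since $\mu(V_1,V_2) = 0$, the weight $\mu(a_2,b_j)$ vanishes unless $\epsilon_j = 0$, in which case replacing $b_j$ by $a_1 \in V_1$ raises $\epsilon_j$ from $0$ to $1$; likewise $\mu(a_1,b_j)$ vanishes when $\epsilon_j = 2$, and in the surviving terms the replacement $b_j \mapsto a_2 \in V_2$ raises $\epsilon_j$ from $0$ or $1$ to $2$. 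Either way $\sum_i \epsilon_i$ strictly increases.

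Granting the claim, the argument concludes as follows. By Proposition \ref{prop_Goldman_bracket}, every $\sigma(s_i)$ preserves $(\ker \epsilon_\pi)^m \Q \pi_1(\Sigma,*,*')$, and by Proposition \ref{prop_Goldman_Lie_filtration} its induced map on the graded quotient $\cong H_1^{\otimes m}$ coincides with $\sigma'_{\pi,2,m}(\bar s_i)$, where $\bar s_i$ is the leading class of $s_i$ in $c(V_1 \otimes V_2)$. Iterating the key claim, after $2m+1$ applications the image of the graded representative lies in $W(m)_{2m+1} = 0$, so $\sigma(s_1) \circ \cdots \circ \sigma(s_{2m+1})(r) \in (\ker \epsilon_\pi)^{m+1} \Q \pi_1(\Sigma,*,*')$ for every $r \in (\ker \epsilon_\pi)^m \Q \pi_1(\Sigma,*,*')$. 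Taking $m = 1$ yields (2), and $m = 2$ yields (3). The main obstacle is the bookkeeping in the key claim; once the index-raising filtration $\shuugou{W(m)_j}$ is set up correctly, the essential input is just the orthogonality $\mu(V_1, V_2) = 0$, which forces every replacement produced by the formula to strictly raise $\sum_i \epsilon_i$.
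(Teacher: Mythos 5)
Your proposal is correct and follows essentially the same route as the paper: part (1) from Proposition \ref{prop_Goldman_bracket} with $n=2$, and parts (2), (3) by passing to the graded quotients $H_1^{\otimes m}$ and iterating an index-raising filtration whose strict increase is forced by $\mu(V_1,V_2)=0$. Your $W(m)_j$ is just a uniform packaging of the paper's $V_i$ (for $m=1$) and $V(2)_j$ (for $m=2$), so the two arguments coincide.
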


\begin{proof}
The first claim is obvious.
Let $V_0$ be $H_1$. We have $
\sigma'_{\pi,2,1} (c(2)(s))(V_i) \subset V_{i+1}$ for $i=0,1$
for any $s \in S$.
Furthermore, we have
$\sigma' (c(2)(s))(V_2) =0$ for any $s in S$.
This proves the second claim.
We define
\begin{equation*}
V(2)_j = \sum_{j_1+j_2=j}c(V_{j_1} \otimes V_{j_2})
\end{equation*}
for $j=0,1,2,3,4$.
We have
$\sigma' _{\pi,2,2} (c(2)(s))(V(2)_i) \subset V(2)_{i+1}$
for $i=0,1,2,3$.
Furthermore, we have 
$\sigma'_{\pi,2,2} (c(2)(s))(V(2)_4) =0$.
This proves the third claim.
\end{proof}

\section{Framed pure braid group}
\label{section_pure_braid_group}
In this section, let $\Sigma$ be a compact connected surface
of genus $0$ and $b+1$ boundary components.
We remark the mapping class group $\mathcal{M} (\Sigma)$ of $\Sigma$
is isomorphic to the famed pure braid group with $b$ strings.

The completed Kauffman bracket skein module $\widehat{\skein{\Sigma}}$
has a filtration $\filtn{F^n \widehat{\skein{\Sigma}}}$
satisfying $\skein{\Sigma}/F^n \skein{\Sigma}\simeq \widehat{
\skein{\Sigma}}/F^n \widehat{\skein{\Sigma}}$ for
$n \in \Z_{\geq 0}$.

\subsection{The Baker-Campbell-Hausdorff series}
By Corollary \ref{cor_bracket_filtration}, we have the following.

\begin{lemm}
\label{lemm_filt_pure}
We have
\begin{align*}
[\widehat{\skein{\Sigma}},F^n \widehat{\skein{\Sigma}}] \subset
F^{n+1} \widehat{\skein{\Sigma}}.
\end{align*}
\end{lemm}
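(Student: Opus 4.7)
The plan is to use the genus-zero vanishing of the intersection form together with Corollary \ref{cor_bracket_filtration} to upgrade the bracket estimates at the lowest two levels, and then propagate the improvement by the Leibniz rule and induction.

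First, observe that for $\Sigma$ of genus $0$ with $b+1$ boundary components, the intersection form $\mu$ on $H_1(\Sigma,\Q)$ vanishes identically: $H_1$ is spanned by loops parallel to $b$ of the boundary circles, and any pair of such representatives can be realized as disjoint simple closed curves on the planar surface $\Sigma$. Substituting $\mu\equiv 0$ into Corollary \ref{cor_bracket_filtration} shows that the induced graded brackets
\begin{equation*}
F^{2}\skein{\Sigma}/F^{3}\skein{\Sigma}\otimes F^{2}\skein{\Sigma}/F^{3}\skein{\Sigma}\longrightarrow F^{2}\skein{\Sigma}/F^{3}\skein{\Sigma}
\end{equation*}
and
\begin{equation*}
F^{2}\skein{\Sigma}/F^{3}\skein{\Sigma}\otimes F^{3}\skein{\Sigma}/F^{4}\skein{\Sigma}\longrightarrow F^{3}\skein{\Sigma}/F^{4}\skein{\Sigma}
\end{equation*}
are both zero, since each term in Corollary \ref{cor_bracket_filtration} carries a factor $\mu(\alpha_{i_1},\beta_{j_1})=0$. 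Combined with the general inclusion $[F^{n}\skein{\Sigma},F^{m}\skein{\Sigma}]\subset F^{n+m-2}\skein{\Sigma}$ from Proposition \ref{prop_bracket_filtration}, this yields
\begin{equation*}
[F^{2}\skein{\Sigma},F^{2}\skein{\Sigma}]\subset F^{3}\skein{\Sigma},\qquad [F^{2}\skein{\Sigma},F^{3}\skein{\Sigma}]\subset F^{4}\skein{\Sigma}.
\end{equation*}

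Second, I would reduce the claim to the uncompleted setting. The augmentation extends continuously to $\hat\epsilon:\widehat{\skein{\Sigma}}\to\Q$, giving a splitting $\widehat{\skein{\Sigma}}=\Q\cdot 1\oplus\widehat{F^{2}\skein{\Sigma}}$; since $\Q\cdot 1$ is central the bracket satisfies $[\widehat{\skein{\Sigma}},F^{n}\widehat{\skein{\Sigma}}]=[\widehat{F^{2}\skein{\Sigma}},F^{n}\widehat{\skein{\Sigma}}]$. By the filtration-compatible identification $\skein{\Sigma}/F^{n}\skein{\Sigma}\simeq\widehat{\skein{\Sigma}}/F^{n}\widehat{\skein{\Sigma}}$ and continuity of $[\ ,\ ]$, it is enough to prove $[F^{2}\skein{\Sigma},F^{n}\skein{\Sigma}]\subset F^{n+1}\skein{\Sigma}$ for every $n\geq 0$.

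Third, I induct on $n$. The cases $n=0,1$ reduce to $[\skein{\Sigma},\skein{\Sigma}]\subset\ker\epsilon=F^{2}\skein{\Sigma}$, which holds because $\epsilon$ is an algebra homomorphism. The cases $n=2,3$ are the two displayed inclusions. For $n\geq 4$, use $F^{n}\skein{\Sigma}=F^{2}\skein{\Sigma}\cdot F^{n-2}\skein{\Sigma}$ together with the Leibniz rule
\begin{equation*}
[x,yz]=[x,y]z+y[x,z],
\end{equation*}
which is a direct consequence of $[x,y]=\frac{1}{-A+\gyaku{A}}(xy-yx)$. For $x,y\in F^{2}\skein{\Sigma}$ and $z\in F^{n-2}\skein{\Sigma}$, the first term lies in $F^{3}\skein{\Sigma}\cdot F^{n-2}\skein{\Sigma}\subset F^{n+1}\skein{\Sigma}$ by the base case and Proposition \ref{prop_product_filtration}, while the second lies in $F^{2}\skein{\Sigma}\cdot F^{n-1}\skein{\Sigma}\subset F^{n+1}\skein{\Sigma}$ by the induction hypothesis and the same proposition.

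The main obstacle is really the pair of base cases $n=2,3$: the Leibniz induction is routine, but without the genus-zero vanishing of $\mu$ only Proposition \ref{prop_bracket_filtration}'s weaker $[F^{n}\skein{\Sigma},F^{m}\skein{\Sigma}]\subset F^{n+m-2}\skein{\Sigma}$ would survive, and the sharper estimate would fail. In other words, the content of the lemma is precisely the one-step improvement in filtration degree gained from the triviality of the intersection form on a planar surface.
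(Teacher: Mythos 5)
Your argument is correct and is essentially the paper's own: the paper proves this lemma with the single citation ``By Corollary \ref{cor_bracket_filtration}'', and the substance you supply --- $\mu\equiv 0$ on a planar surface kills the graded brackets in degrees $(2,2)$ and $(2,3)$, after which centrality of $\Q\cdot 1$, the Leibniz rule, $F^n\skein{\Sigma}=\ker\epsilon\, F^{n-2}\skein{\Sigma}$ and Propositions \ref{prop_product_filtration} and \ref{prop_bracket_filtration} finish the induction --- is exactly the intended reading. One small caveat: your justification of the cases $n=0,1$ by ``$\epsilon$ is an algebra homomorphism'' is not by itself sufficient, since the bracket involves division by $-A+\gyaku{A}\in\ker\epsilon$; but these cases are anyway subsumed by your reduction $[\widehat{\skein{\Sigma}},\,\cdot\,]=[F^2\widehat{\skein{\Sigma}},\,\cdot\,]$ applied to both slots together with the $n=2$ base case.
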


In this paper, we define the Baker-Campbell-Hausdorff series
$\bch$ by
\begin{equation*}
\bch (a_1,a_2, \cdots, a_m) \defeq (-A+\gyaku{A})\log (\prod_{i=1}^m \exp (\frac{a_i}{
-A+\gyaku{A}}))
\end{equation*}
for $a_1, a_2, \cdots, a_{m} \in
\widehat{\skein{\Sigma}}$.
As elements of  the associated Lie algebra $(\widehat{\skein{\Sigma}}, [ \ \ , \ \ ])$,
it has a usual expression.
For example, 
\begin{equation*}
\bch(x,y) = x+y+\frac{1}{2}[x,y]+\frac{1}{12}([x,[x,y]]+[y,[y,x]])+ \cdots.
\end{equation*}
By Lemma \ref{lemm_filt_pure}, the Baker-Campbell-Hausdorff series is well-defined.
The Baker-Campbell-Hausdorff series satisfies
\begin{align*}
& \bch(a,-a) =0, \\
& \bch(0,a)=\bch(a,0) =a, \\
& \bch(a,\bch(b,c))=\bch(\bch(a,b),c), \\
& \bch(a, b, -a) =\exp(\sigma(a))(b).
\end{align*}
Hence $(\widehat{\skein{\Sigma}}, \bch) $ is a group whose identity is $0$.
By Proposition \ref{prop_bch_aut_jouken}, we define
$\exp (\sigma(s))( \cdot) \in \Aut (\widehat{\skein{\Sigma,J}})$
is well-defined for any $s \in \widehat{\skein{\Sigma}}$ and
any finite subset $J \subset \partial \Sigma$.
Furthermore,
$\exp :(\widehat{\skein{\Sigma}},\bch) \to \Aut (\widehat{\skein{\Sigma,J}})$
is a group homomorphism, i.e, $\exp(\sigma(\bch(a,b))) =\exp(\sigma(a)) \circ \exp
(\sigma(b))$ for $a,b \in \widehat{\skein{\Sigma}}$.

The Baker-Campbell-Hausdorff series $\bch$ is defined by
\begin{equation*}
\bch (a_1,a_2, \cdots, a_m) \defeq (-A+\gyaku{A})\log (\prod_{i=1}^m \exp (\frac{a_i}{
-A+\gyaku{A}}))
\end{equation*}
for $a_1, a_2, \cdots, a_{m} \in
\widehat{\skein{\Sigma}}$. For example, 
\begin{equation*}
\bch(x,y) = x+y+\frac{1}{2}[x,y]+\frac{1}{12}([x,[x,y]]+[y,[y,x]])+ \cdots.
\end{equation*}
The Baker-Campbell-Hausdorff series satisfies
\begin{align*}
& \bch(a,-a) =0, \\
& \bch(0,a)=\bch(a,0) =a, \\
& \bch(a,\bch(b,c))=\bch(\bch(a,b),c), \\
& \bch(a, b, -a) =\exp(\sigma(a))(b).
\end{align*}
Hence $(\widehat{\skein{\Sigma}}, \bch) $ is a group whose identity is $0$.
By Proposition \ref{prop_bch_aut_jouken}, 
$\exp (\sigma(s))( \cdot) \in \Aut (\widehat{\skein{\Sigma,J}})$
is well-defined for any $s \in \widehat{\skein{\Sigma}}$ and
any finite subset $J \subset \partial \Sigma$.
Furthermore,
$\exp :(\widehat{\skein{\Sigma}},\bch) \to \Aut (\widehat{\skein{\Sigma,J}})$
is a group homomorphism, i.e, $\exp(\sigma(\bch(a,b))) =\exp(\sigma(a)) \circ \exp
(\sigma(b))$ for $a,b \in \widehat{\skein{\Sigma}}$.

\subsection{The group homomorphism $\zeta :\mathcal{M} (\Sigma) 
\to (\widehat{\skein{\Sigma}}, \bch)$}

The mapping class group $\mathcal{M} (\Sigma)$ is generated by
$\shuugou{t_{ij}|1 \leq i<j \leq b} \cup \shuugou{t_i|1 \leq i \leq b}$,
where $t_{ij} \defeq t_{c_{ij}}$ and $t_i \defeq t_{c_i}$.
Furthermore $\mathcal{M} (\Sigma)$ is presented by the relations
\begin{align*}
&\ad (t_i)(t_j) =t_j, \\
&\ad (t_s)(t_{ij}) =t_{ij}, \\
&\ad(t_{rs})(t_{ij}) =t_{ij} & \mathrm{if\ \ } r<s<i<j, \\
&\ad(t_{rs})(t_{ij}) =t_{ij} & \mathrm{if\ \ } i<r<s<j, \\
&\ad(t_{rs}t_{rj})(t_{ij}) =t_{ij} & \mathrm{if\ \ } r<s=i<j, \\
&\ad(t_{rs}t_{ij}t_{sj})(t_{ij}) =t_{ij} & \mathrm{if\ \ } i=r<s<j, \\
&\ad(t_{rs}t_{rj}t_{sj}\gyaku{t_{rj}} \gyaku{t_{sj}})(t_{ij}) =t_{ij} & \mathrm{if\ \ } r<i<s<j, \\ \end{align*}
where $\ad (a)(b) \defeq a b \gyaku{a}$.
See, for example, \cite{Birman} p.20 Lemma 1.8.2.

\begin{df}
The group homomorphism $\zeta : \mathcal{M} (\Sigma) \to
(\widehat{\skein{\Sigma}},\bch)$ is defined by
$t_i \mapsto L(c_i)$ and $t_{ij} \mapsto L(c_{ij})$,
where $L(c) \defeq \frac{-A+\gyaku{A}}{4 \log (-A)}
(\arccosh (-\frac{c}{2}))^2-(-A+\gyaku{A}) \log (-A)$.
\end{df}

\begin{thm}
\label{thm_zeta_pure_braid}
The group homomorphism $\zeta$ is well-defined and injective.
\end{thm}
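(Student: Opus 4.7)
The plan is to define $\zeta$ by first $\bch$-extending the prescription $t_i \mapsto L(c_i)$, $t_{ij} \mapsto L(c_{ij})$ to a map from the free group $F$ on $\{t_i, t_{ij}\}$ into $(\widehat{\skein{\Sigma}}, \bch)$. Since each $L(c)$ lies in $\ker \epsilon = F^2 \widehat{\skein{\Sigma}}$ and Lemma \ref{lemm_filt_pure} forces iterated brackets into ever-deeper filtration layers, every BCH series converges in the completion, so this free-group map is well-defined. The substantive content is that it descends to $\mathcal{M}(\Sigma)$; injectivity will then follow from the faithful MCG action on $\widehat{\skein{\Sigma, J}}$.

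For the descent, I would verify each defining relation in Birman's presentation. Every one of them takes the form $\ad(w)(g) = g$ with $g \in \{t_i, t_{ij}\}$ a generator and $w$ a prescribed word, so the task reduces to the single equation
\[
\bch(\zeta(w), L(c_g), -\zeta(w)) = L(c_g) \quad \text{in } \widehat{\skein{\Sigma}}.
\]
By the formal identity $\bch(a, b, -a) = \exp(\sigma(a))(b)$ stated in the excerpt, this is equivalent to $\exp(\sigma(\zeta(w)))(L(c_g)) = L(c_g)$.

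The geometric input I would use is the following. By Theorem \ref{thm_Dehn_twist} each Dehn twist $t_c$ acts on $\widehat{\skein{\Sigma}}$ as $\exp(\sigma(L(c)))$, and by the compatibility $\exp(\sigma(\bch(a, b))) = \exp(\sigma(a)) \circ \exp(\sigma(b))$ the operator $\exp(\sigma(\zeta(w)))$ coincides with the natural action of the mapping class $w$ on $\widehat{\skein{\Sigma}}$. The relation $\ad(w)(t_{c_g}) = t_{c_g}$ holding in $\mathcal{M}(\Sigma)$ is equivalent to $t_{w(c_g)} = t_{c_g}$, and hence to $w(c_g) = c_g$ as an unoriented simple closed curve. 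Since $L(c_g)$ is a fixed power series in $c_g$, the automorphism $w$ therefore fixes $L(c_g)$, establishing the required equation.

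For injectivity, choose a finite subset $J \subset \partial \Sigma$ meeting every boundary component, which is possible since $\partial \Sigma \neq \emptyset$. By Corollary \ref{cor_map_inj} the natural action $\mathcal{M}(\Sigma) \to \Aut(\widehat{\skein{\Sigma, J}})$ is injective. By the same compatibility used above, $\xi \in \mathcal{M}(\Sigma)$ acts on $\widehat{\skein{\Sigma, J}}$ as $\exp(\sigma(\zeta(\xi)))$; if $\zeta(\xi) = 0$ then this action is the identity, forcing $\xi = 1$. The hardest step is the passage from the MCG-level fact $w(c_g) = c_g$ to the BCH-level identity $\bch(\zeta(w), L(c_g), -\zeta(w)) = L(c_g)$, which is carried out via the identity $\bch(a, b, -a) = \exp(\sigma(a))(b)$ combined with Theorem \ref{thm_Dehn_twist}; once this translation is in place, every Birman relation becomes transparent.
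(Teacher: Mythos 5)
Your proposal is correct and follows essentially the same route as the paper: reduce each Birman relation to the identity $\bch(\zeta(w),L(c_g),-\zeta(w))=L(c_g)$, convert this via $\bch(a,b,-a)=\exp(\sigma(a))(b)$ and Theorem \ref{thm_Dehn_twist} into the statement that the mapping class $w$ fixes $L(c_g)$ (which holds because $w$ fixes the curve $c_g$), and deduce injectivity from Corollary \ref{cor_map_inj} together with $\xi(\cdot)=\exp(\sigma(\zeta(\xi)))(\cdot)$. The only difference is expository: you make the free-group/descent step and the equivalence $\ad(w)(t_{c_g})=t_{c_g}\Leftrightarrow w(c_g)=c_g$ explicit, which the paper leaves implicit.
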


\begin{proof}
Let $c, c'_1, c'_2, \cdots ,c'_k$ be elements of $\shuugou{c_i,c_{ij}}$ in $\Sigma$
and $\epsilon_1, \cdots,  \epsilon_k $ be elements of $\shuugou{\pm 1}$
satisfying $t_{c'_1}^{\epsilon_1} t_{c'_2}^{\epsilon_2}
\cdots t_{c'_k}^{\epsilon_k} (c) =c$.
It is enough to check 
\begin{equation*}
\bch(\epsilon_1 L(c'_1), \cdots,
\epsilon_2 L(c'_2), \cdots, \epsilon_k L(c'_k),L(c),
,-\epsilon_k L(c'_k), \cdots, -\epsilon_1 L(c'_1)) =L(c).
\end{equation*}
By Theorem \ref{thm_Dehn_twist}, we have
\begin{align*}
&\bch(\epsilon_1 L(c'_1), \cdots,
\epsilon_2 L(c'_2), \cdots, \epsilon_k L(c'_k),L(c),
,-\epsilon_k L(c'_k), \cdots, -\epsilon_1 L(c'_1)) \\
&=\exp(\sigma(\epsilon_1 L(c'_1))) \circ \cdots \circ \exp(\sigma(\epsilon_k 
L(c'_k)))(L(c)) \\
&=t_{c'_1}^{\epsilon_1} t_{c'_2}^{\epsilon_2}
\cdots t_{c'_k}^{\epsilon_k} (L(c)) =L(c).
\end{align*}
This finishes the proof of well-definedness of $\zeta$.

By definition of $\bch$, we have $\xi (\cdot) = \exp (\sigma(\zeta(\xi)))
(\cdot): \widehat{\skein{\Sigma,J}} \to \widehat{\skein{\Sigma,J}}$
for any $\xi  \in \mathcal{M} (\Sigma)$ and  any finite subset $J$ of 
$\partial \Sigma$.
Using Corollary \ref{cor_map_inj}, we have
$\xi =\id_\Sigma$ if and only if $\zeta(\xi) =0$.
This finishes the proof of injectivity of $\zeta$.
\end{proof}

\begin{rem}
Using the lantern relation 
\begin{equation*}
\label{equaton_lantern}
\bch (L(c_{123}),-L(c_{12}),-L(c_{23}),-L(c_{13}),
L(c_1),L(c_2),L(c_3))=0,
\end{equation*}
we have
\begin{equation*}
\zeta (t_c) =L(c)
\end{equation*}
for any simple closed curve $c$ in $\Sigma_{0,b+1}$.
Details will appear in \cite{TsujiTorelli}.
\end{rem}

\end{document}